\theoremstyle{plain}
\numberwithin{equation}{section}
\newtheorem{thm}{Theorem}[section]
\newtheorem{cor}[thm]{Corollary}
\newtheorem{lem}[thm]{Lemma}
\newtheorem{prop}[thm]{Proposition}
\newtheorem{conj}[thm]{Conjecture}
\theoremstyle{definition}
\newtheorem{df}[thm]{Definition}
\newtheorem{exe}[thm]{Example}
\newtheorem{rmk}[thm]{Remark}
\newcommand{\mb}{\mathbb}
\newcommand{\mf}{\mathfrak}
\newcommand{\ml}{\mathcal}
\newcommand{\Spt}{\mathbf{S}\mathbf{p}\mathbf{t}} %%% symmetric spectra
\newcommand{\Sym}{\Sigma}
\newcommand{\Sm}{\mathbf{Sm}_k}   %%% smooth varieties
\newcommand{\Shk}{\mathcal{S}h^{tr}_{Nis}(k)}
\newcommand{\R}{\mathbf{R}\mathbf{e}\mathbf{p}_{GL_2}}
\newcommand{\ot}{\otimes^{tr}}
\newcommand{\CM}{\mathcal{CM}}
\newcommand{\KCM}{\mathcal{KCM}}
\newcommand{\Hom}{\underline{RHom}}
\newcommand{\V}{\mathbf{F}}
\newcommand{\PST}{\mathbf{PST}}
\newcommand{\Ztr}{\mb{Z}_{tr}}
\newcommand{\DM}{\mathbf{DM}}
\newcommand{\DMEM}{\mathbf{DMEM}}
\newcommand{\MEM}{\mathbf{MEM}}
\newcommand{\Ab}{\mathbf{Ab}}
\newcounter{elno}
\begin{document}
\author{Jin Cao}
\title{Motives for an elliptic curve}

\maketitle

%%%%%abstract
\begin{abstract}
In this paper we describe the rigid tensor triangulated subcategory of Voevodsky's triangulated category of motives generated by the motive of an elliptic curve as a derived category of dg modules over a commutative differential graded algebra in the category of representations over some reductive group.
\end{abstract}

%MSC
%\noindent{Mathematics Subject Classification (2010)}: 

%keywords
%\noindent{Keywords}: 

%\setcounter{tocdepth}{1}
%\tableofcontents

\section{Introduction}
During 1980s, Beilinson and Deligne independently describe a conjectural abelian tensor category of mixed motives $\mathbf{MM}(k, \mb{Q})$ over a given base field $k$. The existence of an abelian category of mixed motives would have important consequences for our understanding of smooth varieties. The category $\mathbf{MM}(k, \mb{Q})$ has yet to be constructed.  Alternatively, Voevodsky, Levine and Hanamura have independently constructed a triangulated category of mixed motives over a field, modeled on the derived category of the conjectural abelian category of mixed motives. Notably, Voevodsky's triangulated category of mixed motives satisfies most properties predicted by Beilinson. Then one may ask whether there is a reasonable t-structure on Voevodsky's triangulated category of motives with rational coefficient $\DM_{gm}(k, \mb{Q})$, which gives the desired abelian category of mixed motives. The only known example is mixed Tate motives \cite{L} (short for MTMs), i.e. the category of motives generated by Tate objects. In fact, if the base field $k$ satisfies the Beilinson-Soul\'e vanishing conjecture, Levine \cite{L} shows that the triangulated category of MTMs has a t-structure. 
\

Later Bloch and Kriz \cite{BK} provide a different way of constructing an abelian category of MTMs. Roughly speaking, the conjectured abelian category of mixed Tate motives $\mathbf{MTM}$ is a Tannakian category, whose Tannakian fundamental group $\pi_1(\mathbf{MTM})$ is an extension of a prounipotent algebraic group $U$ by the multiplicative group $\mb{G}_m$. Bloch and Kriz's work gives a description about one candidate of the prounipotent group $U$. This group has an explicit description in term of ''cycle algebras", therefore Bloch and Kriz's MTMs is defined as the category of graded representations over $U$. Then a natural question is: 
\

\textit{Does Bloch and Kriz's construction coincide with Levine's construction if the base field satisfies the Beilinson-Soul\'e vanishing conjecture? Or what's the relation between Bloch and Kriz's construction and Voevodsky's construction?}
\

Combining Bloch and Kriz's construction with Kriz and May's general theory of Adams cdgas \cite{KM},  Spitzweck \cite{S} defines an equivalence between the dervied category of Adams dg-modules over BK's cycle algebra and the rigid subcategory of $\DM_{gm}(k, \mb{Q})$ generated by Tate objects. As a corollary, if the Beilinson-Soul\'e vanishing conjecture is true for the base fields, all mentioned constructions of the abelian category of MTMs are the same.
\

In this paper, we continue with the viewpoint of cycle algebras to understand the motives generated by an elliptic curve $E$ defined over a base field $k$ \textbf{with characteristic zero}. We handle the case that the elliptic curve is without complex multiplication and with complex multiplication separately.  Like Tannakian fundamental group of mixed Tate motives, the conjectured Tannakian fundamental group for motives generated by a non-CM (resp. CM \footnote{For the CM case, we only consider the complex multiplication is defined over $k$.}) elliptic curve is an extension of a pro-unipotent algebraic group by $GL_2$ (resp. the Weil restriction $\mathbf{Res}_{\mb{K}/\mb{Q}}\mb{G}_m$, where $\mb{K} = End(E) \otimes \mb{Q}$). The elliptic cycle algebra should lie in the category of representations over $GL_2$ (resp. $\mathbf{Res}_{\mb{K}/\mb{Q}}\mb{G}_m$). However, $\mathbf{Res}_{\mb{K}/\mb{Q}}\mb{G}_m$ is not absolutely irreducible, which causes a lot of difficulties. Our strategy for the CM case is extending the cycles algebra and representations over $\mb{K}$ rather than $\mb{Q}$ and using the isomorphism $\mathbf{Res}_{\mb{K}/\mb{Q}}\mb{G}_m \otimes \mb{K} \cong \mb{G}_{m, \mb{K}} \times \mb{G}_{m, \mb{K}}$. Studying the cdga object in the category of $GL_2$-representations  is one of the main motivation for us to develop a theory of cdgas over some reductive group in \cite{C}, which generalize Kriz and May's theory of Adams cdgas. Compared with mixed Tate motives, the left things are constructing a reasonable elliptic type cdga and further connecting with $\DM_{gm}(k, \mb{Q})$ (resp. $\DM_{gm}(k, \mb{K})$), which are the main contents of our paper.
\

Let us explain the rough idea of the construction of the elliptic cycle algebra for non-CM case. The CM case is similar. For the desired elliptic cycle algebra $\ml{E}_{ell}^*$, as a $GL_2$-representation, it will decompose as a direct sum of irreducible representations. Therefore we need to figure out coefficients for every irreducible $GL_2$-representation. Recall every irreducible $GL_2$ representation has the form $Sym^a \V \otimes det^{\otimes b}$ for $a \in \mb{Z}_{\geq 0}$ and $b \in \mb{Z}$, where $\V$ is the standard $GL_2$ representation. For $Sym^a \V \otimes det^{\otimes b}$, the cohomology of the coefficient complex should reflect the extension of the motives $Sym^a M_1(E) \otimes \mb{Q}(b)$ by $\mb{Q}$ (See Lemma \ref{fully-faithful}). Our computations in Section 5 implies the cohomology groups of our construction of $\ml{E}_{ell}^*$ have these properties. As for the coefficient complexes, we choose the Friendlander-Suslin construction (reviewed in Section \ref{FS}), which is a functorial improvement of Bloch's cycle complexes.
\

After constructing the elliptic cycle algebra $\ml{E}_{ell}^*$, we use the sheaf version  $\mf{E}_{ell}^*$ (Definition \ref{sheaf version}) of $\ml{E}_{ell}^*$ and symmetric motivic $T^{tr}$-spectra to define the functor from the derived category of dg-$\ml{E}_{ell}^*$ modules to Voevodsky's big category of motives. Restricting to compact objects, we get the desired equivalence, which is our main results -- Theorem \ref{equi} and Theorem \ref{equicm}.

There are other related constructions or understanding of motives for an elliptic curve.
\begin{itemize}
\item
Patashnick \cite{P} constructs a different cycle algebra for an elliptic curve $E$ without CM and defines one candidate for the abelian category of motives for $E$. Compared to his work, the advantage of our construction is its identification with a full subcategory of $\DM_{gm}(k, \mb{Q})$. Another difference between Patashnick's construction and ours is the use of Friedlander-Suslin complexes in our paper rather than Bloch's cycle complexes. 
\item
We also mention that, besides the approach of cycle algebras along the lines of work of Bloch, Kriz, May et al., Kimura and Terasoma in \cite{KT} develop a theory of relative DGAs and used their theory to define another candidate for an abelian category of mixed elliptic motives. 
\item
Iwanari \cite{I1} uses derived Tannaka duality to describe the stable $\infty$-category of motives generated by a Kimura finite Chow motives as a symmetric monoidal stable $\infty$-category of quasi-coherent complexes on a derived quotient stack. In particular, motives for an elliptic curve are Kimura finite. Based on Tannakian formalism, he \cite{I2} further describes the derived motivic Galois group of $\infty$-category of motives generated by an abelian variety over a number field with some condition. 
\end{itemize}

In outline, the content of the paper is as follow: In Section \ref{motives for an ell}, \ref{FS}, we briefly recall some basic facts about the motives of an elliptic curve and Friedlander-Suslin complexes as preparations. We give the detailed construction of the elliptic cycle algebra $\ml{E}_{ell}^*$ in Section \ref{elliptic cycle algebra} and show their properties in Section \ref{sec-comp}, \ref{sec-compcm}. In order to connect with $\DM_{gm}(k, \mb{Q})$, we formulate the sheaf version of the elliptic cycle algebras in Section \ref{cycle alg}. Then we can construct a functor from the derived category of dg $\ml{E}_{ell}^*$-modules to Voevodsky's big category of motives $\mathbf{DM}(k, \mb{Q})$, whose restriction on the compact objects leads to a functor to $\DM_{gm}(k, \mb{Q})$. In Section \ref{DG modules and motives for an elliptic curve}, we provide such construction and show that this functor induces an equivalence between the compact objects in the derived category of dg $\ml{E}_{ell}^*$-modules and the idempotent complete rigid tensor triangulated subcategory generated by the motives of $E$. As a corollary of Theorem 8.3 in \cite{C}, if 
$\ml{E}_{ell}^*$ is cohomological connected, i.e., Conjecture \ref{main c} and Conjecture \ref{BS vanishing conjecture} hold for $E$, then there exists an abelian category of mixed motives for $E$. In the last section, we show that the embedding of the triangulated category of mixed Tate motives into motives for $E$ can be understood as a map between the derived categories of dg modules, which is induced by the inclusion of a sub-algebra $\widehat{\ml{N}}$ of $\ml{E}_{ell}^*$ to $\ml{E}_{ell}^*$ itself.
\

\section*{Acknowledgements}
This paper is part of the author’s Ph.D. dissertation written at Universit\"{a}t Duisburg-Essen. I am grateful to my advisor Professor Marc Levine for his constant guidance, encouragement and patience during this work. I would like to thank Giuseppe Ancona, Spencer Bloch, Owen Patashnick, Markus Spitzweck, Rin Sugiyama and Tomohide Terasoma for many helpful discussions. Finally, I'd like to thank the referee for useful comments.

\noindent
\textbf{Notations and Conventions:}
\

\noindent Let $k$ be a base field with characteristic zero. 
\

\noindent $\mathbf{Sch}_{k}$: the category of separated schemes (of finite type) over $k$.
\

\noindent $\Sm$: the category of smooth varieties over $k$.
\

\noindent $\Shk$: the category of Nisnevich sheaves with transfers over $k$.
\

\noindent $\Ab$: the category of Abelian groups.
\

\noindent For any additive category $M$, we let $C(M)$ denote the category of unbounded chain complexes over $M$. 
\

\noindent Next we use some notations defined in \cite{C, L}.
\

\begin{enumerate}
\item
Given an Adams cdga $\ml{N}$, we denote the category of cell modules (resp. finite cell modules) over $N$ defined in section 1.4 of \cite{L} by $\ml{CM}_{\ml{N}}$ (resp. $\ml{CM}_{\ml{N}}^f$). Denote the derived category of Adams graded dg $\ml{N}$-modules by $\ml{D}_{\ml{N}}$, which is defined in section 1.4 of \cite{L}. Denote the full subcategory with objects isomorphic in $\ml{D}_{\ml{N}}$ to a finite cell module by $\ml{D}_{\ml{N}}^f$.
\item
Given a cdga $E$ over $GL_2$ (resp. $\mb{T}_{\mb{K}}$), defined in Definition 2.4 in \cite{C}, we we denote the category of cell modules over $\ml{E}$ defined in Definition 2.9 of \cite{C} by $\ml{CM}^{GL_2}_{\ml{E}}$ (resp. $\ml{CM}^{\mb{T}_{\mb{K}}}_{\ml{E}}$). Denote the derived category of dg $\ml{E}$-modules by $\ml{D}^{GL_2}_{\ml{E}}$ (resp. $\ml{D}^{\mb{T}_{\mb{K}}}_{\ml{E}}$), which is defined in Definition 3.2 of \cite{C}. 
\end{enumerate}

%%%%%%%%%%%%%%%%%%%%%%%%%

\section{Motives for an elliptic curve} \label{motives for an ell}
In $\DM_{gm}(k, \mb{Q})$, the motive of $E$ will decompose into:
$$M(E) = \mb{Q} \oplus M_{1}(E)[1] \oplus \mb{Q}(1)[2].$$
Recall that $\DM^{eff}_{gm}(k, \mb{Q})$ is a $\mb{Q}$-linear tensor category. Using the results in Section 1.4 of \cite{Del02} we have the following decomposition of $M_1(E)^{\otimes n}$ (also in the category of Chow motives):
\begin{equation} \label{dec of elliptic motive I}
M_{1}(E)^{\otimes n} \cong \bigoplus_{|\lambda|=n} V_{\lambda} \otimes S_{\lambda}(M_{1}(E)),
\end{equation}
where $S_{\lambda}$ is the Schur functors\footnote{For the definition of Schur functor and notations of partitions, we refer to Section 1.3 and 1.4 of \cite{Del02}.} associated to $\lambda$, a partition of $n$. The index set runs through all partitions of $n$ and $V_{\lambda}$ is the multiplicity space.

%%%%%%% 
\begin{lem} \label{Basic decom of elliptic motive}
Let $E$ be an elliptic curve over $k$. Then we have $S_{\lambda}(M_{1}(E)) = 0$ if $\lambda = (n_1, n_2, \cdots, n_r)$ with $r \geq 3$ and $\wedge^{2}M_1(E) = \mb{Q}(1)$. 
In other words, equality (\ref{dec of elliptic motive I}) can be written as:
\begin{equation} 
M_{1}(E)^{\otimes n} \cong \bigoplus_{\lambda = (a+b,b), a+2b = n} V_{\lambda} \otimes Sym^{a}(M_{1}(E))(b).
\end{equation}
\end{lem}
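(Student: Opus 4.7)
The plan is to reduce both statements to Kimura finite-dimensionality of $M_1(E)$ together with Poincar\'e duality on $E$, working throughout in the Schur functor formalism of Sections 1.3--1.4 of \cite{Del02}.

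For $\wedge^2 M_1(E) \cong \mb{Q}(1)$, I would produce a cup-product pairing $M_1(E) \otimes M_1(E) \to \mb{Q}(1)$ directly from the algebra structure on $M(E)$: concretely, by extracting the weight-$2$ component of the multiplication $M(E) \otimes M(E) \to M(E)$ using the decomposition $M(E) = \mb{Q} \oplus M_1(E)[1] \oplus \mb{Q}(1)[2]$, or equivalently via the motivic Weil pairing on $H^1$ of $E$. The anticommutativity of cup product on classes of odd cohomological degree forces this pairing to factor through $\wedge^2 M_1(E)$, giving a map $\wedge^2 M_1(E) \to \mb{Q}(1)$. Both sides are Kimura-finite with one-dimensional $\ell$-adic realization, and the pairing is Poincar\'e-nondegenerate on realization; since realization is conservative on Kimura-finite summands of powers of abelian varieties (Kimura's theorem together with homological equals numerical equivalence for abelian varieties), the map is an isomorphism in $\DM_{gm}(k, \mb{Q})$.

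For $S_\lambda(M_1(E)) = 0$ when $\ell(\lambda) \geq 3$, the essential point is $\wedge^3 M_1(E) = 0$. The $\ell$-adic realization is trivially zero because $\dim H^1(E, \mb{Q}_\ell) = 2$; since $\wedge^3 M_1(E)$ is a projector-summand of the Kimura-finite motive $M_1(E)^{\otimes 3}$, conservativity of realization forces the motive itself to vanish. For an arbitrary partition $\lambda$ with $\ell(\lambda) \geq 3$, the Young symmetrizer $c_\lambda \in \mb{Q}[S_{|\lambda|}]$ producing $S_\lambda$ factors through the column-antisymmetrization for the first column of $\lambda$, whose length equals $\ell(\lambda) \geq 3$; this factor alone sends $M_1(E)^{\otimes |\lambda|}$ through $\wedge^{\ell(\lambda)} M_1(E) \otimes M_1(E)^{\otimes(|\lambda| - \ell(\lambda))}$, and $\wedge^r M_1(E) = 0$ for all $r \geq 3$ (since $\wedge^{r+1}(-)$ is a quotient of $\wedge^r(-) \otimes (-)$, so vanishing propagates upward). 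Hence $S_\lambda(M_1(E)) = 0$.

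Substituting both facts into the displayed decomposition restricts the sum to partitions with at most two rows, parameterized as $\lambda = (a+b, b)$ with $a + 2b = n$. The two-row Schur identity $S_{(a+b,b)}(V) = Sym^a(V) \otimes (\wedge^2 V)^{\otimes b}$ combined with $\wedge^2 M_1(E) = \mb{Q}(1)$ yields $S_{(a+b,b)}(M_1(E)) = Sym^a(M_1(E))(b)$, which is the stated form. The main obstacle will be invoking the correct conservativity input: to deduce vanishing of $\wedge^3 M_1(E)$ and the isomorphism from the Weil pairing purely from realization-level statements, one needs realization to be conservative on the Kimura-finite sub-tensor category containing all Schur constructs of $M_1(E)$, which is the standard consequence of Kimura's theorem for motives generated by an abelian variety in characteristic zero.
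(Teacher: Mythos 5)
Your argument is essentially correct, but it takes a genuinely different route from the paper. The paper simply pushes Kimura's explicit computation for curves (Theorem 4.2 of \cite{Kim}: $Sym^i h^1(E)=0$ for $i\geq 3$ and $Sym^2 h^1(E)=\mb{L}$) through the contravariant embedding $\Phi$ of Chow motives into $\DM^{eff}_{gm}(k,\mb{Q})$, where the shift converts symmetric powers of $h^1(E)$ into exterior powers of $M_1(E)$; the vanishing for $\ell(\lambda)\geq 3$ then follows from the transpose-partition fact that $S_\lambda(M_1(E))$ is a summand of $\wedge^{m_1}M_1(E)\otimes\cdots\otimes\wedge^{m_s}M_1(E)$ with $m_1=r\geq 3$ (your factorization through the first-column antisymmetrizer is the same point). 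You instead re-derive Kimura's curve computation from scratch: finite-dimensionality of $M_1(E)^{\otimes n}$, vanishing of the realization of $\wedge^3 M_1(E)$ (correct, since $M_1(E)$ realizes to a two-dimensional space concentrated in a single even degree), and conservativity of realization on finite-dimensional motives, plus an explicit Weil/duality pairing to identify $\wedge^2 M_1(E)$ with $\mb{Q}(1)$. This works, but it buys generality at the cost of extra machinery and some glossed details: conservativity on \emph{morphisms} is not formally the same as conservativity on objects, so for the pairing you should either invoke the Andr\'e--Kahn/Kimura statement that realization is conservative on finite-dimensional motives, or argue via the co-pairing $\mb{Q}(1)\to\wedge^2 M_1(E)$ (from $M_1(E)^\vee\cong M_1(E)(-1)$) whose composite with your map is a nonzero scalar, splitting off $\mb{Q}(1)$ and killing the complement by conservativity on objects; the claim that the pairing factors through $\wedge^2$ also needs the sign bookkeeping for the shift ($Sym^2$ on $h^1(E)$ becomes $\wedge^2$ on $M_1(E)$), which is exactly what the paper's use of the commutativity constraint records. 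Finally, note that your closing two-row identity $S_{(a+b,b)}(V)\cong Sym^a(V)\otimes(\wedge^2 V)^{\otimes b}$ is not a general Schur-functor identity; it needs $\wedge^3 M_1(E)=0$ and invertibility of $\wedge^2 M_1(E)$ (Deligne, \cite{Del02} \S 1.4), which you have established, and which the paper uses implicitly at the same step.
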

\begin{proof}
By Proposition 20.1 in \cite{MVW}, we know that the category of effective Chow motives embeds contravariantly into $\DM^{eff}_{gm}(k, \mb{Q})$. Let us denote this functor by $\Phi$. In the category of Chow motives, we have the following decomposition of  the Chow motive of $E$:
$$h(E) = h^{0}(E) \oplus h^{1}(E) \oplus h^2(E).$$
Note that the image of $h^1(E)$ under $\Phi$ is $M_1(E)[1]$. Using Theorem 4.2 in \cite{Kim}, we get:
$$Sym^{i}h^{1}(E) = 0 \ \ \text{if} \ \ \ i \geq 3.$$
and
$$Sym^{2}h^1(E) = \mb{L}.$$
Here $\mb{L}$ is the Lefschetz motive in the category of Chow motives. Recall that the image of $\mb{L}$ under $\Phi$ is $\mb{Q}(1)[2]$(Remark 20.2 in \cite{MVW}). Because $\Phi$ is a tensor functor, using commutative constraint in $\DM^{eff}_{gm}(k, \mb{Q})$, we have:
\begin{equation} \nonumber
\Phi(Sym^i h^1(E)) = Sym^i (M_{1}(E)[1]) = (\wedge^iM_1(E))[i] 
\end{equation}
This implies that:
$$\wedge^i M_1(E) = 0 \ \ \ \text{if} \ \ \ i \geq 3,$$
and
$$\wedge^2 M_1(E) = \mb{Q}(1).$$
Given $\lambda = (n_1, n_2, \cdots, n_r)$ , by the definition of Young symmetrizer, we know that: $S_{\lambda}(M_1(E))$ is a direct summand of $\wedge^{m_1}M_1(E) \otimes \cdots \otimes \wedge^{m_s}M_1(E)$, where $(m_1, \cdots, m_s) = \lambda^t$\footnote{Here $\lambda^t$ is the transpose (or conjugate) of $\lambda$, which is defined by interchanging rows and columns in the Young diagram associated to $\lambda$.}. When $r \geq 3$, then we have $m_1 \geq 3$. By the above computation, we obtain that $S_{\lambda}(M_1(E)) = 0$.
\end{proof}
\begin{df} \label{DMEM_E}
Given an elliptic curve $E$, the full idempotent complete rigid tensor triangulated subcategory of $\DM_{gm}(k, \mb{Q})$ generated by $M(E)$ is denoted by $\DMEM(k, \mb{Q})_E$. 
\end{df}
\begin{rmk}
We remark that $\DMEM(k, \mb{Q})_E$ contains the category of mixed Tate motives because of the decomposition of the motive of $E$ in the beginning of this section.
\end{rmk}
\begin{rmk} \label{Basic decom of elliptic motiveII}
If $E$ is an elliptic curve with CM, we let $\mb{K} = End(E) \otimes \mb{Q}$, which is an imaginary quadratic field. Then we will consider $\DMEM(k, \mb{K})_E$. We recall that $M_1(E)_{\mb{K}}$ is decomposed as a direct sum of two motives $M$ and $\bar{M}$ in $\DM_{gm}(k, \mb{K})$. See Proposition 7.2 in \cite{A}. This decomposition is induced by the action of $\mb{K}$. For a given two-dimensional rational vector space $\V$, viewed as a $\mathbf{Res}_{\mb{K}/\mb{Q}}\mb{G}_m$-representation, then we have a decomposition as before:
\[
(\V_\mb{K})^{\otimes n} \cong \bigoplus_{a+2b = n, a, b \in \mb{Z}_{\geq 0}} V_\lambda \otimes Sym^a(\V_\mb{K})(b).
\]
Furthermore, the piece 
\[
c_n (\V_\mb{K})^{\otimes n} = V_{(n,0)} \otimes Sym^n(\V_\mb{K}) \cong \bigoplus_{i+j = n, i, j \in \mb{Z}} (V_{(n,0)} \otimes V^{\otimes i} \otimes \bar{V}^{\otimes j}) \otimes V_{i, j},\footnote{We view $c_n$ as an idempotent in $End((\V_\mb{K})^{\otimes n})$, which lies in $End_{\mathbf{Res}_{\mb{K}/\mb{Q}}\mb{G}_m \otimes \mb{K}}((\V_{\mb{K}})^{\otimes n})$.}
\]
where $V_{(n,0)} \otimes V^{\otimes i} \otimes \bar{V}^{\otimes j}$ are pairwise non-isomorphic irreducible representations over a $\mb{K}$-algebra $End_{\mathbf{Res}_{\mb{K}/\mb{Q}}\mb{G}_m \otimes \mb{K}}((\V_{\mb{K}})^{\otimes n})$ and $V_{i,j}$ are pairwise non-isomorphic irreducible representation over $\mathbf{Res}_{\mb{K}/\mb{Q}}\mb{G}_m \otimes \mb{K} = \mb{T}_{\mb{K}}$. For simplicity, we delete $V_{i,j}$ and one may think that both $V$ and $\bar{V}$ are endowed with the $\mb{G}_m$-action. In fact, $End_{\mathbf{Res}_{\mb{K}/\mb{Q}}\mb{G}_m \otimes \mb{K}}((\V_{\mb{K}})^{\otimes n})$ is a special case defined in the Section 3.9 of \cite{A}, which is called $B_{n, \mb{K}}$. Ancona's main result -- Theorem 4.1 in \cite{A} implies that the decomposition like Lemma \ref{Basic decom of elliptic motive} is holding for the CM elliptic motives:
\[
c_n(M_1(E)_{\mb{K}})^{\otimes n} \cong \bigoplus_{i+j = n, i, j \in \mb{Z}} V_{(n,0)} \otimes M^{\otimes i} \otimes \bar{M}^{\otimes j}.
\]
\end{rmk}
\begin{df}  \label{r-th van}
\begin{itemize}
\item[1)]
We say the $0$-th vanishing property holds for $E$ if :
\begin{equation} \nonumber
(\mathbf{Non-CM \ case}) \ \ \ \ \ \  Hom_{\DM_{gm}(k, \mb{Q})}(Sym^{2i}M_1(E), \mb{Q}(i)[j]) \cong 0,
\end{equation}
for any $j \in \mb{Z}_{\leq 0}$, any $i \in \mb{Z}_{> 0}$;
\begin{equation} \nonumber
(\mathbf{CM \ case}) \ \ \ \ \ Hom_{\DM_{gm}(k, \mb{K})}(M^{\otimes 2i}, \mb{K}(i)[j]) \cong Hom_{\DM_{gm}(k, \mb{K})}(\bar{M}^{\otimes 2i}, \mb{K}(i)[j]) \cong 0
\end{equation}
for any $j \in \mb{Z}_{\leq 0}$ and any $i \in \mb{Z}_{> 0}$
\item[2)]
Let $r$ be a positive integer. We say the $r$-th vanishing property holds for $E$ if 
\begin{equation} \nonumber
(\mathbf{Non-CM \ case}) \ \ \ \ \ \  Hom_{\DM_{gm}(k,\mb{Q})}(Sym^{2i+r}M_1(E), \mb{Q}(i)[j]) \cong 0,
\end{equation}
for any $j \in \mb{Z}$ such that $r+j \leq 0$ and any $i \in \mb{Z}_{\geq 0}$;
\begin{equation} \nonumber
(\mathbf{CM \ case}) \ \ \ \ \ Hom_{\DM_{gm}(k,\mb{K})}(M^{\otimes 2i+r}, \mb{Q}(i)[j]) \cong Hom_{\DM_{gm}(k,\mb{K})}(\bar{M}^{\otimes 2i+r}, \mb{Q}(i)[j]) \cong 0,
\end{equation}
for any $j \in \mb{Z}$ such that $r+j \leq 0$ and any $i \in \mb{Z}_{\geq 0}$.
\end{itemize}
 %such that $2i + r \geq 0$. 
\end{df}
\begin{conj}   \label{main c}
If $E$ be an elliptic curve over a field $k$ of characteristic zero, then $E$ has the $r$-th vanishing property for any non-negative integer $r$. 
\end{conj}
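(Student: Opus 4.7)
The plan is to translate the Hom groups into motivic cohomology of products of $E$ cut out by symmetric-group idempotents, and then apply a suitable Beilinson-Soul\'e-type vanishing; the statement is posed as a conjecture because the final step is currently open.

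First I would observe that since $M_1(E)[1]$ is a direct summand of $M(E)$, the object $Sym^{2i+r}M_1(E)[2i+r]$ is a Schur summand of $M(E)^{\otimes (2i+r)} = M(E^{2i+r})$, cut out by an idempotent $e_{\lambda}$ (for $\lambda = (2i+r)$) built from the action of $\mb{Q}[\Sigma_{2i+r}]$ on the tensor factors. This identifies
\[
Hom_{\DM_{gm}(k,\mb{Q})}\bigl(Sym^{2i+r}M_1(E),\, \mb{Q}(i)[j]\bigr)
\]
with the $e_{\lambda}$-isotypic summand of the motivic cohomology $H^{j+2i+r}\bigl(E^{2i+r},\, \mb{Q}(i)\bigr)$. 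In the CM case one refines further using the $\mb{K}$-linear splitting $M_1(E)_{\mb{K}} = M \oplus \bar{M}$ (Proposition 7.2 of \cite{A}) to isolate the $M^{\otimes(2i+r)}$ and $\bar{M}^{\otimes(2i+r)}$ summands.

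Next I would exploit the hypothesis $r+j \leq 0$, which places the cohomological degree at most $2i$ with weight $i$. Here I would combine: (i) the Chow-K\"unneth decomposition of $E^n$ (Deninger-Murre), together with Lemma \ref{Basic decom of elliptic motive} showing that $Sym^a M_1(E)$ lives in a pure Chow summand with no hidden Tate twists; and (ii) the fact that the realization of $Sym^a M_1(E)$ is concentrated in a single Hodge weight. These steps reduce each vanishing claim to the assertion that a specific Schur-summand of a higher Chow group $CH^{i}\bigl(E^{2i+r},\, 2i - j - r\bigr)\otimes \mb{Q}$ vanishes.

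The main obstacle is precisely this final reduction: one ends up with a Beilinson-Soul\'e-type vanishing for specific Schur-isotypic summands of $CH^*(E^n,*)$, which is open in general. The classical Beilinson-Soul\'e conjecture applied to $E^n$ disposes of the range $j \leq -(2i+r)$, and $r$ large compared to $i$ can be partially treated by exploiting that no partition with three or more rows survives on $M_1(E)$ (Lemma \ref{Basic decom of elliptic motive}); but the intermediate range $-(2i+r) < j \leq -r$ is exactly where the refined statement is needed, and it is for this reason that the conjecture is recorded here without a complete proof.
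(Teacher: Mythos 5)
This statement is Conjecture \ref{main c}: the paper gives no proof of it, and offers nothing beyond the special cases worked out in Example \ref{ex for van} and Example \ref{ex for nonvan} (essentially the $i=1$ case of the $0$-th vanishing property, proved via the computation of $Hom_{\DM_{gm}(k,\mb{Q})}(M(E\times E), \mb{Q}(1)[*])$ and the endomorphism algebra of $E$). So there is no argument to compare yours against, and your write-up is accurate on precisely this point: you reduce the statement to the vanishing of Schur-isotypic pieces of motivic cohomology of $E^{2i+r}$ and correctly flag that this refined Beilinson--Soul\'e-type vanishing is open. That reduction is the same translation the paper itself relies on elsewhere (Lemma \ref{Comp of cycle} and Lemma \ref{Basic lemma I} identify these Hom groups with isotypic summands of $H^{j+2i+r}(E^{2i+r},\mb{Q}(i))$, and Remark \ref{relation with the B-S} records the analogous relation of Conjecture \ref{BS vanishing conjecture} to $BS^{*}_{E^{n}}$). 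Your identification of the problematic range is also right: Beilinson--Soul\'e for $E^{n}$ (itself conjectural) only covers $j+2i+r\leq 0$, whereas the $r$-th vanishing property asks for vanishing of a specific isotypic piece in degrees where the full motivic cohomology of $E^{2i+r}$ is genuinely nonzero; Example \ref{ex for van} illustrates this already for $r=0$, $i=1$, where $H^{2}(E^{2},\mb{Q}(1))$ contains $Pic(E\times E)_{\mb{Q}}$ and the vanishing of the $Sym^{2}$-piece comes from $End(E)\otimes\mb{Q}\cong\mb{Q}$, not from any degree bound.

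Three small corrections to the reduction itself. First, the higher Chow group you land in should be $CH^{i}(E^{2i+r}, -j-r)\otimes\mb{Q}$, not $CH^{i}(E^{2i+r}, 2i-j-r)\otimes\mb{Q}$: with $H^{m}(X,\mb{Q}(i))\cong CH^{i}(X,2i-m)_{\mb{Q}}$ and $m=j+2i+r$, the second index is $-j-r$. Second, because of the shift $M_{1}(E)[1]$ one has $Sym^{n}M_{1}(E)[n]\cong \wedge^{n}(M_{1}(E)[1])$, so the idempotent cutting this summand out of $M(E^{n})$ is the one attached to the transposed (sign) partition rather than $\lambda=(n)$; this is harmless but matters if one actually writes down $e_{\lambda}$, and it is exactly the sign twist built into the complexes $C^{Alt,-}_{*}$ in Section \ref{elliptic cycle algebra}. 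Third, the suggestion that large $r$ is partially handled by the vanishing of partitions with three or more rows (Lemma \ref{Basic decom of elliptic motive}) does not buy anything here: $Sym^{2i+r}M_{1}(E)$ corresponds to a one-row partition and is never killed by that lemma, so the only unconditional inputs available are of the type of Example \ref{ex for van}, which do not extend to general $i$ and $r$.
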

\begin{exe} \label{ex for van}
Assume that $E$ is an elliptic curve without CM, then we have:
\begin{equation}  \nonumber
Hom_{\DM_{gm}(k, \mb{Q})}(Sym^2 M_1(E), \mb{Q}(1)[*]) \cong 0.
\end{equation}
\begin{proof}
Notice that:
$$Hom_{\DM_{gm}(k, \mb{Q})}(Sym^2 M_1(E), \mb{Q}(1)[i]) \cong Hom_{\DM_{gm}(k, \mb{Q})}(Sym^2 M_1(E)[2], \mb{Q}(1)[i+2])$$ 
is a direct summand of $Hom_{\DM_{gm}(k, \mb{Q})}((M_1(E)[1])^{\otimes 2}, \mb{Q}(1)[i+2])$, therefore a direct summand of 
$$Hom_{\DM_{gm}(k, \mb{Q})}(M(E \times E), \mb{Q}(1)[i+2]).$$
It's well known that, (for example, chapter 3 in \cite{MVW}):
\begin{itemize}
\item
When $i = 0$, $Hom_{\DM_{gm}(k, \mb{Q})}(M(E \times E), \mb{Q}(1)[2]) \cong Pic(E \times E)$;
\item
When $i = -1$, $Hom_{\DM_{gm}(k, \mb{Q})}(M(E \times E), \mb{Q}(1)[1]) \cong k^{*}$;
\item
Otherwise,  $Hom_{\DM_{gm}(k, \mb{Q})}(M(E \times E), \mb{Q}(1)[2+i]) \cong 0$.
\end{itemize}
Notice that $Hom_{\DM_{gm}(k, \mb{Q})}(\mb{Q}, \mb{Q}(1)[1]) \cong k^{*}$ is a direct summand of 
\

\noindent $Hom_{\DM_{gm}(k, \mb{Q})}(M(E \times E), \mb{Q}(1)[1])$, which implies that:
$$Hom_{\DM_{gm}(k, \mb{Q})}((M_1(E)[1])^{\otimes 2}, \mb{Q}(1)[i]) \cong 0 \ \ \ \text{if} \ \ \ i \neq 0.$$
Then
\begin{equation}  \label{1}
\begin{split}  
& Hom_{\DM_{gm}(k, \mb{Q})}(M_{1}(E^2), \mb{Q}(1)) \\
\cong & Hom_{\DM_{gm}(k, \mb{Q})}(Sym^2 M_1(E), \mb{Q}(1)) \oplus Hom_{\DM_{gm}(k, \mb{Q})}(\mb{Q}(1), \mb{Q}(1)) \\
\cong & Hom_{\DM_{gm}(k, \mb{Q})}(Sym^2 M_1(E), \mb{Q}(1)) \oplus  \mb{Q}
\end{split}
\end{equation}
On the other hand, we have:
\begin{equation} \label{2}
\begin{split}
& Hom_{\DM_{gm}(k, \mb{Q})}(M_{1}(E^2), \mb{Q}(1)) 
\cong  Hom_{\DM_{gm}(k, \mb{Q})}(M_1(E), M_1(E))  
\cong  Hom_{Ab_{\mb{Q}}}(E, E),
\end{split}
\end{equation}
where $Ab_{\mb{Q}}$ is in the category of abelian varieties up to isogeny.
For the first isomorphism, we use the facts that the dual motive of $M_{1}(E)$ is $M_1(E)(-1)$ and the properties of internal hom in $\DM_{gm}(k, \mb{Q})$. For the second one, we use the fact that the category of abelian varieties up to isogeny fully embeds into $\DM^{eff}(k, \mb{Q})$, for example, Proposition 2.2.1 in \cite{AEH}.
\

Putting together (\ref{1}) and (\ref{2}), we get:
$$Hom_{\DM_{gm}(k, \mb{Q})}(Sym^2 M_1(E), \mb{Q}(1)) \oplus  \mb{Q} \cong Hom_{Ab_{\mb{Q}}}(E, E).$$
If $E$ is an elliptic curve without CM, then we have: $Hom_{Ab_{\mb{Q}}}(E, E) \cong \mb{Q}$, which implies that:
$$Hom_{\DM_{gm}(k, \mb{Q})}(Sym^2 M_1(E), \mb{Q}(1))  \cong 0.$$
\end{proof}
\end{exe}
\begin{exe}  \label{ex for nonvan}
We let $E$ be an elliptic curve over $k$ with CM. After extending the rational coefficients for motives to $\mb{K}$-coefficients,  the similar analysis as above tells us
\begin{equation}  \nonumber
Hom_{\DM_{gm}(k, \mb{K})}(Sym^2 M_1(E)_{\mb{K}}, \mb{K}(1)[*]) \cong \mb{K}.
\end{equation}
Then the identification
$Sym^2 M_1(E)_{\mb{K}} \cong Sym^2(M \oplus \bar{M}) \cong M \otimes M \oplus M \otimes \bar{M} \oplus \bar{M} \otimes \bar{M}$
implies that:
\begin{itemize}
\item[i).]
$Hom_{\DM_{gm}(k, \mb{K})}(M \otimes M, \mb{K}(1)[*]) \cong 0$,
\item[ii).]
$Hom_{\DM_{gm}(k, \mb{K})}(\bar{M} \otimes \bar{M}, \mb{K}(1)[*]) \cong 0$,
\item[iii)]
$Hom_{\DM_{gm}(k, \mb{K})}(M \otimes \bar{M}, \mb{K}(1)[*]) \cong \mb{K}$.
\end{itemize}
In fact, we have $M \otimes \bar{M} \cong \mb{K}(1)$ in $\DM_{gm}(k, \mb{K})$.
\end{exe}
%%%%%%%%%%%% 本节全部用的\DM_{gm}

%\noindent In this chapter, fix a base field $k$ of character zero.
\section{Friedlander-Suslin complexes and their alternating versions} \label{FS}
\begin{df}
Take $Y$ in $\mathbf{Sm}_{k}$ and $X$ in $\mathbf{Sch}_{k}$.
Let $z_{q.fin}(X)(Y)$ be the free abelian group generated by integral closed subschemes $W \subset Y \times_{k} X$ such that $p_{1}: W \longrightarrow Y$ is quasi-finite and dominant over an irreducible component of $Y$.
\end{df}
\begin{rmk}
We recall that forany $i \in \mb{Z}$, the Friedlander-Suslin complexes $\mathbb{Z}^{SF}(i)$ is defined by: $$\mathbb{Z}^{SF}(i) = C_{*}z_{q.fin}(\mathbb{A}^{i})[-2i].$$
\end{rmk}

In order to define the alternating versions of Friedlander-Suslin complexes,  we define $C^{cb}_{*}(\mathcal{F})$ and $C^{Alt}_{*}(\mathcal{F})$ for every $\mathcal{F}$ a presheaf over $\mathbf{Sm}_{k}$.

\begin{df}
Let $X \in \mathbf{Sm}_{k}$ and $\mathcal{F}$ as above. Let $C^{cb}_{n}(\mathcal{F})$ be the presheaf
$$C^{cb}_{n}(\mathcal{F})(X) = \mathcal{F}(X \times \Box^{n})/ \sum^{n}_{j=1} \pi^{*}_{j}(\mathcal{F}(X \times \Box^{n-1})).$$
and the differential is given by:
$$d_{n} = \sum^{n}_{j=1}(-1)^{j-1}\mathcal{F}(\iota_{j,1}) - \sum^{n}_{j=1}(-1)^{j-1}\mathcal{F}(\iota_{j,0}).$$
\end{df}

If $\mathcal{F}$ is a Nisnevich presheaf (sheaf, with transfers), then $C^{cb}_{*}(\mathcal{F})$ is a complex of Nisnevich prsheaves (sheaves, with transfers). One can extend the construction to complexes of sheaves (with transfers) by taking the total complex. We can define $C^{Alt}_{*}(\mathcal{F})$ as a subcomplex of $C^{cb}_{*}(\mathcal{F}) \otimes \mathbb{Q}$ by taking the alternating elements in $C^{cb}_{*}(\mathcal{F})(Y)$ %as in Definition \ref{def of alt} 
for every $Y \in \mathbf{Sm}_{k}$.

\begin{rmk}
There is another definition of the alternating complex without taking the quotient by the degenerate cycles. See Remark 4.1.2 in \cite{L}.
\end{rmk}

The following theorem is concerning some comparison results about the above constructions. The proof can be found in Section 2.5 in \cite{L3}.
\begin{thm} \label{Comp alt}
Let $\mathcal{F}$ be a complex of presheaves on $\mathbf{Sm}_{k}$.
\

\begin{itemize}
\item
There is a natural isomorphism $C_{*}(\mathcal{F}) \cong C^{cb}_{*}(\mathcal{F})$ in the derived category of pre-sheaves on $\mathbf{Sm}_{k}$. If $\mathcal{F}$ is a complex of presheaves with transfer, there is also an isomorphism $C_{*}(\mathcal{F}) \cong C^{cb}_{*}(\mathcal{F})$ in the derived category of presheaves with transfers $D(\PST)$.
\item
The inclusion $C^{Alt}_{*}(\mathcal{F})(Y) \subset C^{cb}_{*}(\mathcal{F})_{\mathbb{Q}} (Y)$ is a quasi-isomorphism for all $Y \in \mathbf{Sm}_{k}$.
\end{itemize}
\end{thm}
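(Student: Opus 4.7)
The two bullets are essentially independent. The first is a comparison between two functorial ``Suslin-type'' models for $\mathcal{F}$ (simplicial $\Delta^{\bullet}$ versus mod-degenerate cubical $\Box^{\bullet}$); the second is a statement about an antisymmetrization idempotent on the cubical model after passing to $\mathbb{Q}$-coefficients.

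For the first bullet, my plan is to mimic the classical Eilenberg--Zilber comparison. I would construct a natural chain map $\Phi: C^{cb}_{*}(\mathcal{F}) \to C_{*}(\mathcal{F})$ by triangulating $\Box^{n}$ into $n!$ simplices indexed by $\Sigma_{n}$ and pulling $\mathcal{F}$ back along the resulting maps $\Delta^{n} \to \Box^{n}$ with the usual shuffle signs. Degenerate cubes are sent to sums of degenerate simplices, so $\Phi$ descends to the quotient $C^{cb}_{*}$, and a direct computation shows it intertwines the two differentials. For section-wise quasi-isomorphism, one reduces to the constant-coefficient case, where both sides compute the chains of a combinatorially contractible object and the statement is the classical Eilenberg--Zilber theorem. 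Since the triangulation is given by morphisms in $\Sch$, the same $\Phi$ is represented by finite correspondences and yields the analogous isomorphism in $D(\PST)$.

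For the second bullet, let $\Sigma_{n}$ act on $\Box^{n}$ by permutation of factors. Since permutations preserve degenerate cubes, the antisymmetrization
\[
\mathrm{alt}_{n} := \frac{1}{n!} \sum_{\sigma \in \Sigma_{n}} \mathrm{sgn}(\sigma)\, \sigma^{*}
\]
is a well-defined idempotent on $C^{cb}_{n}(\mathcal{F})_{\mathbb{Q}}(Y)$ with image exactly $C^{Alt}_{n}(\mathcal{F})(Y)$. By Maschke's theorem one has a $\Sigma_{n}$-equivariant decomposition $C^{cb}_{n} \otimes \mathbb{Q} = C^{Alt}_{n} \oplus K_{n}$ with $K_{n} = \ker(\mathrm{alt}_{n})$, and the claim is equivalent to the acyclicity of $K_{*}$, i.e.\ to constructing a natural chain homotopy between $\mathrm{alt}$ and $\mathrm{id}$ on $C^{cb}_{*}(\mathcal{F})_{\mathbb{Q}}(Y)$. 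Since adjacent transpositions $\tau_{i} = (i,i+1)$ generate $\Sigma_{n}$, this further reduces to producing, for each $\tau_{i}$, a natural chain homotopy between $\tau_{i}^{*}$ and $-\mathrm{id}$ on $C^{cb}_{*}(\mathcal{F})_{\mathbb{Q}}(Y)$. The natural candidate is built from the ``swap cylinder''
\[
H_{i}: \Box \times \Box^{n} \longrightarrow \Box^{n}, \quad (t,x) \mapsto (x_{1}, \ldots, (1-t) x_{i} + t x_{i+1},\; (1-t) x_{i+1} + t x_{i}, \ldots, x_{n}),
\]
whose pullback $\mathcal{F}(H_{i})$ realizes a chain homotopy between $\tau_{i}^{*}$ and $+\mathrm{id}$ on the \emph{unnormalized} cubical complex.

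The main technical obstacle is the sign: on the mod-degenerate quotient one needs $\tau_{i}^{*} \simeq -\mathrm{id}$, not $+\mathrm{id}$. The point is that $\mathcal{F}(H_{i})$ does not preserve the subcomplex of degenerate cubes, so the unnormalized homotopy does not descend to $C^{cb}$. A careful analysis of the face contributions of $H_{i}$ at $x_{i} = \epsilon$ and $x_{i+1} = \epsilon$ identifies the non-descending part as a pair of ``diagonal'' chains which, after being combined with the appropriate Koszul signs from the cubical differential, produce on the quotient a genuine homotopy between $\tau_{i}^{*}$ and $-\mathrm{id}$ rather than $+\mathrm{id}$. Once this sign computation is correctly in place, averaging over $\Sigma_{n}$ immediately yields the chain homotopy between $\mathrm{alt}_{n}$ and $\mathrm{id}$, and the inclusion $C^{Alt}_{*} \hookrightarrow C^{cb}_{*} \otimes \mathbb{Q}$ is a quasi-isomorphism.
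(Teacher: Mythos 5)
Your proposal has genuine gaps in both bullets; note also that the paper itself gives no argument beyond citing \cite{L3}, Section 2.5, so what you are attempting is a self-contained proof, and the missing steps are exactly the content of that reference. For the first bullet, defining $\Phi$ by triangulating $\Box^n$ into $n!$ simplices is a reasonable start, but the claim that one can ``reduce to the constant-coefficient case'' and invoke the classical Eilenberg--Zilber theorem is not a valid step: $\mathcal{F}$ is an arbitrary complex of presheaves, not homotopy invariant, and $C_{*}(\mathcal{F})(Y)$, $C^{cb}_{*}(\mathcal{F})(Y)$ are not ``chains on a contractible object'' -- their homology depends on $\mathcal{F}$ in an essential way, so there is no constant-coefficient case that controls it. A proof must produce a universal homotopy inverse and universal chain homotopies given by $\mathbb{Z}$-linear combinations of \emph{scheme morphisms} among the $\Delta^{p}$ and $\Box^{q}$, to be evaluated under $\mathcal{F}(Y\times -)$; the acyclic-models homotopies from topology involve arbitrary singular simplices of cubes and are not algebraic, so they cannot be transported. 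In \cite{L3} these universal homotopies are built from the extra algebraic structure on $\Box^{\bullet}=\mathbb{A}^{\bullet}$, notably the multiplication $\mu(x,y)=xy$ (``extended cubical objects''). Separately, your descent of $\Phi$ through the quotient by degenerate cubes is a non sequitur as stated: being sent to a sum of \emph{degenerate simplices} is not being sent to zero in the unnormalized complex $C_{*}(\mathcal{F})$; you need either an actual cancellation showing degenerate cubes map to zero, or to compose with the projection to the normalized simplicial complex and use that this projection is a quasi-isomorphism.

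For the second bullet, the skeleton (show $\ker(\mathrm{alt})$ is acyclic by proving adjacent transpositions act as $-\mathrm{id}$ up to homotopy modulo degenerates, then average) is the standard one, but the proposed homotopy is wrong and the decisive step is deferred. The linear swap cylinder $H_{i}$ is not a chain homotopy even on the unnormalized complex: its restriction along the face inclusions in the two swapped slots, say $x_{i}=\epsilon$, is $(t,x)\mapsto(\ldots,(1-t)\epsilon+tx_{i+1},\,(1-t)x_{i+1}+t\epsilon,\ldots)$, which does not factor as (face inclusion) composed with a lower-degree homotopy, so $dh+hd$ contains extra terms beyond $\tau_{i}^{*}-\mathrm{id}$ and the lower homotopy terms. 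More fundamentally, any homotopy interpolating between $\mathrm{id}$ and $\tau_{i}$ can only ever relate $\tau_{i}^{*}$ to $+\mathrm{id}$; no bookkeeping of ``diagonal chains'' will convert this into the needed relation $\tau_{i}^{*}\simeq-\mathrm{id}$ on the mod-degenerate complex. The actual mechanism for the sign is different: using the multiplication maps one exhibits $\mathrm{id}+\tau_{i}^{*}$ as a boundary plus terms that are degenerate, hence vanish in $C^{cb}_{*}$ -- again the extended-cubical argument of \cite{L3}. Since your sentence ``once this sign computation is correctly in place'' is precisely where the content lies, the second bullet is not proved. A further point to handle when you repair this: an individual $\tau_{i}^{*}$ is only a degreewise operator, not a chain endomorphism of $C^{cb}_{*}$, so ``chain homotopy between $\tau_{i}^{*}$ and $-\mathrm{id}$'' must be formulated as a system of maps compatible with the face maps before the averaging over $\Sigma_{n}$ (which is fine once such natural homotopies exist) can be carried out.
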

As a corollary, we take $\ml{F}$ to be $z_{q.fin}(\mb{A}^i)$ and get the alternating versions of Friedlander-Suslin complexes, which are quasi-isomorphic to the original ones.

\section{The cycle algebra for an elliptic curve} \label{elliptic cycle algebra}
\noindent Let $E$ be an elliptic curve defined over a base field $k$ of characteristic zero. Given a positive integer $a$, we denote the $a$-th power of $E$ by $E^{a}$. 
\

\begin{df}
The sign character $\mathbf{sgn}: \mb{Z}/2\mb{Z} \to \{\pm 1\}$  extends to the map 
$$\rho: (\mb{Z}/2\mb{Z})^a \to \{\pm 1\}^a$$
by
$$\rho(\eta_1, \cdots, \eta_a) = \{\mathbf{sgn}(\eta_1), \cdots,  \mathbf{sgn}(\eta_a)\}$$
for $(\eta_1, \cdots, \eta_a) \in (\mb{Z}/2\mb{Z})^a$.
\end{df}
The group $\Gamma_n = (\mb{Z}/2\mb{Z})^{a} \rtimes \Sigma_{a}$ acts on $E^{a}$ in the following way: $\Sigma_{a}$ permutes the components of $E^{a}$ and the $i$-th generator $(0, \cdots, 1, \cdots, 0)$ in $(\mb{Z}/2\mb{Z})^{a}$ acts on the $i$-th component $E$ of $E^{a}$ by the inversion, i.e., $x \xrightarrow{\sigma_{i}} -x$. In the following, for a given $g \in \Gamma_n$, we denote the action of $g$ on an algebraic cycle $Z$ by $g(Z)$. For $i \in \mb{Z}$,  we define a subgroup of $C^{Alt}_{i}(z_{q.fin}(\mb{A}^{b}))(E^{a})$:
\begin{equation} \nonumber
\begin{split}
& C^{Alt , -}_{i}(z_{q.fin}(\mb{A}^{b}))(E^{a}) 
=  \{Z \in C^{Alt}_{i}(z_{q.fin}(\mb{A}^{b}))(E^{a}) | g(Z) = \rho(g)Z \  \forall g \in (\mb{Z}/2\mb{Z})^{a} \}.
\end{split}
\end{equation}
We denote the corresponding cycle complex by $C^{Alt , -}_{*}(z_{q.fin}(\mb{A}^{b}))(E^{a})$. Given $\sigma \in \mb{Q}[\Sigma_{a}]$, define $$Z \bullet \sigma = sgn(\sigma)\sigma^{-1}(Z)$$ for $Z \in C^{Alt , -}_{i}(z_{q.fin}(\mb{A}^{b}))(E^{a})$. This makes $C^{Alt , -}_{i}(z_{q.fin}(\mb{A}^{b}))(E^{a})$ into a right $\mb{Q}[\Sigma_{a}]$-module. We also have the action of the symmetric group $\Sigma_{b}$ on $C^{Alt , -}_{i}(z_{q.fin}(\mb{A}^{b}))(E^{a})$, by permuting the coordinates of $\mb{A}^{b}$. Taking the symmetric sections with respect to the action of $\Sigma_{b}$, we get a sub-complex $\widetilde{C}^{Alt , -}_{i}(z_{q.fin}(\mb{A}^{b}))(E^{a})$ of $C^{Alt , -}_{i}(z_{q.fin}(\mb{A}^{b}))(E^{a})$.
\

Assume that $E$ is an elliptic curve with complex multiplication and recall that $\mb{K} = End_k(E) \otimes \mb{Q}$. In this case, we consider the above cycle complexes with $\mb{K}$-coefficients rather than $\mb{Q}$-coefficients. By a slight of abuse of notations, we still use the same symbol for cycle complexes and representations. 
\

Notation: For $i < 0$, $V^{\otimes i} = (V^{\vee})^{\otimes - i}$, where $V^{\vee}$ is the dual representation of $V$. We also use this notation for motives.
\

\begin{df} \label{E^{a,b}} %%%%%% 
Let $a, b$ be integers such that $a \geq b, a \geq 0$. 
\begin{itemize}
\item
For $i \in \mb{Z}$ and $E$ an elliptic curve without CM, we define:
$$\mathcal{E}_{a, b}^{i} = \widetilde{C}^{Alt , -}_{a - 2b - i}(z_{q.fin}(\mb{A}^{a - b}))(E^{a}) \otimes_{\mb{Q}[\Sigma_{a}]}  \V^{\otimes a}(b-a).$$
Here $\V$ is the fundamental representation of $GL_2$ and $\V^{\otimes a}(b-a) = \V^{\otimes a} \otimes det^{\otimes b-a}$.
\item
For $i \in \mb{Z}$ and $E$ an elliptic curve with CM, we define:
$$\mathcal{E}_{a, b}^{i} = \widetilde{C}^{Alt , -}_{a - 2b - i}(z_{q.fin}(\mb{A}^{a - b}))(E^{a}) \otimes_{B_{n, \mb{K}}} c_n(\V^{\otimes a})(b-a).$$
Here $\V$ is the fundamental representation of $GL_2 \otimes \mb{K}$, $\V^{\otimes a}(b-a) = \V^{\otimes a} \otimes det^{\otimes b-a}$. We recall that $B_{n, \mb{K}}$ and $c_n$ are defined in Remark \ref{Basic decom of elliptic motiveII}.
\end{itemize}
\end{df}

\begin{rmk} \label{ext product}
We first collect some facts.
\begin{enumerate}
\item
Using the external product of cycles, we define a map:
$$\widetilde{C}^{Alt , -}_{a - 2b - i_{1}}(z_{q.fin}(\mb{A}^{a-b}))(E^{a}) \otimes_{\mb{Q}} \widetilde{C}^{Alt , -}_{c - 2d - i_{2}}(z_{q.fin}(\mb{A}^{c-d}))(E^{c})$$
$$\xrightarrow{\boxtimes} \widetilde{C}^{Alt , -}_{a - 2b + c - 2d - i_{1} - i_{2}}(z_{q.fin}(\mb{A}^{a-b+c-d}))(E^{a+c}),$$
which sends $Z_{1} \otimes Z_{2}$ to $(-1)^{c(a - 2b - i_{1})}m(Z_{1} \times Z_{2})$.
%%%%% This is because box and E both has cohomological degree 1.
Here $m$ is the map
$$E^{a} \times \mb{A}^{a-b} \times \square^{a-2b-i_{1}} \times E^{c} \times \mb{A}^{c-d} \times \square^{c-2d-i_{2}}
\to E^{a+c} \times \mb{A}^{a-b+c-d} \times \square^{a-2b+c-2d-i_{1}-i_{2}},$$
changing the positions of the factors.
\item
We have the map of $GL_{2}$ representations: $\V^{a} \otimes \V^{c} \to \V^{a+c}$.
\end{enumerate}
\end{rmk}
In the following, we want to define a product map $\ml{E}^{*}_{a, b} \otimes \ml{E}^{*}_{c,d} \to \ml{E}^{*}_{a+c, b+d}$. For simplicity, we denote $\widetilde{C}^{Alt , -}_{a - 2b - i}(z_{q.fin}(\mb{A}^{a-b}))(E^{a})$ by $C_{a, b}^{i}$. Let us only explain the non-CM case. We have:
$$(C_{a,b}^{i} \otimes_{\mb{Q}[\Sigma_{a}]} \V^{a}(b-a)) \otimes_{\mb{Q}} (C_{c,d}^{j} \otimes_{\mb{Q}[\Sigma_{c}]} \V^{c}(d-c))
= (C_{a,b}^{i} \otimes_{\mb{Q}} C_{c,d}^{j}) \otimes_{\mb{Q}[\Sigma_{a} \times \Sigma_{c}]} (\V^{a}(b-a) \otimes_{\mb{Q}} \V^{c}(d-c)).$$
Using the external product of cycles and $GL_{2}$-representations (see Remark \ref{ext product}), we have a map:
$$(C_{a,b}^{i} \otimes_{\mb{Q}} C_{c,d}^{j}) \otimes_{\mb{Q}[\Sigma_{a} \times \Sigma_{c}]} (\V^{a}(b-a) \otimes_{\mb{Q}} \V^{c}(d-c))
\to C_{a+c,b+d}^{i+j} \otimes_{\mb{Q}[\Sigma_{a} \times \Sigma_{c}]} \V^{a+c}(b-a+d-c).$$
The injection of groups $\Sigma_{a} \times \Sigma_{c} \to \Sigma_{a+c}$ induces a map $\mb{Q}[\Sigma_{a} \times \Sigma_{c}] \to \mb{Q}[\Sigma_{a+c}]$.
Note that both $C_{a+c,b+d}^{i+j}$ and $\V^{a+c}(b-a+d-c)$ are $\mb{Q}[\Sigma_{a+c}]$ modules, and their $\mb{Q}[\Sigma_{a+c}]$ modules structures are compatible with their $\mb{Q}[\Sigma_{a} \times \Sigma_{c}]$ module structure coming from the respective external products. This tells us that there is a map: $$C_{a+c,b+d}^{i+j} \otimes_{\mb{Q}[\Sigma_{a} \times \Sigma_{c}]} \V^{a+c}(b-a+d-c) \to C_{a+c,b+d}^{i+j} \otimes_{\mb{Q}[\Sigma_{a+c}]} \V^{a+c}(b-a+d-c).$$
Putting these maps together, we get a map:
\begin{equation} \label{mul-str}
\begin{split}
&(C_{a,b}^{i} \otimes_{\mb{Q}[\Sigma_{a}]} \V^{a}(b-a)) \otimes_{\mb{Q}} (C_{c,d}^{j} \otimes_{\mb{Q}[\Sigma_{c}]} \V^{c}(d-c)) 
 \xrightarrow{\mu^{a,b}_{c,d}} C_{a+c,b+d}^{i+j} \otimes_{\mb{Q}[\Sigma_{a+c}]} \V^{a+c}(b-a+d-c).
\end{split}
\end{equation}
\begin{rmk}
Similarly, the above construction of multiplicative maps can be also applied to the case of an elliptic curve with CM.
\end{rmk}
\begin{rmk}
We will use the following identification in the next lemma.
\

Let $G$ be a finite group and $V$ (resp. $W$)  be a right (resp. left) $\mb{Q}[G]$ module, then:
$$V \otimes_{\mb{Q}[G]} W = (V \otimes_{\mb{Q}} W)_{G},$$
The right hand means the following: The right $\mb{Q}[G]$ module $V$ can be considered as a left module, $g \bullet v \doteq v \bullet g^{-1}$. $V \otimes_{\mb{Q}} W$ is considered as a left module. Then take the $G$ co-invariant part. This is even true for any algebra.
\end{rmk}
\begin{lem} \label{mul}
The product structure defined in (\ref{mul-str}) is associative and graded commutative. More precisely, $(-1)^{ij} \mu^{c,d}_{a,b} \circ \tau = \mu^{a,b}_{c,d}$, where $\tau$ is the map
$\ml{E}_{a,b}^{i} \otimes \ml{E}_{c,d}^{j} \xrightarrow{\tau} \ml{E}_{c,d}^{j} \otimes \ml{E}_{a,b}^{i}$ changing two factors.
\end{lem}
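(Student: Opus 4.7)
The plan is to decompose $\mu^{a,b}_{c,d}$ into three components—the external product of cycles with its Koszul sign $(-1)^{c(a-2b-i)}$, the canonical map $\V^{\otimes a}\otimes \V^{\otimes c} \to \V^{\otimes a+c}$ on $GL_2$-representations, and the descent from $\mb{Q}[\Sigma_a\times \Sigma_c]$-coinvariants to $\mb{Q}[\Sigma_{a+c}]$-coinvariants—and verify both associativity and graded commutativity piece by piece, keeping careful track of signs. I would treat the non-CM case in detail; the CM case reduces identically on the $c_n$-summands after substituting $B_{n,\mb{K}}$-coinvariants and $\mb{T}_{\mb{K}}$-representations in place of $\mb{Q}[\Sigma_n]$-coinvariants and $GL_2$-representations.

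Associativity is the easier half. External products of cycles on the ambient scheme $E^{\bullet}\times \mb{A}^{\bullet}\times \square^{\bullet}$ are strictly associative up to canonical reorderings of coordinates; these reorderings become equalities once one passes to the symmetric sections $\widetilde{C}$ in the $\mb{A}$-directions and the $\Sigma_{a+c+e}$-coinvariants in the $E$-directions. The tensor product on $\V^{\otimes \bullet}$ is strictly associative, and the two chains of inclusions $\Sigma_a\times\Sigma_c\times\Sigma_e \hookrightarrow \Sigma_{a+c}\times\Sigma_e \hookrightarrow \Sigma_{a+c+e}$ and $\Sigma_a\times\Sigma_c\times\Sigma_e \hookrightarrow \Sigma_a\times\Sigma_{c+e} \hookrightarrow \Sigma_{a+c+e}$ agree. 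The two bracketings of the Koszul prefactor
\[
(-1)^{c(a-2b-i)} \cdot (-1)^{e((a+c)-2(b+d)-(i+j))} = (-1)^{(c+e)(a-2b-i)} \cdot (-1)^{e(c-2d-j)}
\]
match by a direct algebraic identity, which completes associativity.

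For graded commutativity, set $p = a-2b-i$ and $q = c-2d-j$. The swap $\tau$ factors, on the ambient scheme, as three independent permutations: $\tau_E\in \Sigma_{a+c}$ shuffling the $E$-factors (sign $(-1)^{ac}$), $\tau_\mb{A}\in \Sigma_{a-b+c-d}$ shuffling the $\mb{A}$-factors, and $\tau_\square\in \Sigma_{p+q}$ shuffling the $\square$-factors (sign $(-1)^{pq}$). On the representation side, $\tau$ acts by the symmetric braiding of $\V^{\otimes a}\otimes \V^{\otimes c}$, which corresponds exactly to the action of $\tau_E$ on $\V^{\otimes a+c}$ under the canonical identification. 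The symmetrization in $\widetilde{C}$ absorbs $\tau_\mb{A}$; the alternating property of $C^{Alt}$ absorbs $\tau_\square$ while contributing a sign $(-1)^{pq}$; and the $\mb{Q}[\Sigma_{a+c}]$-coinvariance together with the convention $Z\bullet \sigma = \mathrm{sgn}(\sigma)\sigma^{-1}(Z)$ absorbs $\tau_E$ while contributing the sign $\mathrm{sgn}(\tau_E) = (-1)^{ac}$. Combining these contributions with the difference of Koszul prefactors $(-1)^{cp + aq}$ between the two product orderings, and using $cp \equiv ac + ci$, $aq \equiv ac + aj$, $pq \equiv ac + aj + ci + ij \pmod{2}$, the total sign telescopes to $(-1)^{ij}$, exactly as required.

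The main obstacle is precisely this sign accounting, in particular pairing the convention $Z\bullet \sigma = \mathrm{sgn}(\sigma)\sigma^{-1}(Z)$ against the symmetric braiding on $\V^{\otimes \bullet}$: the extra sign $\mathrm{sgn}(\tau_E)=(-1)^{ac}$ that it produces upon descent to $\mb{Q}[\Sigma_{a+c}]$-coinvariants is precisely what cancels the $(-1)^{ac}$ hidden in the Koszul prefactor difference. All other ingredients—external product of cycles, alternating complex, and symmetrization in the $\mb{A}$-coordinates—behave completely functorially and contribute no surprises.
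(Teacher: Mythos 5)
Your proposal is correct and follows essentially the same route as the paper: decompose the swap into the $E$-block permutation (absorbed, with sign $(-1)^{ac}$, by the $\mathbb{Q}[\Sigma_{a+c}]$-coinvariants via $Z\bullet\sigma=\mathrm{sgn}(\sigma)\sigma^{-1}(Z)$), the $\square$-block permutation (sign $(-1)^{pq}$ from alternation), and the $\mathbb{A}$-block permutation (absorbed by symmetrization), then combine with the two Koszul prefactors to get the net sign $(-1)^{ij}$; your sign bookkeeping matches the paper's total $(-1)^{ac+ij}$ before descent to coinvariants. The associativity check, which the paper only states, is also handled correctly, so no gaps.
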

\begin{proof}
For the associativity part, it's the direct result of the associativity of external products of cycles and representations and compatibility of actions of symmetric groups.
\

For the commutativity part, we need to check that $(-1)^{ij} \mu^{c,d}_{a,b} \circ \tau = \mu^{a,b}_{c,d}$. Take $Z_{1} \otimes W_{1}$, where $Z_{1} \in C_{a,b}^{i}$, $W_{1} \in V^{a}(b-a)$. Similarly take $Z_{2} \otimes W_{2}$, where $Z_{2} \in C_{c,d}^{j}$, $W_{2} \in V^{c}(d-c)$. Let $\sigma$ be the element in $\mb{Q}[\Sigma_{a+c}]$ which permutes the first $a$ elements with the last $c$ elements. Let $\delta$ act on $\mb{A}^{a-b} \times \square^{a - 2b - i} \times \mb{A}^{c-d} \times \square^{c - 2d - j}$ by permuting $\mb{A}^{a-b}$ and $\mb{A}^{c-d}$, and permuting $\square^{a-2b-i}$ and $\square^{c-2d-j}$.
\

Also use $\boxtimes$ to denote the external product of modules. 
\

Then, in $C_{a+c, b+d}^{i+j} \otimes_{\mb{Q}} \V^{a+c}(b-a+d-c)$, we have:
\begin{equation} \nonumber
\begin{split}
&\delta \sigma ((Z_{1} \boxtimes Z_{2}) \otimes_{\mb{Q}} (W_{1} \boxtimes W_{2}))  \\
=\ & \delta (\sigma (Z_{1} \boxtimes Z_{2})) \otimes_{\mb{Q}} (\sigma(W_{1} \boxtimes W_{2})) \\
= \ &(-1)^{c(a-i)+a(c-j)+(a - i)(c-j)}(Z_{2} \boxtimes Z_{1}) \otimes_{\mb{Q}} (W_{2} \boxtimes W_{1}) \\
= \ & (-1)^{c(a-i)+a(c-j)+(a - i)(c-j)}(Z_{2} \boxtimes Z_{1}) \otimes_{\mb{Q}} (W_{2} \boxtimes W_{1}) \\
=\  & (-1)^{ac+ij}(Z_{2} \boxtimes Z_{1}) \otimes_{\mb{Q}} (W_{2} \boxtimes W_{1}).
\end{split}
\end{equation}
Here we use $\delta(Z) = sgn(\delta) Z$.
This implies that the image of
$$\mu^{a,b}_{c,d}((Z_{1} \otimes W_{1}) \otimes_{\mb{Q}} (Z_{2} \otimes W_{2})) - (-1)^{ij}(\mu^{c,d}_{a,b} \circ \tau)((Z_{1} \otimes W_{1}) \otimes_{\mb{Q}} (Z_{2} \otimes W_{2}))$$
in $\ml{E}_{a+c,b+d}^{i+j}$ is the same as
\begin{equation} \nonumber
\mu^{a,b}_{c,d}((Z_{1} \otimes W_{1}) \otimes_{\mb{Q}} (Z_{2} \otimes W_{2})) - (-1)^{ac} \sigma (\mu^{c,d}_{a,b}((Z_{1} \otimes W_{1}) \otimes_{\mb{Q}} (Z_{2} \otimes W_{2}))),
\end{equation}
i.e.,
\begin{equation} \nonumber
\mu^{a,b}_{c,d}((Z_{1} \otimes W_{1}) \otimes_{\mb{Q}} (Z_{2} \otimes W_{2})) -  \sigma \bullet (\mu^{c,d}_{a,b}((Z_{1} \otimes W_{1}) \otimes_{\mb{Q}} (Z_{2} \otimes W_{2}))),
\end{equation}
which is zero in $\ml{E}_{a+c,b+d}^{i+j}$. This implies the graded commutativity.
\end{proof}
For simplicity, we denote the multiplication $\mu^{a,b}_{c,d}$ by $\bullet$.
\begin{lem} \label{diff}
Given $Z_{1} \otimes W_{1} \in C_{a,b}^{i} \otimes_{\mb{Q}[\Sigma_{a}]} \V^{a}(b-a)$, $Z_{2} \otimes W_{2} \in C_{c,d}^{j} \otimes_{\mb{Q}[\Sigma_{b}]} \V^{c}(d-c)$, then we have:
$$d^{i+j}_{a+c,b+d}((Z_{1} \otimes W_{1}) \bullet (Z_{2} \otimes W_{2}))$$
$$= (d^{i}_{a,b}(Z_{1} \otimes W_{1})) \bullet (Z_{2} \otimes W_{2}) +  (-1)^{i}(Z_{1} \otimes W_{1}) \bullet(d^{j}_{c,d}(Z_{2} \otimes W_{2})),$$
where $d^{i}_{a,b}$ is the map $C_{a, b}^{i} \otimes_{\mb{Q}[\Sigma_{a}]} \V^{a}(b-a) \xrightarrow{d \otimes id} C_{a, b}^{i+1} \otimes_{\mb{Q}[\Sigma_{a}]} \V^{a}(b-a)$.
\end{lem}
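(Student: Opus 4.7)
The plan is to reduce the identity to the Leibniz rule for the external product on the underlying cubical cycle complexes and then to verify that the sign convention chosen for $\bullet$ in \eqref{mul-str} converts the natural chain-degree Koszul sign into the cohomological-degree Koszul sign $(-1)^{i}$. Since the $GL_2$-representation factors $W_1, W_2$ carry no differential and both the tensor product over $\mb{Q}[\Sigma_{a} \times \Sigma_{c}]$ and the induction along $\Sigma_a \times \Sigma_c \hookrightarrow \Sigma_{a+c}$ are $d$-equivariant (they are applied after the cycle-theoretic product), the whole identity reduces to a statement about the cycle factor $m(Z_1 \times Z_2)$ decorated with the prescribed sign $(-1)^{c(a-2b-i_1)}$.

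First I would expand both sides using the definition of $\bullet$. Writing $p_1 = a - 2b - i_1$ and $p_2 = c - 2d - i_2$ for the chain degrees in the cubical complexes, the external product
\[
Z_1 \boxtimes Z_2 \;=\; (-1)^{c p_1}\, m(Z_1 \times Z_2)
\]
is a morphism of cubical chain complexes in the sense that the standard cubical Leibniz rule
\[
d\bigl(m(Z_1 \times Z_2)\bigr) \;=\; m(dZ_1 \times Z_2) + (-1)^{p_1}\, m(Z_1 \times dZ_2)
\]
holds; this is a direct consequence of the explicit formula for the cubical differential recalled in Section \ref{FS}, applied to a product of cubes (the face maps $\iota_{j,\epsilon}$ with $j \le p_1$ act on the first factor, and those with $j > p_1$ act on the second factor with an extra sign $(-1)^{p_1}$). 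Multiplying through by $(-1)^{cp_1}$ yields the left-hand side of the stated formula.

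Next I would compute the right-hand side from the definitions:
$(d^{i_1}_{a,b} Z_1) \boxtimes Z_2$ carries the prefactor $(-1)^{c(p_1 - 1)}$ because $dZ_1$ has chain degree $p_1 - 1$, whereas $Z_1 \boxtimes (d^{i_2}_{c,d} Z_2)$ still carries $(-1)^{c p_1}$ because only the second argument changes. The main sign-bookkeeping step is to reconcile the factor $(-1)^{p_1}$ coming from the cubical Leibniz rule with the desired Koszul sign $(-1)^{i_1}$ in cohomological grading, using that $p_1 \equiv a - i_1 \pmod 2$ and that the auxiliary shifts induced by moving $dZ_1$ through the sign $(-1)^{c p_1}$ together with the reordering $\Sigma_a \times \Sigma_c \hookrightarrow \Sigma_{a+c}$ absorb the remaining parity discrepancy; here one uses the antisymmetry condition $g(Z) = \rho(g) Z$ of $C^{Alt,-}_*$ and the quotient by $\Sigma_{a+c}$ (as in the commutativity argument of Lemma \ref{mul}) to rewrite the offending sign as the action of a permutation already quotiented out. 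Collecting these terms shows that the two sides agree in $\mathcal{E}^{i_1+i_2}_{a+c,b+d}$.

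The main (and really only) obstacle is the sign bookkeeping described in the previous paragraph. Once the cubical Leibniz rule is in hand, everything is formal; the delicate point is to use the $(\mb{Z}/2\mb{Z})^{a+c}$-antisymmetry and the $\Sigma_{a+c}$-invariance to convert the chain-degree Koszul sign $(-1)^{p_1}$ into the cohomological-degree sign $(-1)^{i_1}$ uniformly, without parity restrictions on $a$ or $c$. The CM case is handled by the same argument after replacing $\mb{Q}[\Sigma_a]$ by $B_{n,\mb{K}}$ throughout; since the tensor with $c_n(\V^{\otimes a})(b-a)$ still carries no differential, no new sign issues arise.
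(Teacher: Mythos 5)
Your reduction is set up correctly: the representation factors carry no differential, the passages to coinvariants commute with $d\otimes id$, so the lemma is equivalent to a Leibniz identity for the signed external product $Z_1\boxtimes Z_2=(-1)^{c(a-2b-i)}m(Z_1\times Z_2)$, and the cubical Leibniz rule $d\bigl(m(Z_1\times Z_2)\bigr)=m(dZ_1\times Z_2)+(-1)^{p_1}m(Z_1\times dZ_2)$ with $p_1=a-2b-i$ is right. The gap is the final step, where you claim the leftover signs are ``absorbed'' by the $(\mb{Z}/2\mb{Z})^{a+c}$-antisymmetry and the quotient over $\mb{Q}[\Sigma_{a+c}]$. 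Carrying your own bookkeeping to the end, the first term compares $(-1)^{cp_1}$ with $(-1)^{c(p_1-1)}$, a discrepancy of $(-1)^{c}$, and the second compares $(-1)^{p_1}$ with $(-1)^{i}$, a discrepancy of $(-1)^{a}$ (since $p_1\equiv a+i \pmod 2$). Neither structure you invoke can remove these: $g(Z)=\rho(g)Z$ is an equivariance property of the cycle, not a relation imposed in a quotient, and the coinvariants over $\mb{Q}[\Sigma_{a+c}]$ only identify $Z\bullet\sigma\otimes W$ with $Z\otimes\sigma W$; any mechanism identifying an element with $(-1)^{a}$ or $(-1)^{c}$ times itself would force that element to vanish in these $\mb{Q}$-vector spaces. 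In Lemma \ref{mul} the block-swap permutation genuinely exchanges the two factors, which is why a sign can be traded there; here $dZ_1\boxtimes Z_2$ and $Z_1\boxtimes dZ_2$ are not related to themselves or to each other by any such permutation, so no ``uniform, parity-free'' conversion of $(-1)^{p_1}$ into $(-1)^{i}$ is available along these lines.

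The correct resolution is to fix the Koszul sign rather than appeal to the quotient. If one takes $Z_1\boxtimes Z_2=(-1)^{a(c-2d-j)}m(Z_1\times Z_2)$ (number of elliptic factors of the \emph{first} argument times the chain degree of the \emph{second}), then the first term matches on the nose, the second produces $(-1)^{i+a}(-1)^{a(c-2d-j)}=(-1)^{p_1}(-1)^{a(c-2d-j)}$ as required, and the graded commutativity of Lemma \ref{mul} is unaffected, since the total exponent $c(a-i)+a(c-j)+(a-i)(c-j)\equiv ac+ij$ appearing there is symmetric between the two conventions. With the sign literally as printed in Remark \ref{ext product}, the direct check yields $d(x\bullet y)=(-1)^{c}(dx)\bullet y+(-1)^{a+i}x\bullet(dy)$, so the identity of Lemma \ref{diff} fails already for $a$ or $c$ odd (e.g.\ $a=1$, $c=0$). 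Your proof should therefore either adopt and verify the chain-map sign convention for $\boxtimes$ (treating the printed exponent as a transposition slip) or record the extra signs; as written, the absorption argument is the one step that actually needed proof, and it does not go through.
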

\begin{proof}
One may check by definition.
\end{proof}
Assume that $E$ is an elliptic curve without CM. By Lemma \ref{mul} and Lemma \ref{diff}, our products
$$\ml{E}^{*}_{a, b} \otimes \ml{E}^{*}_{c,d} \to \ml{E}^{*}_{a+c, b+d}$$
give $\oplus_{a \geq b \geq 0}\ml{E}^{*}_{a,b}$ the structure of a bi-graded commutative differential graded algebra in $GL_2$-representations.
\begin{rmk}
If $E$ is an elliptic curve with CM, then the representations in each $\ml{E}^*_{a,b}$ are viewed as representations over $\mb{G}_{m, \mb{K}} \times \mb{G}_{m, \mb{K}} = T_{\mb{K}}$. The determinant representation of $T_{\mb{K}}$ means the pullback of the determinant representation of $GL_2$ along the embedding of $T_{\mb{K}} \to GL_2$. Under the action of the Galois group $\mathbf{Gal}(\mb{K}/\mb{Q})$, we have the decomposition $\V = V \oplus \bar{V}$. In this case, the analogue of Lemma \ref{mul} and Lemma \ref{diff} hold, i.e., the products 
$$\ml{E}^{*}_{a, b} \otimes \ml{E}^{*}_{c,d} \to \ml{E}^{*}_{a+c, b+d}$$
give $\oplus_{a \geq b \geq 0}\ml{E}^{*}_{a,b}$ the structure of a bi-graded cdga in $T_{\mb{K}}$-representations.
\end{rmk}
\begin{exe} \label{E21}
Assume that $E$ is an elliptic curve without CM in this example. 
Let us use Example \ref{ex for van} to compute $\ml{E}^{*}_{2,1}$. According to our definition, we have:
$$\ml{E}^{*}_{2,1} = \widetilde{C}^{Alt , -}_{-*}(z_{q.fin}(\mb{A}^{1}))(E^{2}) \otimes_{\mb{Q}[\Sigma_{2}]} \V^{\otimes 2}(-1).$$
Notice that as a $GL_{2}$ representation, $V^{\otimes 2}(-1)$ decomposes as the direct sum of $Sym^{2}\V(-1)$ and $\mb{Q}$, both factors with multiplicity one.  Computing the corresponding cycle complexes, we get:
$$\ml{E}^{*}_{2,1} = (\widetilde{C}^{Alt , -}_{-*}(z_{q.fin}(\mb{A}^{1}))(E^{2}))^{sym} \otimes_{\mb{Q}} Sym^{2}\V(-1) 
\oplus (\widetilde{C}^{Alt , -}_{-*}(z_{q.fin}(\mb{A}^{1}))(E^{2}))^{alt} \otimes_{\mb{Q}} \mb{Q}.$$
%%% Hom(Sym^{2}M_{1}(E), \mb{Q}) = 0
Using Example \ref{ex for van}, we obtain that the first term of right hand side is quasi-isomorphic to zero. Similarly the second term is quasi-isomorphic to the trivial $GL_{2}$ representation, generated by a cycle of codimension one in $E^{2}$. If we denote the diagonal (resp. anti-diagonal) of $E \times E$ by $\Delta^{+}$ (resp. $\Delta^{-}$), then we can take this generator to be the cycle $\frac{1}{2}(\Delta^{+} - \Delta^{-})$.
\end{exe}
\begin{exe} \label{E21cm}
Assume that $E$ is an elliptic curve with CM now. Then by the above discussion, we have:
$$\ml{E}^{*}_{2,1} = (\widetilde{C}^{Alt , -}_{-*}(z_{q.fin}(\mb{A}^{1}))(E^{2}))^{sym} \otimes_{\mb{K}} (V \otimes \bar{V}(-1)).$$
Notice that this complex is quasi-isomorphic to the trivial representation $\mb{K}$, generated by the cycle $\frac{1}{2}(\Gamma_{\iota} - \Gamma_{\iota \circ (-1)})$, where $\Gamma_{\iota}$ denotes the graph of the complex multiplication $\iota$.
\end{exe}

(\textbf{Non-CM} case) Let $E$ be an elliptic curve without multiplication. We define a map:
\begin{equation} \label{colimit process}
\ml{E}^{*}_{a, b} \xrightarrow{\eta} \ml{E}^{*}_{a+2, b+1}
\end{equation}
by mapping $Z \otimes_{\mb{Q}} W \in C_{a,b}^{i} \otimes_{\Sigma_{a}} \V^{a}(b-a)$ to $(Z \times \frac{1}{2}(\Delta^{+} - \Delta^{-})) \otimes_{\mb{Q}} \phi(W)$, where $\phi$ is the composition of maps between $GL_{2}$ representations $\V^{a}(b-a) \to \V^{a}(b-a) \otimes_{\mb{Q}} \V^{2}(-1) \xrightarrow{\cong} \V^{a+2}(b-a-1)$. The first map is defined in the following way. Because $\V^{2}(-1) \cong Sym^{2}\V(-1) \oplus \mb{Q}$ as $GL_2$ representations, we have a natural injective map:
$$\V^{a}(b-a) = \V^{a}(b-a) \otimes \mb{Q} \to \V^{a}(b-a) \otimes \V^2(-1),$$
sending $1 \in \mb{Q}$ to $1 \in \mb{Q} \subset \V^2(-1)$.

\begin{rmk} \label{existence of beta}
Using the computation in Example \ref{E21} and Example \ref{E21cm}, the above definition is just the composition of maps:
$$\ml{E}_{a,b}^{*} \to \ml{E}_{a, b}^* \otimes_{\mb{Q}}((\widetilde{C}^{Alt , -}_{-*}(z_{q.fin}(\mb{A}^{1}))(E^{2}))^{alt} \otimes_{\mb{Q}} \mb{Q}) \to \ml{E}_{a,b}^{*} \otimes \ml{E}_{2,1}^{*} \xrightarrow{\bullet} \ml{E}^{*}_{a+2,b+1}.$$
\end{rmk}

(\textbf{CM} case) Assume that $E$ is with CM. Notice that $Sym^2 \V$ is not irreducible as an $T_{\mb{K}}$-representation. In fact, $Sym^2 \V$ is decomposed as a direct sum of $V^{\otimes 2}$, $\bar{V}^{\otimes 2}$ and $V \otimes \bar{V}$. As non-CM case, using Example \ref{E21cm} again together with
\begin{equation} \nonumber
\ml{E}^*_{a, b} \to \ml{E}^*_{a, b} \otimes ((\widetilde{C}^{Alt , -}_{-*}(z_{q.fin}(\mb{A}^{1}))(E^{2}))^{sym} \otimes_{\mb{K}} (V \otimes \bar{V}(-1))
\to \ml{E}_{a,b}^{*} \otimes \ml{E}_{2,1}^{*} \xrightarrow{\bullet} \ml{E}_{a+2,b+1}^{*},
\end{equation}
we get a map:
$$\tau: \ml{E}^*_{a, b} \to \ml{E}^*_{a+2, b+1}.$$

\begin{df}  \label{df of elliptic-alg}
Given $a \in \mb{Z}$, 
\begin{itemize}
\item
for an elliptic curve without CM, we define: $$\ml{E}_{a}^{*} =   \underrightarrow{lim}_{i \geq -a} \  \ml{E}^{*}_{-a+2i, -a+i}$$
where the colimits are taken over the map $\eta$.
\item
for an elliptic curve with CM, we define:$$\ml{E}_{a}^{*} =   \underrightarrow{lim}_{i \geq -a} \  \ml{E}^{*}_{-a+2i, -a+i},$$
where the colimits are taken over the map $\tau$.
\end{itemize}
\end{df}

\begin{rmk}
As $GL_{2}$ representations, every term of the complex $\ml{E}_{a}^{*}$ has pure Adams weight $a$. The reason for the process of taking colimit is to kill the infinite repeated information. Notice each irreducible $GL_{2}$ representation appear infinite times for the representation part of $\{ \ml{E}^*_{a,b} \}$, which take the same cycle complexes. For example, in $\ml{E}^{*}_{a, b}$ and $\ml{E}^{*}_{a+2, b+1}$, the $Sym^a \V(b-a)$-isotypical pieces appear in these both complexes. We will see these facts later in Corollary \ref{cycle algebra GL_2 identification}. The same thing also holds for the CM case.
\end{rmk}
\begin{df} \label{df of cycle algebra}
Define:
$$\ml{E}^{*} = \mb{Q} \oplus \bigoplus_{a \geq 1}\ml{E}_{a}^{*}.$$
and 
$$\ml{E}_{ell}^{*} = \bigoplus_{a \in \mb{Z}} \ml{E}^{*}_{a}.$$    %%% algebra stucture need the associativity of \ml{E}^{a,b}
\end{df}
\begin{rmk}
The products on $\ml{E}^{*}_{a, b}$ descend to products on $\ml{E}^{*}$ and $\ml{E}_{ell}^{*}$. We take the case of an elliptic curve without CM as an example. By the construction of the multiplication map, we have:
$$\ml{E}^{*}_{a + 2i,\ i} \otimes \ml{E}^{*}_{b+2j,\ j} \to \ml{E}^{*}_{a+b+2i+2j , \ i+j},$$
and the commutative diagram

\begin{xy}
(210,0)*+{\ml{E}^{*}_{a+2i, \ i} \otimes \ml{E}^{*}_{b+2j,\ j}}="v1";
(260,0)*+{\ml{E}^{*}_{a+b+2i+2j , \ i+j}}="v2";
(210,-20)*+{\ml{E}^{*}_{a+2i+2,\ i+1} \otimes \ml{E}^{*}_{b+2j,\ j}}="v3";
(260,-20)*+{\ml{E}^{*}_{a+b+2i+2j+2, \ i+j+1}.}="v4";
{\ar@{->} "v1";"v2"};{\ar@{->}^{\eta \otimes id} "v1";"v3"};{\ar@{->} "v3";"v4"};{\ar@{->}^{\eta} "v2";"v4"};
\end{xy}
\

Fix integers $b, j$. Using these diagrams for $i$ varying, we get a map:
$$\ml{E}^{*}_{a} \otimes \ml{E}^{*}_{b+2j, \ j} \to \ml{E}^{*}_{a+b}.$$
We also have the following commutative diagrams:
\begin{center}
\begin{xy}
(220,0)*+{\ml{E}^{*}_{a} \otimes \ml{E}^{*}_{b+2j,\ j}}="v1";(260,0)*+{\ml{E}^{*}_{b}}="v2";
(220,-20)*+{\ml{E}^{*}_{a} \otimes \ml{E}^{*}_{b+2j+2,\ j+1}}="v3";
{\ar@{->} "v1";"v2"};{\ar@{->}^{id \otimes \eta} "v1";"v3"};{\ar@{->} "v3";"v2"};
\end{xy}
\end{center}
and
\begin{center}
\begin{xy}
(210,0)*+{\ml{E}^{*}_{a+2i, \ i} \otimes \ml{E}^{*}_{2b+j,\ b}}="v1";(260,0)*+{\ml{E}^{*}_{a+b+2i+2j , \ i+j}}="v2";
(210,-20)*+{\ml{E}^{*}_{a+2i, \ i} \otimes \ml{E}^{*}_{2b+j+2,\ b+1}}="v3";(260,-20)*+{\ml{E}^{*}_{a+b+2i+2j+2, \ i+j+1}.}="v4";
{\ar@{->} "v1";"v2"};{\ar@{->}^{id \otimes \eta} "v1";"v3"};{\ar@{->} "v3";"v4"};{\ar@{->}^{\eta} "v2";"v4"};
\end{xy}
\end{center}
Then for $i, j \in \mb{Z}$, we get a map $\ml{E}^{*}_{i} \otimes \ml{E}^{*}_{j} \to \ml{E}^{*}_{i+j}$, which induces product structures on $\ml{E}$ and $\ml{E}_{ell}^{*}$. By Lemma \ref{mul} and Lemma \ref{diff}, these give $\ml{E}$ and $\ml{E}_{ell}^{*}$ the structures of commutative differential graded algebra objects in the category of $GL_2$ representations.
\end{rmk}

\section{Computations for the non-CM case} \label{sec-comp}
\begin{lem} \label{Comp of cycle} There are isomorphisms:
\begin{equation} \nonumber
\begin{split}
& H^{i}(\widetilde{C}^{Alt , -}_{a - 2b - *}(z_{q.fin}(\mb{A}^{a-b}))(E^{a})) \cong  Hom_{\DM_{gm}(k, \mb{Q})}((M_{1}(E))^{\otimes a}, \mb{Q}(a-b)[i]),
\end{split}
\end{equation}
for $i \in \mb{Z}$.
\end{lem}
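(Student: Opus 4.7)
The plan is to reduce the left-hand side to motivic cohomology of $E^a$ at twist $a-b$, and then isolate the summand corresponding to $M_1(E)^{\otimes a}$ using the action of the inversions on $E^a$. First I would recall that the Friedlander--Suslin complex $\mb{Q}^{SF}(a-b) = C_*z_{q.fin}(\mb{A}^{a-b})[-2(a-b)]$ with $\mb{Q}$-coefficients computes motivic cohomology of smooth schemes, and that by Theorem~\ref{Comp alt} the cubical chain complex $C^{cb}_*$ and its alternating sub-complex $C^{Alt}_*$ are quasi-isomorphic after tensoring with $\mb{Q}$. Matching indices, cohomological degree $i$ of $C^{Alt}_{a-2b-*}(z_{q.fin}(\mb{A}^{a-b}))(E^a)$ corresponds to cubical level $a-2b-i$, and after the Friedlander--Suslin shift $[-2(a-b)]$ this corresponds to motivic cohomological degree $i+a$. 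Therefore
\[
H^i\bigl(C^{Alt}_{a-2b-*}(z_{q.fin}(\mb{A}^{a-b}))(E^a)\bigr) \cong \mathrm{Hom}_{\DM_{gm}(k,\mb{Q})}\bigl(M(E^a), \mb{Q}(a-b)[i+a]\bigr).
\]

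Next I would handle the ``$-$'' part using the Chow--K\"unneth decomposition $M(E) = \mb{Q} \oplus M_1(E)[1] \oplus \mb{Q}(1)[2]$ and the fact that the inversion $[-1]_E$ acts as the identity on the degree-$0$ and degree-$2$ summands and as $-1$ on $M_1(E)[1]$. Consequently, the $(\mb{Z}/2\mb{Z})^a$-sign-eigenspace of $M(E^a) = M(E)^{\otimes a}$ under the inversions on the $a$ factors is exactly $M_1(E)^{\otimes a}[a]$. Since $|(\mb{Z}/2\mb{Z})^a|$ is invertible in $\mb{Q}$, the averaging projector identifies the sign-eigenspace of the cycle complex with the sign-eigenspace of its cohomology, giving
\[
H^i\bigl(C^{Alt,-}_{a-2b-*}(z_{q.fin}(\mb{A}^{a-b}))(E^a)\bigr) \cong \mathrm{Hom}_{\DM_{gm}(k,\mb{Q})}\bigl(M_1(E)^{\otimes a}[a], \mb{Q}(a-b)[i+a]\bigr),
\]
which simplifies to $\mathrm{Hom}_{\DM_{gm}(k,\mb{Q})}(M_1(E)^{\otimes a}, \mb{Q}(a-b)[i])$ after canceling the shift $[a]$ on source and target.

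Finally, for the symmetric sections $\widetilde{(\cdot)}$ I would observe that the action of $\Sigma_{a-b}$ induced by permuting the coordinates of $\mb{A}^{a-b}$ corresponds, under the Friedlander--Suslin identification, to the natural permutation action of $\Sigma_{a-b}$ on $\mb{Q}(a-b) = \mb{Q}(1)^{\otimes(a-b)}$ in $\DM$. This action is trivial since the swap on $\mb{Q}(1)^{\otimes 2}$ is $+1$ (Voevodsky's orientation / cancellation), so with $\mb{Q}$-coefficients the averaging projector over $\Sigma_{a-b}$ is a quasi-isomorphism from the full complex to its symmetric sub-complex, yielding the claimed isomorphism.

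The main obstacle is the careful bookkeeping of indexing conventions between the cubical level $a-2b-i$, the Friedlander--Suslin shift $[-2(a-b)]$, and the motivic cohomological degree, together with verifying that the $\Sigma_{a-b}$-permutation action on $\mb{Q}(a-b)$ in $\DM$ is trivial so that the passage to $\widetilde{(\cdot)}$ does not change cohomology; the hardest ingredient is this last triviality of permutations on Tate twists.
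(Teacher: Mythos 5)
Your overall strategy is the same as the paper's: identify the cohomology of the cycle complex with motivic cohomology of $E^a$ in degree $a+i$ and twist $a-b$, then use the $(\mb{Z}/2\mb{Z})^a$-sign projector (inversion acts by $(-1)^j$ on $h^j(E)$, so the sign eigenspace of $M(E)^{\otimes a}$ is $M_1(E)^{\otimes a}[a]$) to cancel the shift and land on $Hom_{\DM_{gm}(k,\mb{Q})}(M_1(E)^{\otimes a},\mb{Q}(a-b)[i])$; your index bookkeeping ($p=a+i$) is correct, and your explicit treatment of the alternating-vs-cubical reduction and of the $\Sigma_{a-b}$-symmetric sections (permutations act trivially on $\mb{Q}(a-b)$ in $\DM$, so symmetrizing with $\mb{Q}$-coefficients is a quasi-isomorphism) is actually more detailed than the paper, which only remarks that the $(\mb{Z}/2\mb{Z})^a$-actions are compatible with its chain of isomorphisms.

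The gap is in your first step, which you dispose of with ``recall that the Friedlander--Suslin complex computes motivic cohomology of smooth schemes.'' What you need is that the \emph{naive} cohomology of the complex of sections of $C_*z_{q.fin}(\mb{A}^{a-b})$ over the smooth \emph{projective} variety $E^a$ computes $Hom_{\DM_{gm}(k,\mb{Q})}(M(E^a),\mb{Q}(a-b)[i+a])$. The standard form of the Friedlander--Suslin comparison is a Zariski/Nisnevich hypercohomology statement (naive sections suffice for smooth affine schemes), and for a non-affine $X$ naive cohomology of sections of a motivic complex need not agree with hypercohomology. Making this step work for $E^a$ is exactly the content of the paper's proof: it invokes the bivariant cycle cohomology machinery of \cite{FSV} --- the duality results converting sections over $E^a$ into relative cycles over $Spec(k)$ (where naive and hypercohomology trivially agree), homotopy invariance to remove the $\mb{A}^{a-b}$-factor, and the comparison of Borel--Moore motivic homology of the smooth projective $E^a$ with its motivic cohomology. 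So your argument is correct in outline, but as written the ``recalled'' fact is precisely the nontrivial input; you should either cite those results of \cite{FSV} (as the paper does) or reproduce the duality argument, and also note that the identification must be taken equivariantly for the $(\mb{Z}/2\mb{Z})^a$- and $\Sigma_{a-b}$-actions for your projector arguments to transfer across it.
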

\begin{proof}
For the proof, we need use bivariant cycle cohomology developed in Chapter 4 in \cite{FSV} and we also use the notations in op.cit. For these definitions, we refer to \cite{FSV}. Via Proposition 5.8, Theorem 8.3 (the homotopy invariance) in \cite{FSV} and Theorem \ref{Comp alt}, we have:
\begin{equation}
\begin{split}
&H^{i}(\widetilde{C}^{Alt}_{a - 2b - *}(z_{q.fin}(\mb{A}^{a-b}))(E^{a})) \cong H^{i}(C_{a - 2b - *}(z_{equi}(E^a, \mb{A}^{a-b}, 0))(Spec(k))) \\
\cong &H^{i}(C_{a - 2b - *}(z_{equi}(E^a \times \mb{A}^{a-b}, a))(Spec(k))) \cong H^{i}(C_{a - 2b - *}(z_{equi}(E^a, b))(Spec(k)))\\
\cong & A_{b, 2a-2b-i}(E^a) = H^{BM}_{2a-i}(E^a, \mb{Q}(b)) \cong H^{a+i}(E^a, \mb{Q}(a-b)) \\
= & Hom_{\DM_{gm}(k, \mb{Q})}((M(E))^{\otimes a}, \mb{Q}(a-b)[a+i])
\end{split}
\end{equation}
In the third of the above isomoprhisms, we use the comparison between the Borel-Moore motivic homology (defined there by bivariant cycle cohomology) and motivic cohomology. See this statement before Remark 9.5 in op. cit.
\

Notice that the action of $(\mb{Z}/2)^{a}$ on these groups induced by the action on $E^a$ are compatible under the above isomorphisms. Therefore we have:
$$H^{i}(\widetilde{C}^{Alt , -}_{a - 2b - *}(z_{q.fin}(\mb{A}^{a-b}))(E^{a})) \cong Hom_{\DM_{gm}(k, \mb{Q})}((M_{1}(E))^{\otimes a}, \mb{Q}(a-b)[i]).$$
\end{proof}
\begin{lem} \label{Comp of cycle algebra with motive}
The external product, which is defined on the cohomology groups of the cycle complex $\widetilde{C}^{Alt , -}_{a - 2b - *}(z_{q.fin}(\mb{A}^{a-b}))(E^{a})$,  is compatible with the external product on Hom groups $$Hom_{\DM_{gm}(k, \mb{Q})}((M_{1}(E))^{\otimes a}, \mb{Q}(a-b)[i]),$$ defined in the following way:
\begin{equation}
\begin{split}
&Hom_{\DM_{gm}(k, \mb{Q})}((M_{1}(E))^{\otimes a}, \mb{Q}(a-b)[i]) \otimes Hom_{\DM_{gm}(k, \mb{Q})}((M_{1}(E))^{\otimes c}, \mb{Q}(c-d)[j]) \\
\to
&Hom_{\DM_{gm}(k, \mb{Q})}((M_{1}(E))^{\otimes a} \otimes (M_{1}(E))^{\otimes c}, \mb{Q}(a+c-b-d)[i+j]) \\
\to 
& Hom_{\DM_{gm}(k, \mb{Q})}((M_{1}(E))^{\otimes a+c}, \mb{Q}(a+c-b-d)[i+j]),
\end{split}
\end{equation}
where the first map is taking the external product in $\DM_{gm}(k, \mb{Q})$.
\end{lem}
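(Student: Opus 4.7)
The plan is to establish the compatibility by tracing through each step of the isomorphism chain used in the proof of Lemma \ref{Comp of cycle} and checking that every map involved respects external products. More precisely, the isomorphism of that lemma is a composition of several identifications, namely (i) the Suslin-Voevodsky moving lemma $H^i(C_*(z_{equi}(E^a,\mb{A}^{a-b},0))) \cong A_{b, 2a-2b-i}(E^a)$ from bivariant cycle cohomology, (ii) the comparison between Borel-Moore motivic homology and motivic cohomology, and (iii) the duality isomorphism $H^{a+i}(E^a,\mb{Q}(a-b)) \cong \Hom_{\DM_{gm}(k,\mb{Q})}(M(E)^{\otimes a},\mb{Q}(a-b)[a+i])$, followed by the $(\mb{Z}/2)^a$-projection onto $M_1(E)^{\otimes a}$. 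I would show that each of these identifications is a ring/module homomorphism with respect to the appropriate external products.

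First, at the level of cycle complexes, the external product $Z_1\boxtimes Z_2$ defined in Remark \ref{ext product} is the concrete geometric operation sending two finite correspondences to their product correspondence on $E^{a+c}\times \mb{A}^{a+c-b-d}\times \square^\bullet$; it is functorial in both arguments and compatible with the differentials up to the Koszul sign, which is exactly the formula $(-1)^{c(a-2b-i_1)}m(Z_1\times Z_2)$ given in that remark. Hence passing to cohomology gives a well-defined pairing
\[
H^i(\widetilde{C}^{Alt,-}_{a-2b-*}(z_{q.fin}(\mb{A}^{a-b}))(E^a)) \otimes H^j(\widetilde{C}^{Alt,-}_{c-2d-*}(z_{q.fin}(\mb{A}^{c-d}))(E^c)) \to H^{i+j}(\widetilde{C}^{Alt,-}_{a+c-2(b+d)-*}(z_{q.fin}(\mb{A}^{a+c-b-d}))(E^{a+c})),
\]
and the compatibility with the $(\mb{Z}/2)^a\times(\mb{Z}/2)^c \hookrightarrow (\mb{Z}/2)^{a+c}$-equivariance is automatic.

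Next, I would transport this pairing through the Friedlander-Voevodsky identification. Chapter 4 of \cite{FSV} equips bivariant cycle cohomology $A_{*,*}(-)$ with external products defined precisely by fibered products of equidimensional cycles; these are the same operations as $\boxtimes$ up to the equidimensionality and homotopy invariance statements used in the proof of Lemma \ref{Comp of cycle}, so the isomorphism $H^i(C_{a-2b-*}(z_{equi}(E^a,b))) \cong A_{b,2a-2b-i}(E^a)$ intertwines $\boxtimes$ on the left with the external product in op.\ cit.\ on the right. The further identification with Borel-Moore motivic homology $H^{BM}_{2a-i}(E^a,\mb{Q}(b))$ and then with motivic cohomology $H^{a+i}(E^a,\mb{Q}(a-b))$ (using smoothness of $E^a$ and Poincar\'e duality, together with the comparison of Borel-Moore and motivic (co)homology cited before Remark 9.5 of \cite{FSV}) is also multiplicative: the external product on motivic cohomology is, by construction, the image of the Friedlander-Voevodsky pairing under this comparison.

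Finally, I would identify the external product on motivic cohomology $H^*(E^a,\mb{Q}(a-b))$ with the tensor product of morphisms in $\DM_{gm}(k,\mb{Q})$. Under $H^{a+i}(E^a,\mb{Q}(a-b)) \cong \Hom_{\DM_{gm}(k,\mb{Q})}(M(E^a),\mb{Q}(a-b)[a+i])$ together with the K\"unneth formula $M(E^a)\cong M(E)^{\otimes a}$ and its dual, the cup product on motivic cohomology corresponds to the tensor product pairing on $\Hom$-groups in a rigid symmetric monoidal category; this is a general fact about tensor triangulated categories of geometric origin. Projecting onto the $M_1(E)^{\otimes a}$-isotypical component using the decomposition $M(E)=\mb{Q}\oplus M_1(E)[1]\oplus \mb{Q}(1)[2]$ preserves the pairing because the K\"unneth isomorphism is natural with respect to the $(\mb{Z}/2)^a$-action. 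The main obstacle I expect is bookkeeping at this last stage: keeping track of the Koszul signs introduced by the shift $[a]$ in the Poincar\'e duality identification, the sign $(-1)^{c(a-2b-i_1)}$ built into the external product of cycles, and the symmetry constraint in $\DM_{gm}(k,\mb{Q})$, and checking that these all conspire to match the standard sign conventions for tensor products of morphisms in a symmetric monoidal triangulated category.
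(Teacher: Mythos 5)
Your proposal is correct and takes essentially the same approach as the paper: the paper's proof simply observes that, under the identification of Lemma \ref{Comp of cycle}, both pairings are induced by the one cycle-level external product of Remark \ref{ext product} on the cohomology of $C_{a-2b-*}(z_{q.fin}(\mb{A}^{a-b}))(E^a)$. You merely unpack what the paper asserts, verifying multiplicativity of the intermediate comparison isomorphisms from \cite{FSV} (bivariant cycle cohomology, Borel--Moore homology, duality) together with the sign bookkeeping, all of which the paper leaves implicit.
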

\begin{proof}  
By Lemma \ref{Comp of cycle},
$Hom_{\DM_{gm}(k, \mb{Q})}((M_{1}(E))^{\otimes a}, \mb{Q}(a-b)[i])$ can be identified as the cohomology group of a  subcomplex $\widetilde{C}^{Alt , -}_{a - 2b - *}(z_{q.fin}(\mb{A}^{a-b}))(E^{a})$ of $C_{a-2b-*}(z_{q.fin}(\mb{A}^{a-b}))(E^a)$. The external product of
$$Hom_{\DM_{gm}(k, \mb{Q})}((M_{1}(E))^{\otimes a}, \mb{Q}(a-b)[i])$$
defined as above is just induced by the product defined in Remark \ref{ext product} on cohomology groups of $$C_{a-2b-*}(z_{q.fin}(\mb{A}^{a-b}))(E^a).$$
On the other hand, notice that the product defined in Remark \ref{ext product} on the cohomology groups of the cycle complex
$$\widetilde{C}^{Alt , -}_{a - 2b - *}(z_{q.fin}(\mb{A}^{a-b}))(E^{a})$$
is given by the external product on the cohomology groups of 
$$C_{a-2b-*}(z_{q.fin}(\mb{A}^{a-b}))(E^a).$$
\end{proof}
From now on, we assume that $E$ is an elliptic curve without CM. The CM case will be discussed in the next section.
\begin{lem} \label{Basic lemma I}
The cohomologies of $\ml{E}^{*}_{a,b}$ are canonically isomorphic to the cohomologies of the following complex of $GL_2$ representations $$\bigoplus_{c+2d =a, c, d \geq 0}Hom_{\DM_{gm}(k, \mb{Q})}(Sym^c M_1(E)(d), \mb{Q}(a-b)[*]) \otimes Sym^{c}\V(d+b-a),$$
where we view its differential maps as zero.
\end{lem}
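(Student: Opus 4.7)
The plan is to combine Lemma \ref{Comp of cycle} with Schur--Weyl duality and the semisimplicity of $\mb{Q}[\Sigma_a]$. First I would observe that the differential of $\ml{E}^*_{a,b}$ acts only on the cycle complex factor $C^*_{a,b} := \widetilde{C}^{Alt,-}_{a-2b-*}(z_{q.fin}(\mb{A}^{a-b}))(E^a)$, while $\V^{\otimes a}(b-a)$ sits in cohomological degree $0$. Since $\mb{Q}[\Sigma_a]$ is semisimple by Maschke's theorem, every $\mb{Q}[\Sigma_a]$-module is flat, so tensor product with $\V^{\otimes a}(b-a)$ over $\mb{Q}[\Sigma_a]$ is exact and commutes with cohomology:
$$H^i(\ml{E}^*_{a,b}) \;\cong\; H^i(C^*_{a,b}) \otimes_{\mb{Q}[\Sigma_a]} \V^{\otimes a}(b-a).$$

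Next, I would apply Schur--Weyl duality to decompose $\V^{\otimes a}(b-a)$ as a $(\Sigma_a \times GL_2)$-representation. Since $\V$ is $2$-dimensional, only partitions with at most two rows contribute, giving
$$\V^{\otimes a}(b-a) \;\cong\; \bigoplus_{c+2d=a,\, c,d\geq 0} V_{(c+d,d)} \otimes Sym^c\V(d+b-a),$$
where $V_{(c+d,d)}$ is the Specht module carrying all the $\Sigma_a$-action and $S_{(c+d,d)}(\V) = Sym^c\V\otimes\det^d$ is the corresponding Schur functor applied to $\V$. This is exactly parallel to the decomposition of $M_1(E)^{\otimes a}$ coming from Lemma \ref{Basic decom of elliptic motive}, namely
$$M_1(E)^{\otimes a} \;\cong\; \bigoplus_{c+2d=a,\, c,d\geq 0} V_{(c+d,d)} \otimes Sym^c M_1(E)(d).$$
Plugging this into Lemma \ref{Comp of cycle}, and using that the irreducible $\mb{Q}$-representations of $\Sigma_a$ are absolutely irreducible and self-dual, I obtain a $\Sigma_a$-equivariant isomorphism
$$H^i(C^*_{a,b}) \;\cong\; \bigoplus_{c+2d=a,\, c,d\geq 0} V_{(c+d,d)} \otimes \mathrm{Hom}_{\DM_{gm}(k,\mb{Q})}(Sym^c M_1(E)(d),\mb{Q}(a-b)[i]).$$

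Finally, I would assemble the two decompositions. By Schur's lemma together with Artin--Wedderburn for $\mb{Q}[\Sigma_a]$, one has $V_\lambda \otimes_{\mb{Q}[\Sigma_a]} V_\mu = \mb{Q}$ if $\lambda=\mu$ and $0$ otherwise, so only the diagonal terms survive and the mixed sum collapses to
$$\bigoplus_{c+2d=a,\, c,d\geq 0} \mathrm{Hom}_{\DM_{gm}(k,\mb{Q})}(Sym^c M_1(E)(d),\mb{Q}(a-b)[i]) \otimes Sym^c\V(d+b-a),$$
which is exactly the $i$-th cohomology of the target complex with zero differentials. The main obstacle I foresee is not a deep one but rather a bookkeeping one: I need to verify that the $\Sigma_a$-action appearing on the Hom side of Lemma \ref{Comp of cycle} (induced by permutation of factors in $M_1(E)^{\otimes a}$) matches the $\Sigma_a$-action on $C^*_{a,b}$ (by permutation of factors in $E^a$) under the isomorphism of that lemma. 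This should follow from the $\Sigma_a$-equivariance of each step in the proof of Lemma \ref{Comp of cycle}, namely the comparison between Suslin-type and bivariant cycle complexes and the homotopy invariance isomorphism, each of which is manifestly functorial in $E^a$.
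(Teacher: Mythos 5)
Your proposal is correct and follows essentially the same route as the paper: identify $H^i$ of the cycle-complex factor via Lemma \ref{Comp of cycle}, decompose both $M_1(E)^{\otimes a}$ (Lemma \ref{Basic decom of elliptic motive}) and $\V^{\otimes a}(b-a)$ into two-row isotypic pieces, and collapse the double sum using $V_\lambda \otimes_{\mb{Q}[\Sigma_a]} V_\mu \cong \mb{Q}$ or $0$. The points you flag explicitly (exactness of $\otimes_{\mb{Q}[\Sigma_a]}$ by semisimplicity, self-duality of Specht modules, and $\Sigma_a$-equivariance of the comparison isomorphism) are exactly the steps the paper leaves implicit, so no gap remains.
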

\begin{proof}
By Lemma \ref{Comp of cycle}, we have the following isomorphism between $GL_2$ representations:
\begin{equation} \nonumber
H^{i}(\ml{E}^{*}_{a,b}) \cong Hom_{\DM_{gm}(k, \mb{Q})}((M_{1}(E))^{\otimes a}, \mb{Q}(a-b)[i]) \otimes_{\mb{Q}[\Sym_a]} \V^{a}(b-a).
\end{equation}
Then by Lemma \ref{Basic decom of elliptic motive}, we know that:
\begin{equation} \nonumber
\begin{split}
&Hom_{\DM_{gm}(k, \mb{Q})}((M_{1}(E))^{\otimes a}, \mb{Q}(a-b)[i]) \otimes_{\mb{Q}[\Sym_a]} \V^{a}(b-a) \\
\cong & Hom_{\DM_{gm}(k, \mb{Q})}(\oplus_{c+2d = a, c,d \geq 0} V_{(c+d,d)} \otimes Sym^c(M_1(E))(d), \mb{Q}(a-b)[i]) \\
& \otimes_{\mb{Q}[\Sym_a]} (\oplus_{e+2f = a, e,f \geq 0} V_{(e+f, f)}\otimes Sym^{e}\V(f+b-a)) \\
\cong & \oplus_{c+2d = e+2f = a, c,d,e,f \geq 0}Hom_{\DM_{gm}(k, \mb{Q})}( Sym^c(M_1(E))(d), \mb{Q}(a-b)[i]) \\
& \otimes (V_{(c+d, d)}^{\vee} \otimes_{\mb{Q}[\Sym_a]} V_{(e+f,f)}) \otimes Sym^{e}\V(f+b-a) \\
\cong &\oplus_{c+2d =a, c, d \geq 0}Hom_{\DM_{gm}(k, \mb{Q})}(Sym^c M_1(E)(d), \mb{Q}(a-b)[i]) \\
&\otimes Sym^{c}\V(d+b-a).
\end{split}
\end{equation}
Notice that given two irreducible representations $V, W$ of a finite group $G$ over $\mb{Q}$, then $V \otimes_{\mb{Q}[G]} W = \mb{Q}$ if $V \cong W$. Otherwise, it's zero. 
\end{proof}

\begin{cor}  \label{0-th vanishing}
If the $0$-vanishing property, defined in Definition \ref{r-th van}, holds for the elliptic curve $E$, then for any $a > 0$, the cohomolgies of $\ml{E}_{2a,a}^{*}$ are all isomorphic to the trivial $GL_{2}$-representation $\mb{Q}$ concentrated in degree zero.
\end{cor}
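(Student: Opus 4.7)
The strategy is to apply Lemma~\ref{Basic lemma I} to $\ml{E}^*_{2a, a}$ and then analyze each summand of the resulting decomposition using the hypothesis together with a standard motivic cohomology vanishing. Specializing $(a, b) \to (2a, a)$ in Lemma~\ref{Basic lemma I} and using the invertibility of Tate twists to absorb $\mb{Q}(a-i)$ into the right-hand argument, I would rewrite $H^*(\ml{E}^*_{2a, a})$ as
\begin{equation} \nonumber
\bigoplus_{i=0}^{a} Hom_{\DM_{gm}(k, \mb{Q})}(Sym^{2i} M_1(E), \mb{Q}(i)[*]) \otimes Sym^{2i}\V(-i),
\end{equation}
where $i$ parametrizes the admissible pairs $(c, d) = (2i, a-i)$ that appear in the sum of Lemma~\ref{Basic lemma I}.

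The $i = 0$ summand is motivic cohomology of $Spec\,k$ with constant coefficients tensored with the trivial $GL_2$-representation $Sym^0 \V(0) = \mb{Q}$, which contributes exactly $\mb{Q}$ concentrated in cohomological degree zero. The remaining task is to show that every summand with $i \geq 1$ vanishes in every cohomological degree $j$.

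For $j \leq 0$ and $i \geq 1$ the $0$-th vanishing hypothesis directly yields the desired vanishing. For $j > 0$ the hypothesis is silent, so I would argue by a weight-degree bound: since $Sym^{2i} M_1(E)$ is a retract of $M_1(E)^{\otimes 2i}$, and the decomposition $M(E) = \mb{Q} \oplus M_1(E)[1] \oplus \mb{Q}(1)[2]$ exhibits $M_1(E)^{\otimes 2i}$ as a retract of $M(E^{2i})[-2i]$, the relevant Hom group is a direct summand of $H^{j+2i}(E^{2i}, \mb{Q}(i))$. The standard vanishing $H^p(X, \mb{Q}(q)) = 0$ for smooth $X$ whenever $p > 2q$ (equivalent to Bloch's $CH^q(X, n) = 0$ for $n < 0$) then applies, as $j + 2i > 2i$. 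The only mildly delicate point is recognizing that the $0$-th vanishing hypothesis must be supplemented by this automatic positive-degree vanishing; this is the same observation already used implicitly in Example~\ref{ex for van}. Combining the two cases kills all $i \geq 1$ contributions and yields the claim.
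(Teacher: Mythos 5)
Your proposal is correct and follows essentially the same route as the paper's proof: both apply Lemma~\ref{Basic lemma I} with $(a,b)=(2a,a)$, identify the $c=0$ summand as $\mb{Q}$ in degree zero, and kill all summands with $c\geq 1$. The only difference is that you explicitly treat the positive cohomological degrees $j>0$ (not covered by the $0$-th vanishing hypothesis of Definition~\ref{r-th van}) via the standard bound $H^{p}(X,\mb{Q}(q))=0$ for $p>2q$ applied to $H^{j+2i}(E^{2i},\mb{Q}(i))$, a point the paper's proof leaves implicit; this is a legitimate and indeed necessary supplement, in the same spirit as the computation in Example~\ref{ex for van}.
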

\begin{proof}
By Lemma \ref{Basic lemma I}, we have:
\begin{equation} 
\begin{split}
&H^{*}(\ml{E}^{*}_{2a, a}) \cong  \bigoplus_{c+2d =2a, c, d \geq 0}Hom_{\DM_{gm}(k, \mb{Q})}(Sym^c M_1(E), \mb{Q}(a-d)[*]) \otimes Sym^{c}\V(d-a).
\end{split}
\end{equation}

From Definition \ref{r-th van}, we know that:
$$Hom_{\DM_{gm}(k, \mb{Q})}(Sym^c M_1(E), \mb{Q}(a-d)[*]) \cong 0 \ \ \ \text{if} \ \ \  c \geq 1.$$
Therefore, we have:
\[
H^n(\ml{E}^{*}_{2a, a}) \cong 
\begin{cases}
0 & \text{if} \ n \neq 0; \\
\mb{Q} & \text{if} \ n = 0,
\end{cases}
\]
where $\mb{Q}$ is the trivial $GL_2$ representation.
\end{proof}

Recall in the previous section, we have defined $\eta$ in equality (\ref{colimit process}). In the next lemma, we want to give a description of $\eta$ under the identification in Lemma \ref{Basic lemma I}. 
\begin{lem} \label{eta}
Via the identification of Lemma \ref{Basic lemma I}, the map: 
$$\eta: \ml{E}^{*}_{a,b} \to \ml{E}^{*}_{a+2,b+1},$$
induced the following map on cohomology groups:
\begin{equation} \nonumber
\begin{split}
& (Hom_{\DM_{gm}(k, \mb{Q})}(Sym^c M_1(E)(d), \mb{Q}(a-b)[*]) \otimes Sym^{c}\V(d+b-a)) \\
& \otimes (Hom_{\DM_{gm}(k, \mb{Q})}(\mb{Q}(1), \mb{Q}(1)) \otimes \mb{Q})\\ 
\to & (Hom_{\DM_{gm}(k, \mb{Q})}(Sym^{c}M_1(E)(d+1), \mb{Q}(a-b+1)[*]) \\
& \otimes Sym^{c}\V(d+b-a).
\end{split}
\end{equation}
Moreover, the map on cohomology groups induces by $\eta$ is a monomorphism in the category of $GL_2$ representations.
\end{lem}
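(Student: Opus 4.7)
The plan is to factor $\eta$ through multiplication by the generator of $H^0(\ml{E}^*_{2,1})$ produced in Example~\ref{E21}, and then to trace this multiplication through the isotypical decomposition of Lemma~\ref{Basic lemma I}.

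By Remark~\ref{existence of beta}, $\eta$ is the composition of the canonical map $\ml{E}^*_{a,b} \hookrightarrow \ml{E}^*_{a,b} \otimes \ml{E}^*_{2,1}$ (given by the trivial-summand generator of $\ml{E}^*_{2,1}$) followed by the product in $\ml{E}^*$. Example~\ref{E21} identifies this generator with the class of $\frac{1}{2}(\Delta^+ - \Delta^-) \otimes 1$, where $1$ spans the $\mb{Q}$-summand of $\V^{\otimes 2}(-1) \cong Sym^2\V(-1) \oplus \mb{Q}$; by Lemma~\ref{Comp of cycle} its cohomology class corresponds to the canonical isomorphism $\wedge^2 M_1(E) \xrightarrow{\sim} \mb{Q}(1)$ of Lemma~\ref{Basic decom of elliptic motive}.

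Applying Lemma~\ref{Comp of cycle algebra with motive} to translate multiplication in $\ml{E}^*$ into external product in $\DM_{gm}(k,\mb{Q})$, and combining with Lemma~\ref{Basic lemma I}, I expect the induced map to send the $(c,d)$-isotypical summand of $H^*(\ml{E}^*_{a,b})$ into the $(c,d+1)$-isotypical summand of $H^*(\ml{E}^*_{a+2,b+1})$: on the Hom-side it is tensoring with the iso $\wedge^2 M_1(E) \cong \mb{Q}(1)$, which yields the natural identification $Hom(Sym^c M_1(E)(d), \mb{Q}(a-b)[*]) \xrightarrow{\sim} Hom(Sym^c M_1(E)(d+1), \mb{Q}(a-b+1)[*])$ (both groups equal $Hom(Sym^c M_1(E), \mb{Q}(a-b-d)[*])$), while on the representation side $Sym^c\V(d+b-a)$ is sent to the identically-indexed summand of the target, since $(d+1)+(b+1)-(a+2) = d+b-a$.

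Injectivity then follows because the restriction to each $(c,d)$-summand is an isomorphism onto a direct summand of the target, and the reindexing $(c,d) \mapsto (c,d+1)$ injects distinct summands of the source into distinct summands of the target. The main bookkeeping obstacle is the Pieri/Schur--Weyl verification: one must check that the $\mb{Q} = \wedge^2\V \otimes \det^{-1}$ piece of $\V^{\otimes 2}(-1)$, when induced from $\Sigma_a \times \Sigma_2$ up to $\Sigma_{a+2}$, lands exclusively in the $V_{(c+d+1, d+1)}^{(\Sigma_{a+2})}$-isotypical piece, so that the image genuinely lies in one and only one summand of the target decomposition and no multiplicity is lost when passing back through $\otimes_{\mb{Q}[\Sigma_{a+2}]}$.
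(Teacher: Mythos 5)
Your proposal is correct and takes essentially the same route as the paper: factor $\eta$ through multiplication by the generator of $H^*(\ml{E}^{*}_{2,1})$ computed in Example \ref{ex for van} (Example \ref{E21}), use Lemma \ref{Comp of cycle algebra with motive} together with Lemma \ref{Basic lemma I} to identify the induced map as sending the $(c,d)$-summand to the $(c,d+1)$-summand, and deduce injectivity summand by summand. The paper expresses your ``natural identification'' of the two twisted Hom groups as an application of Voevodsky's cancellation theorem, and it passes over the $\Sigma_a \times \Sigma_2 \to \Sigma_{a+2}$ multiplicity bookkeeping at the same level of detail you flag.
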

\begin{proof}
By Example \ref{ex for van}, we have a simple description of $\ml{E}^{*}_{2,1}$:
$$H^{*}(\ml{E}^{*}_{2,1}) \cong Hom_{\DM_{gm}(k, \mb{Q})}(\mb{Q}(1), \mb{Q}(1)) \otimes \mb{Q}.$$
Using Lemma \ref{Comp of cycle algebra with motive} and Lemma \ref{Basic lemma I}, we can identify $\eta$ as sending the piece 
$$Hom_{\DM_{gm}(k, \mb{Q})}(Sym^c M_1(E)(d), \mb{Q}(a-b)[*]) \otimes Sym^{c}\V(d+b-a)$$
to
\begin{equation} \nonumber
\begin{split}
Hom_{\DM_{gm}(k, \mb{Q})}(Sym^c M_1(E)(d+1), \mb{Q}(a-b+1)[*]) \otimes Sym^{c}\V(d+b-a)
\end{split}
\end{equation}
By Voevodsky's cancellation theorem in \cite{V10}, on each piece of $\ml{E}^{*}_{a, b}$, $\eta$ is an isomorphism, which implies that $\eta$ is an injection.
\end{proof}

\begin{cor} \label{r-th vanishing}
If an elliptic curve $E$ satisfies the $r$-th vanishing property, defined in Definition \ref{r-th van}, for all positive integer $r$, then all the $H^*(\ml{E}^{*}_{-r})$ are zero. Furthermore, if the elliptic curve $E$ satisfies the $r$-th vanishing property for all non-negative integer $r$, then we have $H^*(\ml{E}^{*}) = H^*(\ml{E}_{ell}^{*})$. 
\end{cor}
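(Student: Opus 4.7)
The plan is to exploit exactness of filtered colimits so that $H^j(\ml{E}^*_{-r}) = \underrightarrow{lim}_i H^j(\ml{E}^*_{r+2i, r+i})$; it then suffices, for the first claim (case $r \geq 1$), to prove $H^j(\ml{E}^*_{r+2i, r+i}) = 0$ for every $j \in \mb{Z}$ and every admissible $i$. Two complementary range restrictions will combine to give this.

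The first restriction is intrinsic to the underlying cycle complex: $\widetilde{C}^{Alt,-}_n$ vanishes for $n < 0$, so $\ml{E}^*_{a', b'}$ is concentrated in cohomological degrees $j \leq a' - 2b'$. For $(a', b') = (r+2i, r+i)$ this reads $j \leq -r$, killing $H^j$ automatically for $j > -r$ without any hypothesis.

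For $j \leq -r$ I would apply Lemma \ref{Basic lemma I} to decompose
$$H^j(\ml{E}^*_{r+2i, r+i}) \cong \bigoplus_{\substack{c+2d = r+2i\\ c,d \geq 0}} Hom_{\DM_{gm}(k, \mb{Q})}(Sym^c M_1(E)(d), \mb{Q}(i)[j]) \otimes Sym^c \V(d-i).$$
Voevodsky's cancellation rewrites the Hom summand as $Hom_{\DM_{gm}(k, \mb{Q})}(Sym^{r+2s} M_1(E), \mb{Q}(s)[j])$ via the reparametrization $s := (c-r)/2 = i - d$, while the representation summand becomes $Sym^{r+2s}\V(-s)$. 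For indices $s \geq 0$, the inequality $r + j \leq 0$ together with the $r$-th vanishing property forces the Hom group to vanish. For the finitely many indices with $s < 0$, I would observe that $Sym^{r+2s} M_1(E)$ is a direct summand of $M(E^{r+2s})[-(r+2s)]$, so the Hom group is a direct summand of the motivic cohomology $H^{j + r + 2s,\, s}_{mot}(E^{r+2s}, \mb{Q})$, which vanishes identically because motivic cohomology is concentrated in non-negative Tate weights. This settles the first claim.

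For the second claim I would compare $\ml{E}^* = \mb{Q} \oplus \bigoplus_{a \geq 1}\ml{E}^*_a$ and $\ml{E}^*_{ell} = \bigoplus_{a \in \mb{Z}} \ml{E}^*_a$ summand by summand. The pieces with $a \geq 1$ agree, the pieces with $a \leq -1$ contribute zero cohomology by the first claim, and Corollary \ref{0-th vanishing} (which relies on the $0$-th vanishing property) identifies $H^*(\ml{E}^*_0)$ with the trivial representation $\mb{Q}$ in degree $0$, matching the distinguished summand $\mb{Q}$ of $\ml{E}^*$. The main subtlety will be the careful bookkeeping of the reparametrization $(c, d) \leftrightarrow s$ and the need to isolate the ``boundary'' indices $s < 0$, where the $r$-th vanishing hypothesis does not apply and where one must instead invoke the trivial vanishing of motivic cohomology in negative Tate weights.
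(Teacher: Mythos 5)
Your proposal is correct and follows essentially the same route as the paper: decompose $H^*(\ml{E}^{*}_{r+2i,\,r+i})$ via Lemma \ref{Basic lemma I}, kill the summands using the $r$-th vanishing property, and treat the Adams degree $0$ piece via Corollary \ref{0-th vanishing}. In fact you are more careful than the paper on two points it passes over silently: the $r$-th vanishing property only applies in cohomological degrees $j$ with $r+j\leq 0$ and only to summands with twist $s=i-d\geq 0$. Your two supplementary observations --- that $\ml{E}^{*}_{r+2i,\,r+i}$ is concentrated in degrees $j\leq -r$ because $\widetilde{C}^{Alt,-}_{n}=0$ for $n<0$, and that the summands with $s<0$ are direct summands of motivic cohomology of $E^{r+2s}$ in negative Tate weight, hence vanish identically --- are exactly the justifications needed to make the paper's terse application of Definition \ref{r-th van} legitimate.

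The one place where you are less complete than the paper is the second claim, at Adams degree $0$: Corollary \ref{0-th vanishing} identifies each \emph{term} $H^*(\ml{E}^{*}_{2a,a})$ of the colimit with the trivial representation $\mb{Q}$, but to conclude $H^*(\ml{E}^{*}_{0})\cong\mb{Q}$ you must also know that the transition maps induced by $\eta$ are isomorphisms (and not, say, zero) on these trivial summands --- otherwise the filtered colimit could a priori vanish. This is precisely what Lemma \ref{eta} (equivalently, Corollary \ref{cycle algebra GL_2 identification}) provides, and it is the step the paper cites at this point; adding that one line makes your argument complete.
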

\begin{proof}
By Lemma \ref{Basic lemma I}, we have a quasi-isomorphism:
\begin{equation} \nonumber
\begin{split}
&H^{*}(\ml{E}^{*}_{r+2i, r+i}) \cong \bigoplus_{c+2d =r+2i, c, d \geq 0}Hom_{\DM_{gm}(k, \mb{Q})}(Sym^c M_1(E)(d), \mb{Q}(i)[*]) \otimes Sym^c \V(d-i).
\end{split}
\end{equation}
If $E$ satisfies the $r$-th vanishing property for $r \in \mb{Z}_{> 0}$, we have:
\begin{equation} \nonumber
\begin{split}
& Hom_{\DM_{gm}(k, \mb{Q})}(Sym^c M_1(E), \mb{Q}(i-d)[*]) \\
\cong & Hom_{\DM_{gm}(k, \mb{Q})}(Sym^{r+2(i-d)} M_1(E), \mb{Q}(i-d)[*]) \cong 0.
\end{split}
\end{equation}
Therefore, $H^{*}(\ml{E}^{*}_{r+2i, r+i}) \cong 0$ for any $r \in \mb{Z}_{> 0}$ and any $i \in \mb{Z}_{\geq 0}$, which implies that $H^*(\ml{E}^{*}_{-r}) = 0$.
\

Furthermore, if $E$ also satisfies the $0$-th vanishing property, then by Corollary \ref{0-th vanishing}, we know that $H^{*}(\ml{E}_{2a,a}) \cong \mb{Q}$. Also, from Lemma \ref{eta}, we know the connecting map $\eta$ is the identity. Therefore we obtain that $H^{*}(\ml{E}^{*}) = H^*(\ml{E}_{ell}^{*})$.
\end{proof}

\begin{cor} \label{cycle algebra GL_2 identification}
Let $a$ be any integer. In the derived category of $GL_{2}$ representations,  we have the following isomorphisms:
\begin{equation} \nonumber
\begin{split}
&H^*(\ml{E}^{*}_{a}) \cong \bigoplus_{i \geq 0, a\equiv i(2)} Hom_{\DM_{gm}(k, \mb{Q})}(Sym^i M_1(E), \mb{Q}(\frac{a+i}{2})[*]) \otimes Sym^i \V(-\frac{a+i}{2}).
\end{split}
\end{equation}
\end{cor}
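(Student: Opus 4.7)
My plan is to reduce the claim to a direct, index-shuffling consequence of Lemmas \ref{Basic lemma I} and \ref{eta}, once cohomology has been commuted past the filtered colimit defining $\ml{E}^*_a$.

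First, I would note that cohomology commutes with filtered colimits, so from Definition \ref{df of elliptic-alg},
\[
H^*(\ml{E}^*_a) \;\cong\; \underrightarrow{\lim}_{i}\; H^*(\ml{E}^{*}_{-a+2i,\,-a+i}),
\]
the transition maps being those induced by $\eta$.  Writing $A = -a+2i$ and $B = -a+i$, so that $A-B = i$, Lemma \ref{Basic lemma I} identifies each term with
\[
\bigoplus_{\substack{c+2d = -a+2i \\ c,d \geq 0}} \Hom_{\DM_{gm}(k,\mb{Q})}\!\bigl(Sym^c M_1(E)(d),\, \mb{Q}(i)[*]\bigr) \otimes Sym^c \V(d-i).
\]
Reparametrising the index by $c$ alone, one has $d = i - (a+c)/2$ (forcing $c \equiv a \pmod{2}$ and $c \leq -a+2i$). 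Absorbing the internal Tate twist via $\Hom(Sym^cM_1(E)(d),\mb{Q}(i)[*]) = \Hom(Sym^cM_1(E),\mb{Q}(i-d)[*])$, and noticing the crucial identity $d-i = -(a+c)/2$, the $c$-th summand becomes
\[
\Hom_{\DM_{gm}(k,\mb{Q})}\!\bigl(Sym^c M_1(E),\,\mb{Q}\bigl(\tfrac{a+c}{2}\bigr)[*]\bigr) \otimes Sym^c \V\bigl(-\tfrac{a+c}{2}\bigr),
\]
which is manifestly independent of $i$.

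Second, I would invoke Lemma \ref{eta}: under the identification above, $\eta$ sends the $(c,d)$-summand at stage $i$ to the $(c,d+1)$-summand at stage $i+1$, and by Voevodsky's cancellation theorem this transition is an isomorphism on each fixed $c$-summand. Hence the colimit system is eventually constant on every $c$-isotypic component, and the colimit is simply the direct sum over $c \geq 0$ with $c \equiv a \pmod 2$ (each such $c$ eventually enters as $i$ grows). Relabelling $c$ as $i$ yields exactly
\[
H^*(\ml{E}^*_a) \;\cong\; \bigoplus_{\substack{i \geq 0 \\ a \equiv i\,(2)}} \Hom_{\DM_{gm}(k,\mb{Q})}\!\bigl(Sym^i M_1(E),\,\mb{Q}\bigl(\tfrac{a+i}{2}\bigr)[*]\bigr) \otimes Sym^i \V\bigl(-\tfrac{a+i}{2}\bigr).
\]

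The substantive content of this corollary lives in the previous lemmas (the Schur-type decomposition from Lemma \ref{Basic decom of elliptic motive} packaged via Lemma \ref{Basic lemma I}, and the cancellation-theorem input inside Lemma \ref{eta}); the only real care needed here is the bookkeeping showing that the Tate twist and the determinant twist combine to produce the index $-\tfrac{a+i}{2}$ independently of the colimit variable. That bookkeeping step, while elementary, is where a sign or an off-by-one slip is most likely, and I would verify it explicitly before combining with the colimit argument.
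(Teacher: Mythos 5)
Your proposal is correct and follows essentially the same route as the paper: identify each $H^*(\ml{E}^{*}_{-a+2i,-a+i})$ via Lemma \ref{Basic lemma I}, use Lemma \ref{eta} to see that the transition maps act as the identity (an isomorphism) on each fixed isotypical summand, pass to the filtered colimit, and reindex. Your explicit bookkeeping that the twist indices combine to $\tfrac{a+c}{2}$ independently of the colimit variable is just a more detailed version of the paper's ``rewriting the index set'' step.
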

\begin{proof}
Using Lemma \ref{Basic lemma I}, we have:
\begin{equation} \nonumber
\begin{split}
&H^*(\ml{E}^{*}_{a,b}) \cong 
\bigoplus_{c+2d =a, c, d \geq 0}Hom_{\DM_{gm}(k, \mb{Q})}(Sym^c M_1(E)(d), \mb{Q}(a-b)[*]) \otimes Sym^{c}\V(d+b-a).
\end{split}
\end{equation}
By Lemma \ref{eta}, the connecting map 
$$\eta: \ml{E}^{*}_{a,b} \to \ml{E}^{*}_{a+2, b+1}$$
sends the summand 
$$Hom_{\DM_{gm}(k, \mb{Q})}(Sym^c M_1(E)(d), \mb{Q}(a-b)[*]) \otimes Sym^{c}\V(d+b-a),$$
of $H^{*}(\ml{E}^{*}_{a,b})$ to the same direct summand in $H^{*}(\ml{E}^{*}_{a+2, b+1})$ by the identity map. Therefore taking the direct limit, we will get the direct sum of all the pieces of the form $$Hom_{\DM_{gm}(k, \mb{Q})}(Sym^c M_1(E)(d), \mb{Q}(a-b)[*]) \otimes Sym^{c}\V(d+b-a).$$
Rewriting the index set, one obtains the desired presentations.
\end{proof}

Next we want to compute the hom-groups between some special dg $\ml{E}_{ell}$-modules. 
\

We let $\ml{T}^{GL_2}_{\ml{E}_{ell}}$ be the full triangulated subcategory of the derived category of dg $\ml{E}_{ell}$-module generated by the dg $\ml{E}_{ell}$-module of the form $\{\ml{E}_{ell} \otimes \V^{-a}(b)[n] \}_{a , b, n \in \mb{Z}}$. Simply denote these elements by $\ml{E}_{ell}\langle a,b\rangle[n]$.
\

For convenience, we use the index $\ml{E}_{ell}$ to denote the hom group in $\ml{T}^{GL_2}_{\ml{E}_{ell}}$ and use $GL_2$ to stand for the derived category of $GL_2$ representations in next lemma.
\begin{equation}
\begin{split}
&Hom_{\ml{E}_{ell}}(\ml{E}_{ell}\langle a,b \rangle[n], \ml{E}_{ell}\langle c,d \rangle[m]) \\
=& Hom_{\ml{E}_{ell}}(\ml{E}_{ell} \otimes \V^{\otimes -a}(b)[n], \ml{E}_{ell} \otimes \V^{\otimes -c}(d)[m]) \\
=& Hom_{GL_{2}}(\V^{\otimes -a}(b), \ml{E}_{ell} \otimes \V^{\otimes -c}(d)[m-n]) \\
=& H^{m-n}(Hom_{GL_{2}}(\mb{Q}, \ml{E}_{ell} \otimes \V^{\otimes -c} \otimes \V^{\otimes a}(-b+d))).
\end{split}
\end{equation}

\begin{lem} \label{fully-faithful}
For $a,b,c,d,i \in \mb{Z}$,  we have:
\begin{equation} \nonumber
\begin{split}
& H^{i}(Hom_{GL_{2}}(\mb{Q}, \ml{E}_{ell} \otimes \V^{\otimes -c} \otimes \V^{\otimes a}(-b+d))) \\
\cong & Hom_{\DM_{gm}(k, \mb{Q})}((M_{1}(E))^{\otimes -a}(b), (M_{1}(E))^{\otimes -c}(d)[i]).
\end{split}
\end{equation}
\end{lem}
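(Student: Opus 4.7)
The plan is to reduce both sides to a common combinatorial expression by combining Corollary~\ref{cycle algebra GL_2 identification} with parallel Schur--Weyl type decompositions -- one of tensor powers of $\V$ as a $GL_2$-representation and one of tensor powers of $M_1(E)$ in $\DM_{gm}(k,\mb{Q})$. The matching is clean because the two decompositions share the same multiplicity spaces $V_\lambda$, indexed by two-row partitions.

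First I would interchange cohomology and $GL_2$-invariants: since $GL_2$ is reductive over $\mb{Q}$, the invariants functor $Hom_{GL_2}(\mb{Q}, -)$ is exact, and tensoring with the finite-dimensional representation $X := \V^{\otimes -c} \otimes \V^{\otimes a}(-b+d)$ is exact as well, so
\[
H^i(Hom_{GL_2}(\mb{Q}, \ml{E}_{ell} \otimes X)) \cong (H^i(\ml{E}_{ell}) \otimes X)^{GL_2}.
\]
Substituting the description of $H^*(\ml{E}_{ell})$ from Corollary~\ref{cycle algebra GL_2 identification} and pulling the invariants inside the direct sum turns this into
\[
\bigoplus_{e \in \mb{Z},\ j \geq 0} Hom_{\DM_{gm}(k,\mb{Q})}(Sym^j M_1(E), \mb{Q}(e)[i]) \otimes \bigl(Sym^j \V(-e) \otimes X\bigr)^{GL_2}.
\]

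Next, using $\V^\vee \cong \V(-1)$, I would rewrite $X \cong \V^{\otimes N}(P)$ with $N = |a|+|c|$ and an integer twist $P$ determined by the signs of $a,c$ and by $b,d$. Dually, using $M_1(E)^\vee \cong M_1(E)(-1)$ together with $Hom(A,B) \cong Hom(\mb{Q}, A^\vee \otimes B)$ and $((M_1(E))^{\otimes -a})^\vee \cong (M_1(E))^{\otimes a}$, the right-hand side of the target isomorphism becomes $Hom_{\DM_{gm}(k,\mb{Q})}(\mb{Q}, M_1(E)^{\otimes N}(P)[i])$ for the \emph{same} pair $(N,P)$. On the $\V$-side I would apply Schur--Weyl
\[
\V^{\otimes N} \cong \bigoplus_{j'+2k' = N,\ j', k' \geq 0} V_{(j'+k',k')} \otimes Sym^{j'}\V(k'),
\]
and the Clebsch--Gordan rule $Sym^j \V \otimes Sym^{j'} \V \cong \bigoplus_{s=0}^{\min(j,j')} Sym^{j+j'-2s}\V(s)$, together with the fact that $Sym^m \V(r)$ has no $GL_2$-invariants unless $m = r = 0$. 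This forces $j' = j$ and $e = (N+j)/2 + P$, and the invariant space that survives is $V_{(\frac{N+j}{2},\frac{N-j}{2})}$.

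On the motivic side, Lemma~\ref{Basic decom of elliptic motive} applied to $M_1(E)^{\otimes N}$, together with $(Sym^j M_1(E))^\vee \cong Sym^j M_1(E)(-j)$, rewrites the right-hand side as
\[
\bigoplus_{j+2k = N,\ j, k \geq 0} V_{(j+k,k)} \otimes Hom_{\DM_{gm}(k,\mb{Q})}\!\bigl(Sym^j M_1(E), \mb{Q}(\tfrac{N+j}{2} + P)[i]\bigr),
\]
which agrees term by term with the left-hand side. The main bookkeeping obstacle will be verifying that the twist $P$ coming from the $\V$-side agrees case-by-case with the twist coming from $(M_1(E))^{\otimes a} \otimes (M_1(E))^{\otimes -c}(d-b)$ for each of the four sign combinations of $a, c$; once that compatibility is checked, the representation-theoretic steps are entirely standard and the term-by-term comparison finishes the proof.
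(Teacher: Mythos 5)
Your proposal is correct and takes essentially the same route as the paper: both arguments use the dualities $\V^{\vee}\cong\V(-1)$ and $M_1(E)^{\vee}\cong M_1(E)(-1)$, decompose the representation side into irreducible $GL_2$-summands (Schur--Weyl plus Clebsch--Gordan) and the motivic side via Lemma \ref{Basic decom of elliptic motive}, and then invoke Corollary \ref{cycle algebra GL_2 identification} to identify and match the surviving terms. The only difference is bookkeeping: you package everything as a single twisted tensor power $\V^{\otimes N}(P)$ and match the common multiplicity spaces $V_{\lambda}$ directly, whereas the paper normalizes $b=d=0$ (via cancellation) and compares the explicit multiplicities $C_{a-2n,n}\times C_{-c-2m,c+m}\times D^{l}_{a-2n,-c-2m}$ after re-indexing.
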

Convention: For simplicity, we denote the multiplicity of $Sym^a \V \otimes det^b$ in $\V^{\otimes n}$ by $C_{a, b}$ and denote the multiplicity of $Sym^{a+b-2i} \V \otimes det^i$ in $Sym^a \V \otimes Sym^b \V$ by $D^{i}_{a, b}$.
\begin{proof}
The isomorphism $\V^{\vee} \cong \V(-1)$ gives us the isomorphism $\V^{-a} \cong \V^{a}(-a)$ in the category of $GL_2$-representations. Similarly using $M_1(E)^{\vee} \cong M_1(E)(-1)$ gives us an isomorphism between geometric motives $M_1(E)^{-a} \cong M_1^{\otimes a}(E)(-a)$. Without loss of generality, we can assume $a, c \geq 0$. By Voevodsky's Cancellation theorem, we can also assume that $b = 0$.  For simplicity, we only prove the case $d = 0$. 
\

$\V^{\otimes -c} \otimes \V^{\otimes a}$ is the direct sum of $Sym^{a - 2n}\V(n) \otimes Sym^{- c - 2m}\V(m)$, where $0 \leq 2n \leq a, c + 2m \leq 0, c + m \geq 0$, with multiplicities $C_{a-2n,n} \times C_{-c-2m,c+m}$. 
\

Furthermore, we can decompose $\V^{\otimes -c} \otimes \V^{\otimes a}$ as the direct sum of irreducible $GL_{2}$ representations of the form $Sym^{a-2n-c-2m-2l}\V(m+n+l)$, where the index set $\mu$ satisfies $0 \leq 2n \leq a, c + 2m \leq 0, c + m \geq 0, 0 \leq 2l \leq a-c-2(m+n)$, with multiplicity $C_{a-2n,n} \times C_{-c-2m,c+m} \times D_{a-2n, -c-2m}^{l}$. From this decomposition, we get $m+c+n+l \geq 0$, which implies that $a-2m-c-2n-2l \leq a+c$. 
\

For each irreducible representation $Sym^{a-2n-c-2m-2l}\V(m+n+l)$, we have:
\begin{equation} \label{computation of full}
\begin{split}
& H^{i}(Hom_{GL_{2}}(\mb{Q}, \ml{E}^{*}_{ell} \otimes Sym^{a-2n-c-2m-2l}\V(m+n+l))) \\
\cong & H^{i}(Hom_{GL_{2}}(\mb{Q}, \ml{E}^{*}_{c-a} \otimes Sym^{a-2n-c-2m-2l}\V(m+n+l))) \\
\cong & H^{i}(Hom_{GL_{2}}((Sym^{a-2n-c-2m-2l}\V(m+n+l))^{*}, \ml{E}^{*}_{c-a})) \\
\cong & H^{i}(Hom_{GL_{2}}(Sym^{a-2n-c-2m-2l}\V(m+n+l-a+c), \ml{E}^{*}_{c-a})) \\
\cong & Hom_{\DM_{gm}(k, \mb{Q})}(Sym^{a-2n-c-2m-2l}M_{1}(E), \mb{Q}(a-c-m-n-l)[i]),
\end{split}
\end{equation}
For the last isomorphism, we use Corollary \ref{cycle algebra GL_2 identification}. 
\

On the other hand, let us compute the hom-groups between motives. 
\begin{equation}
\begin{split}
& Hom_{\DM_{gm}(k, \mb{Q})}((M_{1}(E))^{\otimes -a}, (M_{1}(E))^{\otimes -c}[i]) \\
\cong & Hom_{\DM_{gm}(k, \mb{Q})}(M_{1}(E)^{\otimes a} \otimes M_{1}(E)^{\otimes c}, \mb{Q}(a)[i]) \\
\cong & \oplus_{0 \leq 2s \leq a, 0 \leq 2t \leq c}(C_{a-2s,s} \times C_{c-2t,t}) \\
&\cdot Hom_{\DM_{gm}(k, \mb{Q})}(Sym^{a-2s}M_{1}(E)(s) \otimes Sym^{c-2t}M_{1}(E)(t), \mb{Q}(a)[i]) \\
\cong & \oplus_{0 \leq 2s \leq a, 0 \leq 2t \leq c, 0 \leq 2r \leq a+c-2s-2t}(C_{a-2s,s} \times C_{c-2t,t} \times D^{r}_{a-2s, c-2t}) \\
& \cdot Hom_{\DM_{gm}(k, \mb{Q})}(Sym^{a+c-2s-2t-2r}M_{1}(E)(r), \mb{Q}(a-s-t)[i])
\end{split}
\end{equation}
Rewrite the index set, and let
$s =n, t = c+m, r = l.$
Then this index set is the same as $\mu$. Notice that the multiplicities of the term 
$$Hom_{\DM_{gm}(k, \mb{Q})}(Sym^{a-2n-c-2m-2l}M_{1}(E), \mb{Q}(a-c-m-n-l)[i])$$
in 
$$H^{i}(Hom_{GL_{2}}(\mb{Q}, \ml{E}^{*}_{ell} \otimes Sym^{a-2n-c-2m-2l}\V(m+n+l)))$$
and
$$Hom_{\DM_{gm}(k, \mb{Q})}((M_{1}(E))^{\otimes -a}, (M_{1}(E))^{\otimes -c}[i])$$
are the same. Both are $C_{a-2n,n} \times C_{-c-2m,c+m} \times D_{a-2n, -c-2m}^{l}$.
\end{proof}

\section{Computations for the CM case} \label{sec-compcm}
We let $E$ be an elliptic curve with CM. The computation results are parallel to the previous section. Recall that in this case every representation and cycles complexes $\ml{E}^{*}_{a, b}$ are considered over $\mb{K}$ and we have a decomposition of the standard representation of $GL_2: \V = V \oplus \bar{V}$. Because the proof of the results in this section are similar as before and short the length of this paper, we omit the proof and only state the results. Notice that $M_1(E) = M \oplus \bar{M}$ in $\DM_{gm}(k, \mb{K})$. For simplicity, the exponent $e$ on $M$, $\bar{M}$, $M$, $\bar{M}$ means the $e$-th tensor power of these objects.
\begin{lem} \label{Basic lemma II}
(Compare with Lemma \ref{Basic lemma I}) The cohomologies of $\ml{E}^{*}_{a,b}$ are canonically isomorphic to the cohomologies of the following complex of $T_{\mb{K}}$-representations $$\bigoplus_{e+f+2d =a, d, e, f \geq 0}Hom_{\DM_{gm}(k, \mb{K})}(M^e \otimes \bar{M}^f, \mb{K}(a-b-d)[*]) \otimes (V^e \otimes \bar{V}^f)(d+b-a),$$
where we view its differential maps as zero.
\end{lem}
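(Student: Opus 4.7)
The plan is to mimic the proof of Lemma \ref{Basic lemma I}, combining the cycle-to-motive comparison with the finer $B_{a,\mb{K}}$-decomposition available in the CM setting. The two changes relative to the non-CM argument are: work with $\mb{K}$-coefficients throughout, and use Ancona's splitting $M_1(E)_{\mb{K}} = M \oplus \bar{M}$ (matched with $\V = V \oplus \bar{V}$ on the representation side) to refine the Schur decomposition.

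First, I would verify that Lemma \ref{Comp of cycle} carries over verbatim to $\mb{K}$-coefficients: the proof via bivariant cycle cohomology, the homotopy invariance theorem of \cite{FSV}, and Theorem \ref{Comp alt} is formally unchanged, and yields
\begin{equation} \nonumber
H^i(\widetilde{C}^{Alt,-}_{a - 2b - *}(z_{q.fin}(\mb{A}^{a-b}))(E^a)) \cong Hom_{\DM_{gm}(k, \mb{K})}(M_1(E)_{\mb{K}}^{\otimes a}, \mb{K}(a-b)[i]).
\end{equation}
This isomorphism is equivariant under the $\Sigma_a$-action permuting the factors of $E^a$ and compatible with the $\mb{T}_{\mb{K}}$-action arising from the CM structure of $E$; together these upgrade it to an isomorphism of $B_{a,\mb{K}}$-modules, where $B_{a,\mb{K}}$ is the endomorphism algebra from Remark \ref{Basic decom of elliptic motiveII}.

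Next, I would decompose the right-hand side. Since $\wedge^2 M_1(E)_{\mb{K}} \cong \mb{K}(1)$ and $\wedge^3 M_1(E)_{\mb{K}} = 0$ (Lemma \ref{Basic decom of elliptic motive} holds over $\mb{K}$), the Schur decomposition gives
\begin{equation} \nonumber
M_1(E)_{\mb{K}}^{\otimes a} \cong \bigoplus_{c + 2d = a} V_{(c+d,d)} \otimes Sym^c(M_1(E)_{\mb{K}})(d).
\end{equation}
Applying Ancona's Theorem 4.1 in \cite{A} (recalled in Remark \ref{Basic decom of elliptic motiveII}), each $Sym^c$-piece further decomposes as $\bigoplus_{e+f = c} M^e \otimes \bar{M}^f$, matched with the $\mb{T}_{\mb{K}}$-decomposition $Sym^c(\V) \cong \bigoplus_{e+f = c} V^e \otimes \bar{V}^f$ on the representation side. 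The irreducible $B_{a,\mb{K}}$-modules $V_{(e+f+d,d)} \otimes V^e \otimes \bar{V}^f$ indexed by $(e,f,d)$ with $e+f+2d = a$ are pairwise non-isomorphic.

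Finally, tensoring over $B_{a,\mb{K}}$ as in the definition of $\ml{E}_{a,b}^{i}$ kills all off-diagonal pairings by Schur's lemma, leaving only the diagonal contributions. Reindexing by the triples $(e,f,d)$ with $e+f+2d = a$ yields the stated isomorphism
\begin{equation} \nonumber
H^i(\ml{E}_{a,b}^{*}) \cong \bigoplus_{e+f+2d = a,\ d,e,f \geq 0} Hom_{\DM_{gm}(k, \mb{K})}(M^e \otimes \bar{M}^f, \mb{K}(a-b-d)[i]) \otimes (V^e \otimes \bar{V}^f)(d + b - a).
\end{equation}
The main technical obstacle is checking the compatibility of the $B_{a,\mb{K}}$-module structure on the cycle cohomology side (induced geometrically from the $\Sigma_a$-action and the diagonal $\mb{T}_{\mb{K}}$-action) with the one appearing in Ancona's decomposition of $M_1(E)_{\mb{K}}^{\otimes a}$; once this identification is secured, the final bookkeeping with multiplicity spaces is entirely parallel to the last step in the proof of Lemma \ref{Basic lemma I}.
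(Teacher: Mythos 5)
Your proposal is correct and follows exactly the route the paper intends: the paper omits the proof of this lemma, stating only that it is obtained by repeating the argument of Lemma \ref{Basic lemma I} with $\mb{K}$-coefficients, and your adaptation (the $\mb{K}$-coefficient version of Lemma \ref{Comp of cycle}, Ancona's splitting $M_1(E)_{\mb{K}} = M \oplus \bar{M}$ matched with $\V = V \oplus \bar{V}$, and Schur's lemma over $B_{a,\mb{K}}$ to kill off-diagonal terms) is precisely that argument. No substantive gap; the compatibility of the geometric $B_{a,\mb{K}}$-action with Ancona's decomposition that you flag is indeed the only point needing care, and it is handled at the same level of detail as in the non-CM case.
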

\begin{cor}  \label{0-th vanishing cm}
If the $0$-vanishing property holds for an elliptic curve $E$, then for any $a > 0$, the cohomolgies of $\ml{E}_{2a,a}^{*}$ are all isomorphic to the direct sum of $a+1$ trivial $T_{\mb{K}}$-representations $\mb{K}$ concentrated in degree zero. Therefore, for any $a > 0$, the cohomolgies of $\ml{A}_{2a,a}^{*}$ are all isomorphic to the trivial $T_{\mb{K}}$-representations $\mb{K}$ concentrated in degree zero.
\end{cor}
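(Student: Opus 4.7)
The strategy parallels the non-CM argument of Corollary \ref{0-th vanishing}. First, apply Lemma \ref{Basic lemma II} with $(a,b)=(2a,a)$ to get
\[
H^{*}(\ml{E}^{*}_{2a,a}) \;\cong\; \bigoplus_{\substack{e+f+2d=2a\\ e,f,d\ge 0}} Hom_{\DM_{gm}(k,\mb{K})}(M^{e}\otimes\bar{M}^{f},\, \mb{K}(a-d)[*])\otimes (V^{e}\otimes\bar{V}^{f})(d-a).
\]
Next invoke the identification $M\otimes\bar{M}\cong\mb{K}(1)$ from Example \ref{ex for nonvan}; combined with Voevodsky's cancellation theorem, this rewrites $M^{e}\otimes\bar{M}^{f}$ as $M^{e-f}(f)$ when $e\ge f$ and as $\bar{M}^{f-e}(e)$ when $f\ge e$, so the motivic hom-group collapses to a single expression.

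Now analyze each summand. When $e>f$, set $2i=e-f>0$ (parity follows from $e+f=2a-2d$ being even), and use $a-d-f=(e-f)/2=i$ to write the hom-group as $Hom_{\DM_{gm}(k,\mb{K})}(M^{2i},\mb{K}(i)[j])$. The $0$-th vanishing property (Definition \ref{r-th van}, CM case) kills this for $j\le 0$. For $j>0$, since $M^{2i}$ is a direct summand of $M(E^{2i})[-2i]$, the hom-group is a direct summand of $H^{j+2i}(E^{2i},\mb{K}(i))$, which vanishes because the cohomological degree $j+2i$ strictly exceeds twice the weight $2i$. The case $f>e$ is symmetric, so only summands with $e=f$ survive.

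Finally, the condition $2e+2d=2a$ forces $e+d=a$, producing exactly $a+1$ triples with $e\in\{0,1,\ldots,a\}$. For each, $M^{e}\otimes\bar{M}^{e}\cong\mb{K}(e)$, and since $a-d-e=0$, the hom-group $Hom_{\DM_{gm}(k,\mb{K})}(\mb{K}(e),\mb{K}(a-d)[j])=Hom_{\DM_{gm}(k,\mb{K})}(\mb{K},\mb{K}[j])$ equals $\mb{K}$ for $j=0$ and is zero otherwise. The representation factor is $(V\otimes\bar{V})^{e}\otimes\det^{d-a}$; since $V\otimes\bar{V}$ is the restriction of $\det$ from $GL_{2}$ to $\mb{T}_{\mb{K}}$, this is $\det^{e+d-a}=\det^{0}$, the trivial $\mb{T}_{\mb{K}}$-representation. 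Summing over the $a+1$ triples yields the stated description.

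The principal obstacle is the vanishing of $Hom_{\DM_{gm}(k,\mb{K})}(M^{2i},\mb{K}(i)[j])$ for $j>0$, which is not covered by the stated $0$-th vanishing hypothesis and requires the auxiliary fact that motivic cohomology of a smooth variety vanishes in cohomological degrees strictly larger than twice the weight.
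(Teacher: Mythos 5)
Your treatment of $\ml{E}^{*}_{2a,a}$ is correct and is essentially the argument the paper intends: the paper omits the CM proof and points to the non-CM Corollary \ref{0-th vanishing}, whose proof likewise runs through the basic lemma (here Lemma \ref{Basic lemma II}) and the $0$-th vanishing property. Your bookkeeping checks out: $e+f$ even forces $e-f=2i$ even, the twist computation $a-d-f=i$ is right, the $e=f$ terms give exactly $a+1$ copies of $Hom_{\DM_{gm}(k,\mb{K})}(\mb{K},\mb{K}[*])\otimes\det^{e+d-a}=\mb{K}$ in degree zero, and you correctly note $V\otimes\bar V=\det|_{\mb{T}_{\mb{K}}}$. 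You also make explicit a point the paper glosses over even in the non-CM case, namely the vanishing for $j>0$ via the fact that motivic cohomology of the smooth variety $E^{2i}$ vanishes in degrees above twice the weight; this is a genuine improvement in rigor, not a deviation in method.

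The one gap is that you never address the second assertion of the corollary, about $\ml{A}^{*}_{2a,a}$. That claim is not a formal consequence of the statement for $\ml{E}^{*}_{2a,a}$; it needs its own (one-line) argument: by Lemma \ref{Basic lemma III}, $H^{*}(\ml{A}^{*}_{2a,a})$ is the sum over $e+f=2a$ of $Hom_{\DM_{gm}(k,\mb{K})}(M^{e}\otimes\bar M^{f},\mb{K}(a)[*])\otimes(V^{e}\otimes\bar V^{f})(-a)$, i.e.\ only the $d=0$ terms of your decomposition, and your analysis shows that exactly the single term $e=f=a$ survives, giving one copy of the trivial representation $\mb{K}$ in degree zero. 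Adding that sentence completes the proof.
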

\begin{lem}\label{Basic lemma III}
The cohomologies of $\ml{A}^{*}_{a,b}$ are canonically isomorphic to the cohomologies of the following complex of $T_{\mb{K}}$-representations $$\bigoplus_{e+f=a, e, f \geq 0}Hom_{\DM_{gm}(k, \mb{K})}(M^e \otimes \bar{M}^f, \mb{K}(a-b)[*]) \otimes (V^e \otimes \bar{V}^f)(b-a),$$
where we view its differential maps as zero.
\end{lem}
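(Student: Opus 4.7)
The plan is to mimic the argument of Lemma \ref{Basic lemma II} verbatim, with the single change that $\ml{A}^*_{a,b}$ isolates the top Schur component (indexed by $c = a$, $d = 0$ in the non-CM parameterisation of Lemma \ref{Basic lemma I}) rather than carrying all partitions of $a$. First I would establish the CM analogue of Lemma \ref{Comp of cycle} with $\mb{K}$-coefficients, namely
\[
H^{i}\!\bigl(\widetilde{C}^{Alt,-}_{a-2b-*}(z_{q.fin}(\mb{A}^{a-b}))(E^{a})\bigr) \;\cong\; \mathrm{Hom}_{\DM_{gm}(k,\mb{K})}\bigl(M_{1}(E)_{\mb{K}}^{\otimes a}, \mb{K}(a-b)[i]\bigr),
\]
as $\Sigma_{a}$-equivariant $\mb{K}$-modules. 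The proof given via bivariant cycle cohomology (Proposition 5.8 and Theorem 8.3 of \cite{FSV}) together with Theorem \ref{Comp alt} goes through unchanged after extending scalars from $\mb{Q}$ to $\mb{K}$.

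Next I would invoke Ancona's decomposition $M_{1}(E)_{\mb{K}} \cong M \oplus \bar{M}$ (Remark \ref{Basic decom of elliptic motiveII}). Applying it to the right-hand side of the above isomorphism gives a decomposition of $\mathrm{Hom}_{\DM_{gm}(k,\mb{K})}(M_{1}(E)_{\mb{K}}^{\otimes a}, \mb{K}(a-b)[i])$ as a $B_{a,\mb{K}}$-module mirroring the decomposition $\V^{\otimes a} = (V \oplus \bar{V})^{\otimes a}$ of the representation side. Forming $\otimes_{B_{a,\mb{K}}} c_{a}(\V^{\otimes a})(b-a)$ as in the definition of $\ml{A}^*_{a,b}$ then pairs up motivic and representation-theoretic factors with identical combinatorial labels.

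Finally, because the irreducible constituents $V^{\otimes e}\otimes\bar{V}^{\otimes f}$ (for $e+f=a$) are pairwise non-isomorphic irreducibles for the $\mb{K}$-algebra appearing in Remark \ref{Basic decom of elliptic motiveII}, the tensor product over $B_{a,\mb{K}}$ collapses via the standard orthogonality identity $V \otimes_{\mb{K}[G]} W \cong \mb{K}\cdot\mathbf{1}_{V\cong W}$. This yields exactly
\[
\bigoplus_{e+f=a,\ e,f \geq 0} \mathrm{Hom}_{\DM_{gm}(k,\mb{K})}\bigl(M^{e}\otimes \bar{M}^{f}, \mb{K}(a-b)[*]\bigr) \otimes (V^{e}\otimes \bar{V}^{f})(b-a),
\]
and since the complex on the right is a graded direct sum of cohomology groups, it carries the zero differential tautologically.

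The main obstacle, such as it is, lies in the bookkeeping of multiplicities and in verifying that the $B_{a,\mb{K}}$-action on the cycle cohomology really does match the $B_{a,\mb{K}}$-action on $c_{a}(\V^{\otimes a})$ on the nose — this amounts to naturality of the identification in the first step with respect to the $\Sigma_{a}$-action (which is built into the bivariant cycle cohomology formalism) combined with the $\mb{K}$-linearity coming from the endomorphisms of $E$. Once that naturality is in place, no further computation is needed beyond the Schur-orthogonality collapse.
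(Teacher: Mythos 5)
Your proposal is correct and takes essentially the approach the paper itself intends: the paper omits the proofs in Section \ref{sec-compcm}, stating they parallel the non-CM computations, and your argument is exactly that adaptation — the $\mb{K}$-coefficient version of Lemma \ref{Comp of cycle}, Ancona's decomposition $M_1(E)_{\mb{K}} \cong M \oplus \bar{M}$ from Remark \ref{Basic decom of elliptic motiveII} in place of Lemma \ref{Basic decom of elliptic motive}, and the orthogonality collapse over $B_{a,\mb{K}}$ in place of $\mb{Q}[\Sigma_a]$ as in Lemma \ref{Basic lemma I}. The only cosmetic point is that $B_{a,\mb{K}}$ is not a group algebra, but the identity you invoke holds for any split semisimple $\mb{K}$-algebra (as the paper itself remarks), so nothing is lost.
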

\begin{lem} \label{beta}
Via the identification of Lemma \ref{Basic lemma III}, the map: 
$$\bar{\eta}: \ml{A}^{*}_{a,b} \to \ml{A}^{*}_{a+2,b+1},$$
induces the following map on cohomology groups:
\begin{equation} \nonumber
\begin{split}
& (\oplus_{e+f=a, e, f \geq 0}Hom_{\DM_{gm}(k, \mb{K})}(M^e \otimes \bar{M}^f, \mb{K}(a-b)[*]) \otimes (V^e \otimes \bar{V}^f)(b-a)) \\
& \otimes (Hom_{\DM_{gm}(k, \mb{K})}(M \otimes \bar{M}, \mb{K}(1)) \otimes (V \otimes \bar{V} \otimes det^{-1}))\\ 
\to & (\oplus_{e+f=a+2, e, f \geq 0}Hom_{\DM_{gm}(k, \mb{K})}(M^e \otimes \bar{M}^f, \mb{K}(a-b+1)[*]) \otimes (V^e \otimes \bar{V}^f)(b-a-1)).
\end{split}
\end{equation}
Moreover, the map on cohomology groups induces by $\eta$ is a monomorphism in the category of $T_{\mb{K}}$ representations.
\end{lem}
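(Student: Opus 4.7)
The plan is to mirror the proof of Lemma \ref{eta} essentially verbatim, substituting the CM data: replace $\V^{\otimes 2}(-1)$ with $V \otimes \bar{V}(-1)$, replace $\frac{1}{2}(\Delta^+ - \Delta^-)$ with the cycle $\frac{1}{2}(\Gamma_\iota - \Gamma_{\iota \circ (-1)})$ from Example \ref{E21cm}, and use $M \otimes \bar{M} \cong \mb{K}(1)$ from Example \ref{ex for nonvan} in place of $\wedge^2 M_1(E) \cong \mb{Q}(1)$.

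First, I would invoke Example \ref{E21cm}, which identifies $\ml{E}^*_{2,1}$ (i.e.\ the building block $\ml{A}^*_{2,1}$) with the trivial $\mb{T}_{\mb{K}}$-representation $\mb{K}$ concentrated in degree zero, generated by $\frac{1}{2}(\Gamma_\iota - \Gamma_{\iota\circ(-1)}) \otimes w$ for a generator $w$ of $V \otimes \bar{V}(-1)$. Under the identification in Example \ref{ex for nonvan} this class corresponds to the identity element of $Hom_{\DM_{gm}(k,\mb{K})}(M \otimes \bar{M}, \mb{K}(1))$.

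Next, using the CM analogue of Lemma \ref{Comp of cycle algebra with motive} (obtained by the same bivariant-cycle-cohomology argument of Lemma \ref{Comp of cycle}, applied after base change of coefficients to $\mb{K}$), I would identify the map $\bar{\eta}$ on cohomology as the composition of
\begin{equation} \nonumber
Hom_{\DM_{gm}(k, \mb{K})}(M^e \otimes \bar{M}^f, \mb{K}(a-b)[*]) \xrightarrow{\;- \otimes \mathrm{id}_{\mb{K}(1)}\;} Hom_{\DM_{gm}(k, \mb{K})}(M^{e+1} \otimes \bar{M}^{f+1}, \mb{K}(a-b+1)[*])
\end{equation}
with the injection $(V^e \otimes \bar V^f)(b-a) \hookrightarrow (V^{e+1} \otimes \bar V^{f+1})(b-a-1)$ induced by the inclusion $\mb{K} \hookrightarrow V \otimes \bar V(-1)$, summed over $e+f = a$.

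Finally, Voevodsky's cancellation theorem in \cite{V10} implies that tensoring with $\mathrm{id}_{\mb{K}(1)}$ is an isomorphism on each Hom-group, so $\bar{\eta}$ is injective on every direct summand indexed by $(e,f)$ with $e+f = a$. Since the reindexing $(e,f) \mapsto (e+1, f+1)$ embeds $\{(e,f) : e+f=a\}$ into $\{(e',f') : e'+f'=a+2\}$, summing over summands preserves injectivity and yields a monomorphism of $\mb{T}_{\mb{K}}$-representations. There is no genuine obstacle: the only point demanding attention is bookkeeping the bi-grading $(a,b)$ against the $(e,f)$-decomposition so that the source summand $(e,f)$ maps isomorphically onto the $(e+1,f+1)$ summand of the target.
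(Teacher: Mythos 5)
Your proposal is correct and coincides with the paper's intended argument: the paper omits the proof of this lemma, stating that the CM-case proofs are analogous to those of Section \ref{sec-comp}, and your adaptation of the proof of Lemma \ref{eta} (Example \ref{E21cm} for $\ml{E}^*_{2,1}$, the CM analogue of Lemma \ref{Comp of cycle algebra with motive}, and Voevodsky's cancellation theorem for injectivity on each $(e,f)$-summand) is exactly that adaptation.
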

\begin{cor} \label{r-th vanishing cm}
(Compare with Corollary \ref{r-th vanishing}) If an elliptic curve $E$ satisfies the $r$-th vanishing property for all positive integer $r$, then all the $H^*(\ml{E}^{*}_{-r})$ are zero. Furthermore, if the elliptic curve $E$ satisfies the $r$-th vanishing property for all non-negative integer $r$, then we have $H^*(\ml{E}^{*}) = H^*(\ml{E}_{ell}^{*})$. 
\end{cor}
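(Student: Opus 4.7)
The proof parallels that of Corollary \ref{r-th vanishing}, replacing each non-CM input with its CM counterpart. For the first assertion, fix $r > 0$ and apply Lemma \ref{Basic lemma II} with $(a, b) = (r + 2i, r + i)$ to obtain
\begin{equation*}
H^*(\ml{E}^*_{r+2i, r+i}) \cong \bigoplus_{e+f+2d = r+2i} Hom_{\DM_{gm}(k, \mb{K})}(M^e \otimes \bar{M}^f, \mb{K}(i-d)[*]) \otimes (V^e \otimes \bar{V}^f)(d-i).
\end{equation*}
Using the isomorphism $M \otimes \bar{M} \cong \mb{K}(1)$ from Example \ref{ex for nonvan}, each $M^e \otimes \bar{M}^f$ (WLOG $e \geq f$) reduces to $M^{e-f}(f)$, so the relevant Hom group becomes $Hom(M^{e-f}, \mb{K}(i-d-f)[*])$; the constraint $e+f+2d = r+2i$ rewrites as $e-f = r + 2i_*$ with $i_* := i - d - f$. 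For $i_* < 0$ the Hom vanishes automatically by negative-weight vanishing of motivic cohomology, and for $i_* \geq 0$ the CM $r$-th vanishing property of Definition \ref{r-th van} yields zero for $* \leq -r$.

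To cover the range $* > -r$, I would invoke a degree-counting observation: by definition $\ml{E}^{*}_{r+2i, r+i}$ involves the cubical cycle complex $\widetilde{C}^{Alt,-}_{-r-*}(z_{q.fin}(\mb{A}^{i}))(E^{r+2i})$, which is identically zero whenever the simplicial index $-r-*$ is negative. Combining both ranges gives $H^*(\ml{E}^*_{r+2i, r+i}) = 0$ for all $*$; passing to the filtered colimit (which commutes with cohomology) yields $H^*(\ml{E}^*_{-r}) = 0$, which is the first assertion.

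For the second assertion, Corollary \ref{0-th vanishing cm} determines $H^*(\ml{E}^*_{2a, a})$, and the CM analog of Lemma \ref{eta} --- constructed along the same lines as Lemma \ref{beta} but on the $\ml{E}$-side, using Example \ref{E21cm} and Remark \ref{existence of beta} --- describes the connecting map $\tau$ on cohomology as a monomorphism that is the identity on each already-present isotypical component. Once the filtered colimit identification $H^*(\ml{E}^*_0) \cong \mb{K}$ (placed in degree zero) is in hand, the vanishing result above, combined with the decompositions $\ml{E}^* = \mb{K} \oplus \bigoplus_{a \geq 1} \ml{E}^*_a$ and $\ml{E}^*_{ell} = \bigoplus_{a \in \mb{Z}} \ml{E}^*_a$, immediately gives $H^*(\ml{E}^*) = H^*(\ml{E}^*_{ell})$.

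The hardest step is the bookkeeping in the second assertion: Corollary \ref{0-th vanishing cm} shows that $H^*(\ml{E}^*_{2a,a}) \cong \mb{K}^{\oplus(a+1)}$ with multiplicity growing in $a$, so one must verify that the connecting maps $\tau$ act as the identity on each repeated isotypical copy --- parallel to how Lemma \ref{eta} forces $\eta$ to be the identity on each summand in the non-CM case --- so that the colimit collapses to a single trivial $\mb{K}$. This reduces to tracing the effect of multiplication by the cycle $\frac{1}{2}(\Gamma_\iota - \Gamma_{\iota \circ (-1)})$ from Example \ref{E21cm} on explicit representatives, reusing $M \otimes \bar{M} \cong \mb{K}(1)$ and the identification $V \otimes \bar{V} \cong \det$ of $T_{\mb{K}}$-representations.
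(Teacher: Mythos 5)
Your treatment of the first assertion is correct and is exactly the route the paper intends (the paper omits all proofs in Section \ref{sec-compcm}, declaring them parallel to Section \ref{sec-comp}): Lemma \ref{Basic lemma II} applied to $(r+2i,r+i)$, the reduction of mixed terms via $M \otimes \bar{M} \cong \mb{K}(1)$ (Example \ref{ex for nonvan}) so that each summand takes the exact form covered by the CM clause of Definition \ref{r-th van}, negative-weight vanishing when $i_{*}<0$, and the observation that the cubical complex is zero in degrees $*>-r$ --- a point the paper glosses over even in the non-CM proof of Corollary \ref{r-th vanishing} --- then passage to the filtered colimit. No objection there.

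The second assertion, however, contains a step that fails as described. You compute the Adams-degree-zero levels from Lemma \ref{Basic lemma II} and the first clause of Corollary \ref{0-th vanishing cm}, so $H^{*}(\ml{E}^{*}_{2i,i}) \cong \mb{K}^{\oplus (i+1)}$, and you propose that the connecting maps $\tau$, being monomorphisms acting "as the identity on each repeated isotypical copy", make the colimit collapse to a single trivial $\mb{K}$. That mechanism cannot work: in a filtered system with injective transition maps each term injects into the colimit, so if $H^{0}$ of the $i$-th level has dimension $i+1$ and the maps on cohomology are monomorphisms, the colimit has dimension at least $i+1$ for every $i$ and is therefore infinite-dimensional, never a single $\mb{K}$; injective maps can only add copies, never identify the redundant ones. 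The collapse has to happen at each finite level, not in the colimit. This is precisely the role of Ancona's idempotent $c_{n}$ in the CM case of Definition \ref{E^{a,b}}, and of the objects $\ml{A}^{*}_{a,b}$ appearing in Lemma \ref{Basic lemma III}, the second clause of Corollary \ref{0-th vanishing cm} and Lemma \ref{beta}: for the complexes cut out by $c_{n}$ the decomposition runs only over $e+f=a$, each character $V^{e}\otimes \bar{V}^{f}$ occurs with multiplicity one, the Adams-degree-zero level is already a single trivial $\mb{K}$ generated by (powers of) the class of $\frac{1}{2}(\Gamma_{\iota} - \Gamma_{\iota \circ (-1)})$, and $\bar{\eta}$ (resp. $\tau$) is an isomorphism on it by cancellation, so the colimit is $\mb{K}$ and the comparison of $\ml{E}^{*}$ with $\ml{E}^{*}_{ell}$ goes through verbatim as in Corollary \ref{r-th vanishing}. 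As written, your bookkeeping for the growing multiplicities is the one step that genuinely does not close, and it cannot be repaired by analyzing the action of $\frac{1}{2}(\Gamma_{\iota} - \Gamma_{\iota \circ (-1)})$ on representatives alone; one must work with the $c_{n}$-truncated (multiplicity-one) complexes from the start.
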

\begin{cor} \label{cycle algebra T_K identification}
(Compare with Corollary \ref{cycle algebra GL_2 identification}) Let $a$ be any integer. In the derived category of $T_{\mb{K}}$ representations,  we have the following isomorphisms:
\begin{equation} \nonumber
\begin{split}
H^*(\ml{E}^{*}_{a}) \cong 
& \bigoplus_{i \geq 0, a\equiv i(2)} Hom_{\DM_{gm}(k, \mb{K})}(M^i, \mb{K}(\frac{a+i}{2})[*]) \otimes V^i(-\frac{a+i}{2}) \\
& \oplus  \bigoplus_{j > 0, a\equiv j(2)} Hom_{\DM_{gm}(k, \mb{K})}(\bar{M}^j, \mb{K}(\frac{a+j}{2})[*]) \otimes \bar{V}^i(-\frac{a+j}{2}).
\end{split}
\end{equation}
\end{cor}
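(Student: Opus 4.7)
The plan is to imitate the proof of Corollary \ref{cycle algebra GL_2 identification} step by step, replacing the non-CM tools by their CM counterparts stated just above. First I would apply Lemma \ref{Basic lemma II} with $b = -a+i$ to identify
\[
H^{*}(\ml{E}^{*}_{-a+2i,\,-a+i}) \;\cong\; \bigoplus_{e+f+2d = -a+2i} Hom_{\DM_{gm}(k,\mb{K})}(M^{e}\otimes\bar{M}^{f},\mb{K}(i-d)[*]) \otimes (V^{e}\otimes\bar{V}^{f})(d-i).
\]
Because the hom-groups vanish for weight reasons unless the Tate twist is consistent with $e,f,d$, only finitely many summands contribute in each cohomological degree, so there is no issue with the direct sum.

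Next, I would use Example \ref{ex for nonvan}(iii), namely $M\otimes\bar{M}\cong\mb{K}(1)$, and Voevodsky's cancellation theorem to rewrite every mixed term as a pure tensor power: if $e\ge f$, set $i'=e-f$ and
\[
Hom(M^{e}\otimes\bar{M}^{f},\mb{K}(n)[*]) \;\cong\; Hom(M^{i'},\mb{K}(n-f)[*]),
\]
and symmetrically with $\bar M$ if $f>e$. This repackages the triple index $(e,f,d)$ into a single pure index (either $i'\ge 0$ for an $M$-summand or $j'>0$ for a $\bar M$-summand), at the cost of keeping track of the accumulated Tate twist. A book-keeping check with the parity condition $e+f+2d=-a+2i$ shows that only summands with $i'\equiv a\pmod 2$ (resp.\ $j'\equiv a\pmod 2$) occur, and that each such pure summand appears with the twist and multiplicity predicted by the right-hand side of the corollary.

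Then I would invoke Lemma \ref{beta} to describe the connecting map $\tau$ on cohomology. As in the proof of Corollary \ref{cycle algebra GL_2 identification}, the point is that $\tau$ is induced by multiplication against the generator of $\ml{E}^{*}_{2,1}$, which by Example \ref{E21cm} is (up to sign) the class of $\tfrac{1}{2}(\Gamma_{\iota}-\Gamma_{\iota\circ(-1)})$ in $V\otimes\bar{V}(-1)$. Combined with the cancellation from Step 2, Lemma \ref{beta} says that $\tau$ sends each pure summand indexed by $i'$ (respectively $j'$) in $H^{*}(\ml{E}^{*}_{-a+2i,-a+i})$ to the corresponding pure summand in $H^{*}(\ml{E}^{*}_{-a+2(i+1),-a+(i+1)})$ by the identity, after applying $V\otimes\bar V\otimes\det^{-1}\cong \mb{K}$ inside the representation factor and $M\otimes\bar M\cong\mb{K}(1)$ inside the hom-group. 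Taking the colimit $\ml{E}^{*}_{a}=\underrightarrow{\lim}_{i}\ml{E}^{*}_{-a+2i,-a+i}$ is therefore a stabilization that collects each pure summand exactly once, yielding the stated formula.

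The main obstacle will be the combinatorial bookkeeping: checking that, after the cancellation $M\otimes\bar M\cong\mb K(1)$ and the parity repackaging of $(e,f,d)$, the monomorphism of Lemma \ref{beta} really matches up summands across levels as the identity, so that the colimit neither repeats nor loses any pure $M^{i}$ or $\bar M^{j}$ piece and the Tate twists come out as $\mb K(\tfrac{a+i}{2})$ and $\mb K(\tfrac{a+j}{2})$. Once this identification of $\tau$ with the identity on each pure isotypical component is in place, the formula falls out by exactly the same colimit argument as in the non-CM case.
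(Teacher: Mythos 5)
Your overall template --- decompose each term of the colimit, identify the connecting map on each isotypical piece, then stabilize --- is the right one: the paper omits the CM proof as ``similar'' to that of Corollary \ref{cycle algebra GL_2 identification}, and that is the argument being imitated. But there is a genuine gap in your Step 2: the multiplicity claim fails for the decomposition you start from. If you apply Lemma \ref{Basic lemma II} at level $(A,B)=(-a+2i,-a+i)$ and then cancel via $M\otimes\bar M\cong\mb{K}(1)$ and $V\otimes\bar V\cong det$, then \emph{every} triple $(e,f,d)$ with $e-f=i'$ and $e+f+2d=A$ produces the \emph{same} pure piece $Hom_{\DM_{gm}(k,\mb{K})}(M^{i'},\mb{K}(\tfrac{a+i'}{2})[*])\otimes V^{i'}(-\tfrac{a+i'}{2})$, because the constraint forces $f+d=(A-i')/2$, so the accumulated twist does not depend on the triple. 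Hence at level $i$ this piece occurs with multiplicity $(A-i')/2+1$, not once --- compare Corollary \ref{0-th vanishing cm}, where $H^*(\ml{E}^{*}_{2a,a})$ is $a+1$ copies of $\mb{K}$ while $H^*(\ml{A}^{*}_{2a,a})$ is a single copy. Since the connecting map is $T_{\mb{K}}$-equivariant and, as you invoke from Lemma \ref{beta}, a monomorphism, it preserves weight spaces and sends the $(e,f,d)$-summand to the $(e+1,f+1,d)$-summand; an injective directed system whose multiplicities grow without bound has colimit of infinite multiplicity in each weight, so the bookkeeping you defer cannot come out as the multiplicity-one right-hand side of the corollary. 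In other words, with the inputs you chose (Lemma \ref{Basic lemma II} for the decomposition plus injectivity of the transition maps) the argument does not just become tedious, it fails.

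The repair --- and the intended proof parallel to the non-CM case --- is to use the multiplicity-one description from the start. In the CM case Definition \ref{E^{a,b}} tensors the cycle complex over $B_{n,\mb{K}}$ against $c_n(\V^{\otimes a})(b-a)$, and Ancona's theorem (Remark \ref{Basic decom of elliptic motiveII}) then gives the decomposition of Lemma \ref{Basic lemma III}, indexed only by pairs $(e,f)$ with $e+f=a$, each occurring exactly once; this is also the decomposition for which Lemma \ref{beta} is actually stated (the paper's object $\ml{A}^{*}_{a,b}$, as opposed to the naive $\Sigma_a$-coinvariant construction computed in Lemma \ref{Basic lemma II}). With that input your Step 3 is correct and completes the proof: the connecting map shifts $(e,f)\mapsto(e+1,f+1)$ and becomes the identity on each pure summand after the identifications $M\otimes\bar M\cong\mb{K}(1)$ (Example \ref{ex for nonvan}) and $V\otimes\bar V\otimes det^{\otimes -1}\cong\mb{K}$, so the colimit collects each $M^{i}$- and $\bar M^{j}$-piece exactly once, with the twists $\mb{K}(\tfrac{a+i}{2})$, $\mb{K}(\tfrac{a+j}{2})$, exactly as in Corollary \ref{cycle algebra GL_2 identification}.
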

\begin{lem} \label{fully-faithful cm}
(Compare with Lemma \ref{fully-faithful}) For $a,b,c,d,i \in \mb{Z}$,  we have:
\begin{equation} \nonumber
\begin{split}
& H^{i}(Hom_{T_{\mb{K}}}(\mb{K}, \ml{E}_{ell} \otimes \V^{\otimes -c} \otimes \V^{\otimes a}(-b+d))) \\
\cong & Hom_{\DM_{gm}(k, \mb{K})}((M_{1}(E))^{\otimes -a}(b), (M_{1}(E))^{\otimes -c}(d)[i]).
\end{split}
\end{equation}
\end{lem}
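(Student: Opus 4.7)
The plan is to follow the argument of Lemma \ref{fully-faithful} almost verbatim, replacing $GL_2$-representation theory by that of $T_{\mb{K}} = \mb{G}_{m,\mb{K}} \times \mb{G}_{m,\mb{K}}$. Since $T_{\mb{K}}$ has only one-dimensional irreducibles---indexed by the weights of $V$ and $\bar{V}$---all Schur-type combinatorics drop out and the multiplicities appearing in the decompositions below become mere products of binomial coefficients.

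First I would reduce to $b = d = 0$ and $a, c \geq 0$. In the non-CM argument this step uses $\V^\vee \cong \V(-1)$ and $M_1(E)^\vee \cong M_1(E)(-1)$. Here the analogous duality isomorphisms are $V^\vee \cong \bar{V}(-1)$ and $\bar{V}^\vee \cong V(-1)$ on the representation side (reflecting $V \otimes \bar{V} = \det$), together with $M^\vee \cong \bar{M}(-1)$ and $\bar{M}^\vee \cong M(-1)$ on the motivic side, both of which follow from the identification $M \otimes \bar{M} \cong \mb{K}(1)$ established in Example \ref{ex for nonvan}. Combined with Voevodsky's cancellation theorem, these allow one to assume $b = d = 0$ and $a, c \geq 0$.

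Next, expand $\V^{\otimes a} \otimes \V^{\otimes -c}$ as a $T_{\mb{K}}$-representation by substituting $\V = V \oplus \bar{V}$ and applying the dualities above to the $-c$ factors; the summands take the form $V^{\otimes (e+g)} \otimes \bar{V}^{\otimes (f+h)}(-c)$ with $e+f = a$, $g+h = c$, occurring with multiplicity $\binom{a}{e}\binom{c}{g}$. For each such summand one cancels $V \otimes \bar{V}$ pairs against determinant (equivalently, Tate) twists via $V \otimes \bar{V} = \det$ to bring it into the pure form $V^{\otimes p}(q)$ or $\bar{V}^{\otimes p}(q)$ required by Corollary \ref{cycle algebra T_K identification}, and that corollary then rewrites $H^i(\mathrm{Hom}_{T_{\mb{K}}}(\mb{K}, \ml{E}_{ell} \otimes \text{(summand)}))$ as a Hom-group $\mathrm{Hom}_{\DM_{gm}(k, \mb{K})}(M^{\otimes p'} \otimes \bar{M}^{\otimes q'}, \mb{K}(r)[i])$, after re-expanding via $M \otimes \bar{M} \cong \mb{K}(1)$. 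On the motivic side one performs the parallel expansion of $\mathrm{Hom}_{\DM_{gm}(k, \mb{K})}(M_1(E)^{\otimes a}, M_1(E)^{\otimes c}(a - c)[i])$ directly from $M_1(E) = M \oplus \bar{M}$, and the two decompositions match term-by-term with identical binomial multiplicities.

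The main obstacle will be pure bookkeeping: tracking the exponents of $\det$, $V$, $\bar{V}$ on one side and of Tate twists, $M$, $\bar{M}$ on the other, and verifying that the single dictionary $V \otimes \bar{V} \leftrightarrow \det \leftrightarrow M \otimes \bar{M} \leftrightarrow \mb{K}(1)$ is applied consistently throughout. Fortunately, because every irreducible $T_{\mb{K}}$-representation is one-dimensional, the combinatorics is strictly lighter than in the non-CM case---the Clebsch--Gordan numbers $D^i_{a,b}$ disappear entirely, leaving only binomial coefficients to balance, which simplifies the comparison considerably.
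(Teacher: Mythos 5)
Your proposal is correct and follows exactly the route the paper intends: the paper omits the proof of this lemma, stating that the CM computations are proved "similarly as before," i.e.\ by running the argument of Lemma \ref{fully-faithful} with $T_{\mb{K}}$-characters in place of $GL_2$-representations and with Corollary \ref{cycle algebra T_K identification} replacing Corollary \ref{cycle algebra GL_2 identification}, which is precisely what you do. The only point needing care is the one you already flag: the dualities $V^{\vee}\cong\bar{V}(-1)$, $M^{\vee}\cong\bar{M}(-1)$ swap the two isotypic pieces, so the term-by-term matching of the two expansions uses the symmetry $\binom{a}{e}=\binom{a}{a-e}$, after which the multiplicities agree as you claim.
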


\section{The motivic version of cycle cdgas in $\DM(k)$} \label{cycle alg}
\begin{df} \label{sheaf version}
Let $a, b$ be integers such that $a \geq b, a \geq 0$. 
\begin{itemize}
\item
For $i \in \mb{Z}$ and $T \in \Sm$ and $E$ an elliptic curve without CM, we define:
$$\mf{E}_{a, b}^{i}(T) = \widetilde{C}^{Alt , -}_{a - 2b - i}(z_{q.fin}(\mb{A}^{a - b}))(E^{a} \times T) \otimes_{\mb{Q}[\Sigma_{a}]}  \V^{\otimes a}(b-a).$$
\item
For $i \in \mb{Z}$ and $T \in \Sm$ and $E$ an elliptic curve with CM, we define:
$$\mf{E}_{a, b}^{i}(T) = \widetilde{C}^{Alt , -}_{a - 2b - i}(z_{q.fin}(\mb{A}^{a - b}))(E^{a} \times T) \otimes_{B_{n, \mb{K}}}  \V^{\otimes a}(b-a).$$
\end{itemize}
Then $\mf{E}_{a, b}^{i}$\footnote{This could be thought as the ``motivic version'' of the cycle algebra $\ml{E}_{a, b}^i$.} is a $\R$(resp $\mathbf{Rep}_{T_{\mb{K}}}$)-valued Nisnevich sheaf with transfers.
\end{df}
\begin{rmk} \label{com-trick}
From the definition, we have $\mf{E}^{*}_{a, b}(k) = \ml{E}^{*}_{a, b}$. In fact, by computations similar in Section \ref{sec-comp}, one can get the following isomorphism in $\DM_{gm}(k, \mb{Q})$:
$$\mf{E}_{a, b}^{*} \cong \Hom(M_1(E)^{\otimes a}, \mb{Q}(a-b)) \otimes_{\mb{Q}[\Sigma_{a}]} \V^{\otimes a}(b-a).$$
Here $\Hom$ is defined in Remark 14.12 in \cite{MVW}.
In CM case, we have the isomorphism in $\DM_{gm}(k, \mb{K})$:
$\mf{E}_{a, b}^{*} \cong \Hom(M_1(E)^{\otimes a}, \mb{K}(a-b)) \otimes_{B_{n, \mb{K}}} \V^{\otimes a}(b-a).$
\end{rmk}
\begin{rmk}
In the non-CM case, $\{\mf{E}^{*}_{a, b}\}$ is a cdga over $GL_2$ object in $C(\Shk)_{\mb{Q}}$. More precisely, for $S, T \in \Sm$, the external product of correspondences gives the following product map:
\begin{equation}
\begin{split}
&C_{a - 2b - i}(z_{q.fin}(\mb{A}^{a - b}))(E^{a} \times S) \otimes C_{c - 2d - i}(z_{q.fin}(\mb{A}^{c - d}))(E^{c} \times S) \\
\longrightarrow & C_{a+c - 2b- 2d - i}(z_{q.fin}(\mb{A}^{a +c - b - d}))(E^{a} \times S \times E^{c} \times T)
\end{split}
\end{equation}
Taking the alternating projection with respect to the component $\square$, $-$ part with respect to the component $E$ and symmetric projection with respect to the component $\mb{A}$ yields:
\begin{equation}
\begin{split}
&\widetilde{C}^{Alt , -}_{a-2b-i}(z_{q.fin}(\mb{A}^{a - b}))(E^{a} \times S) \otimes \widetilde{C}^{Alt , -}_{c - 2d - i}(z_{q.fin}(\mb{A}^{c - d}))(E^{c} \times S) \\
\longrightarrow & \widetilde{C}^{Alt , -}_{a+c - 2b- 2d - i}(z_{q.fin}(\mb{A}^{a +c - b - d}))(E^{a} \times S \times E^{c} \times T)
\end{split}
\end{equation}
Then we get the map as in (\ref{mul-str}):
\begin{equation}
\cdot: \mf{E}^*_{a,b} \otimes \mf{E}^*_{c,d} \to \mf{E}^*_{a+c, b+d}.
\end{equation}
As before, one may check this map is associative and graded commutative. In the CM case, similarly we can show that $\{\mf{E}^{*}_{a, b}\}$ is a cdga over $T_{\mb{K}}$ object in $C(\Shk)_{\mb{K}}$.
\begin{rmk}
When $E$ is an elliptic curve without CM, one important observation is:
\begin{equation} \nonumber
\begin{split} 
& H^*(\mf{E}_{2, 1}^{*}) \cong \Hom(M_1(E)^{\otimes 2}, \mb{Q}(1)) \otimes_{\mb{Q}[\Sigma_{2}]}  \V^{\otimes 2}(-1) \\
\cong & \Hom(Sym^2(M_1(E)), \mb{Q}(1)) \otimes  Sym^2\V(-1) \oplus \mb{Q} \cong \mb{Q} \in DM^{eff}(k, \mb{Q}).
\end{split}
\end{equation}
This computation relies on Proposition 13.7 in \cite{MVW} and the fact that: for any field $k^{'}$,
$$\Hom(Sym^2(M_1(E)), \mb{Q}(1))(k^{'}) \cong 0,$$
whose proof is the same in Example \ref{ex for van}. If $E$ is an elliptic curve with CM, then $$H^*(\mf{E}_{2, 1}^{*}) \cong \mb{K} \otimes (V \otimes \bar{V}(-1)) \cong \mb{K}.$$
\end{rmk}
Similarly using the multiplicative structure, we have: 
$$\eta: \mf{E}^*_{a,b} \to \mf{E}^*_{a+2,b+1}.$$
Furthermore, if $E$ is with CM, we have a map as explained in Remark \ref{existence of beta}:
$$\tau: \mf{E}^*_{a, b} \to \mf{E}^*_{a+2, b+1}.$$
We now define $\mf{E}^{*}$ as in Definition \ref{df of elliptic-alg}.
\begin{df}  \label{df of motivic elliptic-alg}
Given $a \in \mb{Z}$, 
\begin{itemize}
\item
for an elliptic curve without CM, we define: $$\mf{E}_{a}^{*} =   \underrightarrow{lim}_{i \geq -a} \  \mf{E}^{*}_{-a+2i, -a+i},$$
where the colimits are taken over the map $\eta$.
\item
for an elliptic curve with CM, we define:$$\mf{E}_{a}^{*} =   \underrightarrow{lim}_{i \geq -a} \  \mf{E}^{*}_{-a+2i, -a+i},$$
where the colimits are taken over the map $\tau$.
\end{itemize}
Then we denote:
$$\mf{E}^{*} = \mb{Q} \oplus \bigoplus_{a \geq 1}\mf{E}_{a}^{*}.$$
and
$$\mf{E}_{ell}^{*} = \bigoplus_{a \in \mb{Z}} \mf{E}^{*}_{a}.$$   
\end{df}
\end{rmk}
\begin{prop} \label{cdga object in DM}
\begin{itemize}
\item
If $E$ is an elliptic curve without CM,  then $\mf{E}^{*}$ and $\mf{E}_{ell}^{*}$ are commutative monoids in the category of complexes of $\R$-valued Nisnevich sheaves with transfers.
\item
If $E$ is an elliptic curve with CM,  then $\mf{E}^{*}$ and $\mf{E}_{ell}^{*}$ are commutative monoids in the category of complexes of $T_{\mb{K}} (= \mb{G}_{m, \mb{K}} \times \mb{G}_{m, \mb{K}})$-valued Nisnevich sheaves with transfers.
\end{itemize}
\end{prop}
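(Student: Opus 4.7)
The plan is to reduce everything to the pointwise arguments of Section \ref{elliptic cycle algebra}, parametrized by $T \in \Sm$. Every ingredient used there---external product of quasi-finite cycles on $E^a$, the alternation on the cube coordinates, the sign-$(-)$ projection for the inversion action, symmetrization over $\Sigma_{a-b}$ on $\mb{A}^{a-b}$, and descent along $-\otimes_{\mb{Q}[\Sigma_a]} \V^{\otimes a}(b-a)$ (or $-\otimes_{B_{n,\mb{K}}} c_n\V^{\otimes a}(b-a)$)---commutes with pullback of quasi-finite correspondences along maps $T'\to T$, because external product of cycles is bifunctorial in both arguments and all the projectors are $\mb{Q}$-linear idempotents. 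Thus the multiplication map
\[
\mu^{a,b}_{c,d}: \mf{E}^*_{a,b}\otimes \mf{E}^*_{c,d}\longrightarrow \mf{E}^*_{a+c,b+d}
\]
defined sectionwise in the Remark just before the proposition promotes to a morphism of complexes of Nisnevich sheaves with transfers valued in $\R$ (resp.\ $\mathbf{Rep}_{T_{\mb{K}}}$).

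For associativity, graded commutativity, and Leibniz compatibility with the differential, the proofs of Lemma \ref{mul} and Lemma \ref{diff} apply verbatim on sections: they rely only on the associativity of external products, on the compatibility between the injection $\Sigma_a\times\Sigma_c\hookrightarrow \Sigma_{a+c}$ (resp.\ $B_{a,\mb{K}}\otimes B_{c,\mb{K}}\to B_{a+c,\mb{K}}$) and the permutation $\sigma$ swapping the first $a$ factors with the last $c$, and on the Koszul sign rule. Since tensor products and finite limits in $C(\Shk)_{\mb{Q}}$ are computed sectionwise on representables, the sectionwise identities promote to sheaf-level identities, so each $\{\mf{E}^*_{a,b}\}$ is a bi-graded cdga in the respective sheaf category.

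To pass to $\mf{E}^*_a$, $\mf{E}^*$, and $\mf{E}^*_{ell}$, it remains to see that the transition maps $\eta$ (resp.\ $\tau$) in Definition \ref{df of motivic elliptic-alg} are compatible with $\mu$. But $\eta$ (resp.\ $\tau$) is itself defined as multiplication by the distinguished cycle class computed in Example \ref{E21} (resp.\ Example \ref{E21cm}) that generates $H^*(\mf{E}^*_{2,1})$, so the associativity already established yields the three commutative diagrams displayed after Definition \ref{df of cycle algebra}, now at the sheaf level. Since filtered colimits commute with tensor products in $C(\Shk)_{\mb{Q}}$, these diagrams let $\mu$ descend to products $\mf{E}^*_a\otimes \mf{E}^*_b\to \mf{E}^*_{a+b}$, which assemble via the direct-sum decompositions of Definition \ref{df of motivic elliptic-alg} into cdga structures on $\mf{E}^*$ and $\mf{E}^*_{ell}$. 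The unit is the canonical inclusion of the summand $\mb{Q}$ (or $\mb{K}$ in the CM case), which acts trivially because the distinguished cycle in $\mf{E}^*_{2,1}$ was chosen to have cohomology class the identity of $\mb{Q}(1)$.

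The main point that requires bookkeeping is the CM case: since we tensor over $B_{n,\mb{K}}$ with the idempotent $c_n$ applied rather than over a group algebra, one must verify that the external product of cycle classes is compatible with the natural map $c_a\boxtimes c_c\to c_{a+c}$ obtained via the inclusion $B_{a,\mb{K}}\otimes B_{c,\mb{K}}\hookrightarrow B_{a+c,\mb{K}}$ of Ancona (Section 3.9 of \cite{A}), so that $\mu^{a,b}_{c,d}$ lands in the correct $c_{a+c}$-isotypical piece. This is a purely representation-theoretic check that parallels, but does not interact with, the cycle-theoretic arguments already in place for the non-CM case; once it is recorded, the rest of the proof proceeds identically in both settings.
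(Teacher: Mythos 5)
Your proposal is correct, and it is essentially the argument the paper has in mind: the paper's own proof of Proposition \ref{cdga object in DM} is just a pointer to Section 4.3 of \cite{L} (the Tate-motive analogue), while the sheaf-level multiplication and the claim that it is associative and graded commutative are already sketched in the remark preceding the proposition; your reduction to the sectionwise Lemmas \ref{mul} and \ref{diff}, plus the compatibility of the products with $\eta$ (resp.\ $\tau$) and with filtered colimits so that the structure descends to $\mf{E}^{*}_{a}$, $\mf{E}^{*}$ and $\mf{E}^{*}_{ell}$, is exactly that adaptation, in both the non-CM and CM cases. One phrase should be repaired: the tensor product $\otimes^{tr}$ in $C(\Shk)_{\mb{Q}}$ is \emph{not} computed sectionwise. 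The correct justification is the universal property of $\otimes^{tr}$ (a left Kan extension from representables): a morphism out of $\mf{E}^{*}_{a,b}\otimes^{tr}\mf{E}^{*}_{c,d}$, and any identity between such morphisms --- including graded commutativity, where the symmetry constraint also swaps the two base factors --- is determined by the external pairings $\mf{E}^{*}_{a,b}(S)\otimes\mf{E}^{*}_{c,d}(T)\to\mf{E}^{*}_{a+c,b+d}(S\times T)$, natural with respect to finite correspondences in $S$ and $T$; this is precisely the naturality you establish in your first paragraph, so with that rewording the argument is complete.
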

\begin{proof}
The proof can be found in Section 4.3 of \cite{L}.
\end{proof}
\begin{rmk} \label{important for ml}
Following the same proofs as Lemma \ref{Basic lemma I}, Lemma \ref{eta} and Corollary \ref{cycle algebra GL_2 identification}, we obtain the following properties of $\mf{E}^{*}_{a,b}$ for $E$ an elliptic curve without CM. 
\begin{enumerate}
\item[(a).]
The cohomologies of $\mf{E}^{*}_{a,b}$ are canonically isomorphic to the cohomologies of the following complex of $GL_2$ representations $$\bigoplus_{c+2d =a, c, d \geq 0}\Hom(Sym^c M_1(E)(d), \mb{Q}(a-b)[*]) \otimes Sym^{c}\V(d+b-a),$$
where we view the differentials as zero.
\item[(b).]
Via the identification of property(a), the map: 
$$\eta: \mf{E}^{*}_{a,b} \to \mf{E}^{*}_{a+2,b+1},$$
is compatible with the following map:
\begin{equation} \nonumber
\begin{split}
& (\Hom(Sym^c M_1(E)(d), \mb{Q}(a-b)[*]) \otimes Sym^{c}\V(d+b-a))  \otimes (\Hom(\mb{Q}(1), \mb{Q}(1)) \otimes \mb{Q})\\ 
\to & (\Hom(Sym^{c}M_1(E)(d+1), \mb{Q}(a-b+1)[*])  \otimes Sym^{c}\V(d+b-a).
\end{split}
\end{equation}
Moreover, the maps on cohomologies induced by $\eta$ are injective in the category of $GL_2$ representations.
\item[(c).]
Let $a$ be any non negative integer. In the derived category of $GL_{2}$-representations,  we have the following isomorphisms:
\begin{equation} \nonumber
\begin{split}
&H^*(\mf{E}^{*}_{a}) \cong 
 \bigoplus_{i \geq 0, a \equiv i(2)} \Hom(Sym^i M_1(E), \mb{Q}(\frac{a+i}{2})[*]) \otimes Sym^i \V(-\frac{a+i}{2}).
\end{split}
\end{equation}
\end{enumerate}
For $E$ an elliptic curve with CM, the results in Section \ref{sec-compcm} hold. In particular, we let $a$ be any non negative integer. In the derived category of $T_{\mb{K}}$-representations,  we have the following isomorphisms:
\begin{equation} \nonumber
\begin{split}
H^*(\ml{E}^{*}_{a}) \cong 
& \bigoplus_{i \geq 0, a\equiv i(2)} \Hom(M^i, \mb{K}(\frac{a+i}{2})[*]) \otimes V^i(-\frac{a+i}{2}) \\
& \oplus  \bigoplus_{j > 0, a\equiv j(2)} \Hom(\bar{M}^j, \mb{K}(\frac{a+j}{2})[*]) \otimes \bar{V}^i(-\frac{a+j}{2}).
\end{split}
\end{equation}
\end{rmk}
\

\section{DG modules and motives for an elliptic curve}
\label{DG modules and motives for an elliptic curve}
In this section, we want to connect the dg $\ml{E}^*_{ell}$-module with Voevodsky's geometric motives. Let us first explain the case of elliptic curves without CM in details.
\

Fix $r \in \mb{Z}_{\geq 0}$. Given $M \in \CM^{GL_2}_{\ml{E}_{ell}^{*}}$, we define its Adams graded $r$ summand as:
$$M(r) = Hom_{GL_2}(det^{\otimes -r}, \mf{E}_{ell}^{*} \otimes_{\ml{E}_{ell}^{*}}  M[2r]).$$
Here $[2r]$ means the shift of the complex. $Hom_{GL_2}(\cdot, \cdot)$ is the usual hom complex in $C(\R)$.
In fact, this defines a dg functor:
$$\ml{M}(r)^{dg}: \CM^{GL_2}_{\ml{E}_{ell}^{*}} \to C(\Shk)$$
and also an exact functor :
$$\ml{M}(r): \KCM^{GL_2}_{\ml{E}_{ell}^{*}} \to D(\Shk).$$
\begin{df}
Let $T^{tr}$ be the presheaf with transfers:
$$T^{tr} = coker(\mb{Q}_{tr}(Spec(k)) \xrightarrow{i_{\infty *}} \mb{Q}_{tr}(\mb{P}^1)),$$
where $i_{\infty}$ is the inclusion of $\infty$ into $\mb{P}^1$.
\end{df}
In fact, $T^{tr}$ is a Nisnevich sheaf with transfers.
\begin{lem}
We have a natural injectve map in $C(\Shk)$:    %%%%% need to think shift.
$$T^{tr} \to H^{0}(GL_2, \mf{E}_{ell}^{*} \otimes det[2]) .$$
\end{lem}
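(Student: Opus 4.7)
The plan is to build the map explicitly using the generator $c=\tfrac12(\Delta^+-\Delta^-)$ from Example~\ref{E21} combined with the canonical restriction of correspondences $T^{tr}\to z_{q.fin}(\mb{A}^1)$, and then to deduce injectivity from the faithfulness of external product against $c$. A map in $C(\Shk)$ from $T^{tr}$, viewed in cohomological degree $0$, to $H^0(GL_2,\mf{E}_{ell}^*\otimes \det[2])$ is just a sheaf morphism into $(\mf{E}_{ell}^2\otimes\det)^{GL_2}$ lying in $\ker d$. By Remark~\ref{important for ml}(c) the $\det^{-1}$-isotype of $\mf{E}_{ell}^*$ is concentrated in Adams weight $a=2$ at the $i=0$ summand, which is quasi-isomorphic in $\DM^{eff}(k,\mb{Q})$ to $\Hom(\mb{Q},\mb{Q}(1))\simeq \mb{Q}(1)$, so that $(\mf{E}_{ell}^*\otimes\det[2])^{GL_2}\simeq T^{tr}$ in $\DM^{eff}(k,\mb{Q})$. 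Since Lemma~\ref{eta} combined with Voevodsky's cancellation theorem \cite{V10} makes every transition $\eta$ an isomorphism on each isotype, the colimit defining $\mf{E}_2^*$ stabilises on the $\det^{-1}$-piece at its first term $\mf{E}^*_{2,0}$, so it is enough to land the map in $(\mf{E}^2_{2,0}\otimes\det)^{GL_2}$.

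For the construction on $X\in\Sm$, restriction of correspondences from $X\times\mb{P}^1$ to $X\times\mb{A}^1$ descends to a monomorphism $T^{tr}\hookrightarrow z_{q.fin}(\mb{A}^1)$ of sheaves, since only the image of $Cor(X,\mathrm{Spec}\,k)$ is killed by restriction. I then take the external product against $c\in \widetilde{C}^{Alt , -}_0(z_{q.fin}(\mb{A}^1))(E^2)$, $\Sigma_2$-symmetrise the two $\mb{A}^1$-factors of the resulting $\mb{A}^2$, and tensor with the generator of the trivial $GL_2$-isotype in $\V^{\otimes 2}(-2)\otimes\det$; the output lies in $(\mf{E}^2_{2,0}\otimes\det)^{GL_2}(X)$. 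The condition $d=0$ on the image is automatic because $(\mf{E}_{ell}^3\otimes\det)^{GL_2}=0$ by the same Adams-weight analysis.

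The main obstacle will be injectivity. It suffices to check the degree-$0$ sheaf map on every $X\in\Sm$; the factor $T^{tr}\hookrightarrow z_{q.fin}(\mb{A}^1)$ is already injective, so the remaining task is to see that external product with $c$, followed by the symmetric projections, annihilates no nonzero element coming from $T^{tr}(X)$. I plan to handle this by invoking Lemma~\ref{Comp of cycle algebra with motive} and the identification $\wedge^2 M_1(E)\simeq \mb{Q}(1)$, which in $\DM^{eff}(k,\mb{Q})$ realise multiplication by $c$ as the canonical identity on $\mb{Q}(1)$; injectivity at the sheaf level is then a direct cycle-theoretic verification, exploiting the fact that the two $\mb{A}^1$-factors in the $\mb{A}^2$ come from different sources (one from $E^2$, one from $X$) so the symmetrisation cannot produce new cancellation. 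Naturality in $X$ is automatic from the construction.
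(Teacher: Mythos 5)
Your construction is closely related to the paper's (your map is essentially the paper's map composed with one transition map $\eta$), but the injectivity argument has a genuine gap, and it occurs exactly at the point where your route is harder than it needs to be. The lemma asserts an injective map of complexes in $C(\Shk)$, so injectivity must be checked degreewise on sections, at the chain level. You reduce to the single stage $\mf{E}^{*}_{2,0}$ by claiming that the colimit defining $\mf{E}^{*}_{2}$ ``stabilises'' on the $\det^{-1}$-piece because, by Lemma \ref{eta} and cancellation, every transition map is an isomorphism on each isotype. But Lemma \ref{eta} (and Remark \ref{important for ml}) are statements about cohomology; at the chain level the maps $\eta$ are not isomorphisms and the colimit does not stabilise at any finite stage. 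Hence the part of injectivity you actually need --- that a nonzero section of $T^{tr}$, after external product with $c=\tfrac12(\Delta^{+}-\Delta^{-})$, symmetrisation and passage to the $\Sigma_2$-coinvariants against $\V^{\otimes 2}(-2)$, is neither killed at stage $(2,0)$ nor later in the colimit --- is not established. The tools you invoke for it (Lemma \ref{Comp of cycle algebra with motive} and $\wedge^{2}M_1(E)\cong\mb{Q}(1)$) live in $\DM$ and only control quasi-isomorphism classes, so they cannot yield degreewise injectivity of a specific sheaf map; ``a direct cycle-theoretic verification'' is announced but not carried out, and it is precisely the nontrivial point.

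The paper avoids the multiplication by $c$ altogether: the $\det^{-1}$-isotypical part of $\mf{E}^{*}_{ell}$ is $\varinjlim_{i\geq 0}\mf{E}^{*}_{2i,i-1}$, and its zeroth stage is $\mf{E}^{*}_{0,-1}\cong\widetilde{C}^{Alt,-}_{2-*}(z_{q.fin}(\mb{A}^{1}))\otimes\det^{-1}$, where no copy of $E$, no minus-projection and no symmetrisation intervene; after $\otimes\det[2]$ and taking $GL_2$-invariants its degree-zero term is just $z_{q.fin}(\mb{A}^{1})_{\mb{Q}}$, so the map is literally your restriction-of-correspondences monomorphism $T^{tr}\hookrightarrow z_{q.fin}(\mb{A}^{1})$ (and, as you correctly observe, the chain-map condition is automatic since the $\det^{-1}$-isotype vanishes in positive degrees after the shift). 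Whichever stage you start from, a complete argument still has to address injectivity of the canonical map from that stage into the colimit in degree zero, i.e.\ chain-level injectivity of $\eta$ on the $\det^{-1}$-isotypical piece; your cohomological citations do not cover this, and starting at stage $(2,0)$ additionally saddles you with proving chain-level injectivity of the product-with-$c$ map itself, which the paper's choice of the stage $(0,-1)$ renders unnecessary.
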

\begin{proof}
By the definition of $\mf{E}^{*}$, its $det^{-1}$ isotypical part is given by
$$\lim_{i \geq 0} \mf{E}^{*}_{2i,i-1}.$$ Notice that $$\mf{E}^{*}_{0, -1} \cong \widetilde{C}^{Alt , -}_{2-*}(z_{q.fin}(\mb{A}^{1})) \cong T^{tr}[-2].$$ 
So there is a natural injective map:
$$T^{tr} \to H^{0}(GL_2, \mf{E}_{ell}^{*} \otimes det[2]).$$
\end{proof}
For $M \in \CM^{GL_2}_{\ml{E}_{ell}^{*}}$, from the above lemma, we have the following composition of maps:
\begin{equation}
\begin{split}
&T^{tr} \ot \ml{M}(r)^{dg}(M) = T^{tr} \ot Hom_{GL_2}(det^{\otimes -r}, \mf{E}_{ell} \otimes_{\ml{E}_{ell}}  M[2r]) \\
\longrightarrow & Hom_{GL_2}(det^{-1}, \mf{E}_{ell}[2]) \ot Hom_{GL_2}(det^{\otimes -r}, \mf{E}_{ell} \otimes_{\ml{E}_{ell}}  M[2r]) \\
\longrightarrow & Hom_{GL_2}(det^{\otimes -r-1}, \mf{E}_{ell} \otimes \mf{E}_{ell} \otimes_{\ml{E}_{ell}}  M[2r+2]) \\
\longrightarrow & Hom_{GL_2}(det^{\otimes -r-1}, \mf{E}_{ell} \otimes_{\ml{E}_{ell}}  M[2r+2]) = \ml{M}(r+1)^{dg}(M).
\end{split}
\end{equation}
For the last arrow, we use the multiplicative structure of  $\ml{E}_{ell}$. Denote the composition of these maps by $\epsilon^{*}_{r}(M)$.
\

In order to construct a functor from the homotopy category of cell modules to the category of motives, we need to use Voevodsky's big category of motives $\DM(k, \mb{Q})$, which is defined by the symmetric spectra. Roughly speaking, one needs to define a model category $\Spt^{\Sym}_{T^{tr}}(k, \mb{Q})$ of symmetric $T^{tr}$ spectra in $C(\Shk)$ with ``a suitable model structure'', and then $\DM(k, \mb{Q})$ is defined to be the homotopy category of $\Spt^{\Sym}_{T^{tr}}(k, \mb{Q})$. For this approach, we refer to section 3.2, 3.3 and 3.4 in \cite{L}.
\

Then sending $M \in \CM^{GL_2}_{\ml{E}_{ell}^{*}}$ to the sequence:
$$\ml{M}^{dg}_{*}(M) = (\ml{M}^{dg}(0)(M), \ml{M}^{dg}(1)(M), \cdots)$$
with the bonding map $\epsilon^{*}_{r}(M)$ defines a dg functor:
$$\ml{M}^{dg}_{*}: \CM^{GL_2}_{\ml{E}_{ell}^{*}} \to \Spt^{\Sym}_{T^{tr}}(k, \mb{Q}),$$
and also an exact functor on their homotopy categories
$$\ml{M}_{*}: \KCM^{GL_2}_{\ml{E}_{ell}^{*}} \to \DM(k, \mb{Q}).$$
Here the $n$-th term in the spectrum is equipped with a trivial $\Sigma_n$-action.
\begin{lem}   \label{comp-Tate}
We have the following isomorphisms  in $\DM(k, \mb{Q})$:
\begin{enumerate}
\item
$\ml{M}(r)(\ml{E}_{ell})  \cong \mb{Q}(r)[2r].$
\item
Given $a, b \in \mb{Z}$, for any $r \in \mb{Z}$ such that $b+r \geq 0$, we have: 
$$\ml{M}(r)(\ml{E}_{ell} \otimes \V^{\otimes a}(b))  \cong M_1(E)^{\otimes a}(b+r)[2r].$$
\end{enumerate}
\end{lem}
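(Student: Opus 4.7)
The plan is to reduce both statements to the $\DM$-level cohomology computation of Remark \ref{important for ml}(c). Observe first that $\mf{E}_{ell}^* \otimes_{\ml{E}_{ell}^*} (\ml{E}_{ell} \otimes \V^{\otimes a}(b)) \cong \mf{E}_{ell}^* \otimes \V^{\otimes a}(b)$, so
\begin{equation*}
\ml{M}(r)(\ml{E}_{ell} \otimes \V^{\otimes a}(b)) = Hom_{GL_2}(det^{\otimes -r},\, \mf{E}_{ell}^* \otimes \V^{\otimes a}(b))[2r],
\end{equation*}
and the task reduces to identifying the $det^{\otimes -r}$-isotypical component of $\mf{E}_{ell}^* \otimes \V^{\otimes a}(b)$ as an object of $\DM(k, \mb{Q})$.

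For part (1), apply Remark \ref{important for ml}(c) directly to $\mf{E}_{ell}^* = \bigoplus_{a'} \mf{E}^*_{a'}$. The irreducible $det^{\otimes -r}$ appears in $H^*(\mf{E}^*_{a'})$ only when $i=0$ and $-\frac{a'+i}{2} = -r$, namely in $\mf{E}^*_{2r}$, and with $\DM$-coefficient $\Hom(\mb{Q}, \mb{Q}(r)) \cong \mb{Q}(r)$. Shifting by $[2r]$ gives $\mb{Q}(r)[2r]$.

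For part (2), assume without loss of generality that $a \geq 0$ (the case $a<0$ follows from the dualities $\V^\vee \cong \V(-1)$, $M_1(E)^\vee \cong M_1(E)(-1)$ together with Voevodsky's cancellation theorem). Decompose $\V^{\otimes a}$ into $GL_2$-isotypical pieces
\begin{equation*}
\V^{\otimes a} = \bigoplus_{c+2d=a,\, c,d \geq 0} C_{c,d} \cdot Sym^c\V \otimes det^d,
\end{equation*}
where $C_{c,d}$ is the multiplicity. Adjunction combined with $(Sym^c\V)^\vee \cong Sym^c\V \otimes det^{-c}$ gives
\begin{equation*}
Hom_{GL_2}(det^{-r},\, \mf{E}_{ell}^* \otimes Sym^c\V \otimes det^{d+b}) \cong Hom_{GL_2}(Sym^c\V \otimes det^{-r-d-b-c},\, \mf{E}_{ell}^*).
\end{equation*}
Remark \ref{important for ml}(c) applied with $i=c$ and $a' = 2r+2d+2b+c$ evaluates the right-hand side to $\Hom(Sym^c M_1(E), \mb{Q}(r+d+b+c)) \cong Sym^c M_1(E)(r+d+b)$ in $\DM(k, \mb{Q})$. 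Summing over $c,d$ with $c+2d=a$ and invoking Lemma \ref{Basic decom of elliptic motive} identifies the total with $M_1(E)^{\otimes a}(r+b)$, and the shift by $[2r]$ completes the argument. The CM case is parallel, replacing Remark \ref{important for ml}(c) by the analogous computation in Section \ref{sec-compcm} and using Remark \ref{Basic decom of elliptic motiveII} in place of Lemma \ref{Basic decom of elliptic motive}.

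The main obstacle is upgrading the cohomological identification of Remark \ref{important for ml}(c) to an honest identification of $\mf{E}_{ell}^*$ inside $\DM(k, \mb{Q})$ that is compatible with the $GL_2$-isotypical decomposition, so that the $Hom_{GL_2}(det^{\otimes -r}, -)$ functor can be applied isotypically and the colimit defining $\mf{E}^*_a$ commutes with taking isotypical components. This reduces to checking that the quasi-isomorphisms produced in the proofs of Lemmas \ref{Basic lemma I}, \ref{eta} and Corollary \ref{cycle algebra GL_2 identification} lift, via Remark \ref{com-trick}, to the level of complexes of $GL_2$-equivariant Nisnevich sheaves with transfers.
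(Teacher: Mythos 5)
Your proposal is correct and follows essentially the same route as the paper: both extract the $det^{\otimes -r}$-isotypical piece via $GL_2$-duality/adjunction, land in the Adams-graded component $\mf{E}^{*}_{2r+2b+a}$, and identify it through Remark \ref{important for ml}(c) with internal Homs $\Hom(Sym^c M_1(E), \mb{Q}(\cdot))$, reassembled into $M_1(E)^{\otimes a}(b+r)[2r]$ by the Schur-type decomposition of Lemma \ref{Basic decom of elliptic motive}. The only difference is organizational (you decompose $\V^{\otimes a}(b)$ before applying adjunction, the paper decomposes $\V^{\otimes -a}\otimes det^{\otimes -b-r}$ afterwards), which amounts to the same computation.
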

\begin{proof}
Because $\ml{M}(r)(\ml{E}_{ell}) = Hom_{GL_2}(det^{\otimes -r}, \mf{E}_{ell}[2r])$, the only non-trivial part is coming from weight $-2r$(or Adams degree $2r$) part in $\mf{E}_{ell}$. Using Remark \ref{important for ml}, we have the following quasi-isomorphism:
\begin{equation} \nonumber
\begin{split}
\mf{E}^{*}_{2r} \cong \bigoplus_{i \geq 0} \Hom(Sym^i M_1(E), \mb{Q}(\frac{2r+i}{2})) \otimes Sym^i \V(-\frac{2r+i}{2}).
\end{split}
\end{equation}
Then by definition of $\ml{M}(r)(\ml{E}_{ell}) $, we have:
\begin{equation} \nonumber
\begin{split}
& \ml{M}(r)(\ml{E}_{ell}) = Hom_{GL_2}(det^{\otimes -r}, \mf{E}_{ell}[2r]) \\
\cong & Hom_{GL_2}(det^{\otimes -r}, \bigoplus_{i \geq 0} \Hom(Sym^i M_1(E), \mb{Q}(\frac{2r+i}{2})) \\
& \otimes Sym^i \V(-\frac{2r+i}{2})[2r]) \\
\cong & Hom_{GL_2}(det^{\otimes -r}, \Hom(\mb{Q}, \mb{Q}(r)) \otimes det^{\otimes -r}[2r]) \\
\cong & \ml{H}om_{\DM_{gm}(k, \mb{Q})}(\mb{Q}, \mb{Q}(r))[2r] \cong \mb{Q}(r)[2r].
\end{split}
\end{equation}
For the second isomorphism,  we need to compute weight $-2r-1$ part in $\mf{E}_{ell}$. We have:
\begin{equation} \nonumber
\begin{split}
\mf{E}^{*}_{2r+1} \cong \bigoplus_{i \geq 0} \Hom(Sym^i M_1(E), \mb{Q}(\frac{2r+1+i}{2})) \otimes Sym^i \V(-\frac{2r+1+i}{2}).
\end{split}
\end{equation}
Therefore:
\begin{equation} \nonumber
\begin{split}
& \ml{M}(r)(\ml{E}_{ell} \otimes \V^{\otimes a}(b)) = Hom_{GL_2}(det^{\otimes -r}, \mf{E}_{ell} \otimes \V^{\otimes a}(b)[2r]) \\
\cong & Hom_{GL_2}(\V^{\otimes -a} \otimes det^{\otimes -b-r}, \mf{E}_{ell}[2r]) \\
\cong& Hom_{GL_2}(\V^{\otimes -a} \otimes det^{\otimes -b-r}, \mf{E}_{2b+2r+a}[2r])\\
\cong & Hom_{GL_2}(\V^{\otimes -a} \otimes det^{\otimes -b-r}, \bigoplus_{i \geq 0} \Hom(Sym^i M_1(E), \mb{Q}(\frac{2b+2r+a+i}{2})) \\
& \otimes Sym^i \V(-\frac{2b+2r+a+i}{2})[2r]) \\
\cong & Hom_{GL_2}(\bigoplus_{0 \leq j \leq a}C_j \otimes Sym^j\V(-\frac{2b+2r+a+j}{2}), \bigoplus_{i \geq 0} \Hom(Sym^i M_1(E), \\
&\mb{Q}(\frac{2b+2r+a+i}{2})) \otimes Sym^i \V(-\frac{2b+2r+a+i}{2})[2r]) \\
\cong & \ml{H}om_{\DM_{gm}(k, \mb{Q})}(\bigoplus_{0 \leq j \leq a} C_j \otimes Sym^j M_1(E), \mb{Q}(\frac{2b+2r+a+j}{2}))[2r]\\
\cong & M_{1}(E)^{\otimes a}(b+r)[2r].
\end{split}
\end{equation}
Here $C_j$ is the multiplicity of $Sym^j\V(-\frac{2b+2r+a+j}{2})$ in $\V^{\otimes -a} \otimes det^{\otimes -b-r}$.
\end{proof}

%%%%%%%%%%%%%%% fully faithful on generator

\begin{lem} \label{Vab generate}
$\{\ml{E}_{ell} \otimes \V^{a}(b) | a, b \in \mb{Z} \}$  generate $\ml{D}^{GL_2}_{\ml{E}_{ell}}$.
\end{lem}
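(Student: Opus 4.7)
The plan is to reduce the statement to the fact that the free $\ml{E}_{ell}$-modules of the form $\ml{E}_{ell}\otimes W$, for $W$ a finite-dimensional $GL_2$-representation, generate $\ml{D}^{GL_2}_{\ml{E}_{ell}}$, and then to show that every such free module lies in the triangulated subcategory generated by the $\ml{E}_{ell}\otimes\V^{\otimes a}(b)$. By the definition of cell modules in \cite{C} (cited at the start of the paper), every object of $\CM^{GL_2}_{\ml{E}_{ell}}$ is built by iterated cofiber sequences from free modules of the form $\ml{E}_{ell}\otimes W[n]$, with $W\in\mathbf{Rep}_{GL_2}$ and $n\in\mb{Z}$; equivalently, such objects generate $\ml{D}^{GL_2}_{\ml{E}_{ell}}$ as a (thick, suitably completed) triangulated subcategory. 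So it suffices to handle a single $\ml{E}_{ell}\otimes W$.

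Next I would use complete reducibility in $\mathbf{Rep}_{GL_2}$ over $\mb{Q}$: every finite-dimensional $GL_2$-representation $W$ splits as a finite direct sum of irreducibles of the form $Sym^a\V\otimes\det^{b}=Sym^a\V(b)$, with $a\in\mb{Z}_{\geq 0}$ and $b\in\mb{Z}$. Since tensoring with $\ml{E}_{ell}$ commutes with direct sums of representations, we are reduced to showing that each $\ml{E}_{ell}\otimes Sym^a\V(b)$ lies in the triangulated subcategory generated by $\{\ml{E}_{ell}\otimes\V^{\otimes a'}(b'):a',b'\in\mb{Z}\}$.

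For this the key point is that the Young symmetrizer (the idempotent $\frac{1}{a!}\sum_{\sigma\in\Sigma_a}\sigma\in\mb{Q}[\Sigma_a]$) realizes $Sym^a\V$ as a direct summand of $\V^{\otimes a}$; twisting by $\det^b$ exhibits $Sym^a\V(b)$ as a direct summand of $\V^{\otimes a}(b)$. Applying the exact functor $\ml{E}_{ell}\otimes(-):\mathbf{Rep}_{GL_2}\to\CM^{GL_2}_{\ml{E}_{ell}}$, we conclude that $\ml{E}_{ell}\otimes Sym^a\V(b)$ is a direct summand of the generator $\ml{E}_{ell}\otimes\V^{\otimes a}(b)$. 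The negative exponents case $a<0$ in the statement is covered by the canonical isomorphism $\V^{\vee}\cong\V(-1)$, which gives $\V^{\otimes a}(b)\cong\V^{\otimes-a}(b-a)$ and shows that allowing all $a\in\mb{Z}$ yields nothing new but also nothing less.

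The only non-formal point is closure under direct summands: to extract $\ml{E}_{ell}\otimes Sym^a\V(b)$ from $\ml{E}_{ell}\otimes\V^{\otimes a}(b)$, one needs the triangulated subcategory in question to be idempotent complete (or at least to admit the splitting of the symmetrizer idempotent). This is automatic in $\ml{D}^{GL_2}_{\ml{E}_{ell}}$ as defined in \cite{C} because the idempotent already lives in the underlying abelian category of representations and therefore in the dg category of $\ml{E}_{ell}$-modules, so the splitting exists on the module level before passing to the derived category; no Karoubi completion is required. I expect this verification—making sure the representation-theoretic splitting is compatible with the cell-module structure so that the summand is itself a cell module—to be the only step needing genuine care, and it follows directly from the definition of the tensor structure on $\CM^{GL_2}_{\ml{E}_{ell}}$ in \cite{C}.
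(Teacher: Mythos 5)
Your proposal is correct in substance but takes a genuinely different route from the paper. You argue by building everything up: cell approximation, the cell filtration by free modules $\ml{E}_{ell}\otimes W[n]$, complete reducibility of $\mathbf{Rep}_{GL_2}$, and the symmetrizer splitting exhibiting $\ml{E}_{ell}\otimes Sym^{a}\V(b)$ as a module-level direct summand of $\ml{E}_{ell}\otimes \V^{\otimes a}(b)$. The paper instead works directly with its definition of ``generates'' (the orthogonality condition of Definition \ref{classically generator}): if $Hom_{\ml{D}^{GL_2}_{\ml{E}_{ell}}}(\ml{E}_{ell}\otimes\V^{a}(b), M[i])=0$ for all $a,b,i$, one replaces $M$ by a cell module and uses Remark 6.5 of \cite{C} to identify this group with $H^{i}(Hom_{GL_2}(\V^{a}(b),M))$; since every irreducible $Sym^{c}\V(d)$ is a summand of some $\V^{\otimes a}(b)$, semisimplicity of $\mathbf{Rep}_{GL_2}$ forces $M$ to be acyclic, hence zero. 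Your argument proves the stronger assertion that the localizing subcategory generated by the $\ml{E}_{ell}\otimes\V^{\otimes a}(b)$ is all of $\ml{D}^{GL_2}_{\ml{E}_{ell}}$, and this does imply the lemma as stated (the full subcategory of objects $X$ with $Hom(X,M[n])=0$ for all $n$ is closed under shifts, triangles, coproducts and summands), though you should say so explicitly since the paper's ``generates'' is the orthogonality notion. Two points in your write-up need tightening: first, for infinite cell modules the ``iterated cofiber sequences'' involve countable coproducts and telescopes, so the subcategory must be taken localizing rather than merely thick --- your phrase ``suitably completed'' is hiding exactly this; second, your claim that no Karoubi completion is needed conflates the splitting of the idempotent in the ambient category (true, since it already splits at the module level) with closure of the generated subcategory under direct summands, which is what you actually invoke --- this is automatic for a localizing subcategory, and is simply irrelevant under the orthogonality definition, where passing to retracts costs nothing. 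The trade-off: the paper's proof is shorter and exactly calibrated to the definition in use, while your route records the finer structural fact that free modules on irreducible representations build the whole derived category --- which is essentially what Corollary \ref{classically generates} recovers afterwards anyway, via compactness and Neeman's theorem.
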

\begin{proof}
Let $M \in \ml{D}^{GL_2}_{\ml{E}_{ell}}$ be a dg module satisfying
$$Hom_{\ml{D}^{GL_2}_{\ml{E}_{ell}} }(\ml{E}_{ell} \otimes \V^{a}(b), M[i]) \cong 0$$
for any $\V^{a}(b) \in \R$ and $a, b, i \in \mb{Z}$.
Without loss of generality, we assume that $M$ is a cell module. Using Remark 6.5 in \cite{C}, we obtain that:
\[
Hom_{\ml{D}^{GL_2}_{\ml{E}_{ell}} }(\ml{E}_{ell} \otimes \V^{a}(b), M[i]) \cong H^i(Hom_{GL_2}(\V^{a}(b), M))\cong 0,
\] 
which implies that $M$ is quasi-isomorphic to $0$ as a complex of $GL_{2}$ representations. 
\end{proof}
%%%%%%%%%%%%% 
%%%%%%%%%%%%%%%%%%%%%%%
\begin{cor} \label{classically generates}
$\{\ml{E}_{ell} \otimes \V^{a}(b) | a, b\in \mb{Z} \}$  classically generates $(\ml{D}^{GL_2}_{\ml{E}_{ell}})^c$.
\end{cor}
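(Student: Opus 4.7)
The plan is to deduce Corollary \ref{classically generates} from Lemma \ref{Vab generate} by invoking the classical result (essentially due to Neeman and Ravenel) that a set of compact generators of a compactly generated triangulated category classically generates the subcategory of compact objects. Two verifications are required: first, that each $\ml{E}_{ell} \otimes \V^{a}(b)$ is compact in $\ml{D}^{GL_2}_{\ml{E}_{ell}}$; second, that Lemma \ref{Vab generate} supplies the generation hypothesis in the form required by Neeman's theorem.

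For compactness, I would re-use the adjunction identity
\[
\mathrm{Hom}_{\ml{D}^{GL_2}_{\ml{E}_{ell}}}(\ml{E}_{ell} \otimes \V^{a}(b),\, N[i]) \;\cong\; H^{i}\bigl(\mathrm{Hom}_{GL_2}(\V^{a}(b), N)\bigr),
\]
which was already invoked in the proof of Lemma \ref{Vab generate} via Remark~6.5 of \cite{C}. Since $\V^{a}(b)$ is a finite-dimensional (hence small) object of the category of $GL_2$-representations, the functor $\mathrm{Hom}_{GL_2}(\V^{a}(b), -)$ commutes with arbitrary coproducts; coproducts in the derived category $\ml{D}^{GL_2}_{\ml{E}_{ell}}$ are computed termwise on cell resolutions, so the displayed Hom commutes with coproducts in $N$. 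This gives compactness of $\ml{E}_{ell} \otimes \V^{a}(b)$.

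Lemma \ref{Vab generate} says precisely that an object $N \in \ml{D}^{GL_2}_{\ml{E}_{ell}}$ with $\mathrm{Hom}(\ml{E}_{ell} \otimes \V^{a}(b),\, N[i]) = 0$ for every $a, b, i \in \mb{Z}$ must be zero. This is exactly the Neeman generation hypothesis. Applying the theorem, the smallest thick (triangulated, idempotent-complete) subcategory of $\ml{D}^{GL_2}_{\ml{E}_{ell}}$ containing the set $\{\ml{E}_{ell} \otimes \V^{a}(b)\}_{a,b \in \mb{Z}}$ equals the subcategory of compact objects $(\ml{D}^{GL_2}_{\ml{E}_{ell}})^{c}$, which is the desired conclusion.

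The only step requiring care is the formal setup: one must confirm that $\ml{D}^{GL_2}_{\ml{E}_{ell}}$ is genuinely a compactly generated triangulated category in Neeman's sense, i.e., that it admits all small coproducts and that the generating set is indeed a set. Both properties follow from the cell module formalism of \cite{C}, but this is the place where the appropriate foundational input from that paper has to be cited explicitly. Beyond this bookkeeping, the argument is a direct application of the standard compact-generation machinery.
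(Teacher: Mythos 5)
Your proposal is correct and follows essentially the same route as the paper: compactness of each $\ml{E}_{ell} \otimes \V^{a}(b)$ via the adjunction of Remark~6.5 of \cite{C} together with the compactness (smallness) of $\V^{a}(b)$ over $GL_2$, then Lemma \ref{Vab generate} as the generation hypothesis, and finally Neeman's theorem \cite{Nee} to identify the thick closure of the generators with $(\ml{D}^{GL_2}_{\ml{E}_{ell}})^c$. The paper's proof carries out exactly these steps, so no further changes are needed.
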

\begin{proof}
First we want to show that $\ml{E}_{ell} \otimes \V^{a}(b)$ is a compact object in $\ml{D}^{GL_2}_{\ml{E}_{ell}}$ for any $a, b \in \mb{Z}$. Let $\{M_i \}_{i \in I}$ be a family of cell $A$-modules. By Remark 6.5 in \cite{C}, we have:
\begin{equation} \nonumber
\begin{split}
& Hom_{\ml{D}^{GL_2}_{\ml{E}_{ell}}}(\ml{E}_{ell} \otimes \V^{a}(b), \bigoplus_{i \in I}M_i) = Hom_{\ml{KCM}^{GL_2}_{\ml{E}_{ell}}}(\ml{E}_{ell} \otimes \V^{a}(b), \bigoplus_{i \in I}M_i) \\
\cong & Hom_{\ml{KCM}^{GL_2}_{\mb{Q}}}( \V^{a}(b), \bigoplus_{i \in I}M_i) \cong \bigoplus_{i \in I}Hom_{\ml{KCM}^{GL_2}_{\mb{Q}}}( \V^{a}(b), M_i) \\
\cong & \bigoplus_{i \in I}Hom_{\ml{KCM}^{GL_2}_{\ml{E}_{ell}}}(\ml{E}_{ell} \otimes \V^{a}(b), M_i),
\end{split}
\end{equation}
which implies that $\ml{E}_{ell} \otimes \V^{a}(b)$ is compact. Here we use that $\V^{a}(b)$ is a compact object in $\ml{D}^{GL_2}_{\mb{Q}}$.  
\

Together with Lemma \ref{Vab generate}, we know that $\ml{D}^{GL_2}_{\ml{E}_{ell}}$, as a compactly generated triangulated category, is generated by $\{\ml{E}_{ell} \otimes \V^{a}(b) | a, b\in \mb{Z} \}$. Then using a result of Neeman in \cite{Nee}, we know that $\{\ml{E}_{ell} \otimes \V^{a}(b) | a, b\in \mb{Z} \}$  classically generate $(\ml{D}^{GL_2}_{\ml{E}_{ell}})^c$.
\end{proof}
\begin{rmk}
Recall in Remark 5.9 in \cite{C}, we have:
$$(\KCM^{GL_2, f}_{\ml{E}_{ell}})^{\natural} \subset \ml{KFCM}^{GL_2}_{\ml{E}_{ell}} \subset (\ml{D}^{GL_2}_{\ml{E}_{ell}})^c.$$
Using Corollary \ref{classically generates}, we know that $(\KCM^{GL_2, f}_{\ml{E}_{ell}})^{\natural} \cong (\ml{D}^{GL_2}_{\ml{E}_{ell}})^c$. Therefore, we have:
$$(\KCM^{GL_2, f}_{\ml{E}_{ell}})^{\natural} \cong \ml{KFCM}^{GL_2}_A \cong (\ml{D}^{GL_2}_{\ml{E}_{ell}})^c.$$
\end{rmk}

%%%%%%%%%% lax tensor
\begin{lem}   \label{laxtensor}
The restriction of $\ml{M}$ to $\KCM^{GL_2}_{\ml{E}_{ell}}$ is a lax tensor functor.
\end{lem}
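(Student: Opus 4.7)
The plan is to construct, for all cell modules $M, N \in \CM^{GL_2}_{\ml{E}_{ell}^{*}}$, a natural map
\[
\mu_{M,N} : \ml{M}_{*}(M) \otimes \ml{M}_{*}(N) \longrightarrow \ml{M}_{*}(M \otimes_{\ml{E}_{ell}^{*}} N)
\]
in $\Spt^{\Sym}_{T^{tr}}(k, \mb{Q})$, together with a unit map $\mathbf{1} \to \ml{M}_{*}(\ml{E}_{ell}^{*})$, and to verify the coherence axioms of a lax monoidal functor. Since the relative tensor product $\otimes_{\ml{E}_{ell}^{*}}$ on $\CM^{GL_2}_{\ml{E}_{ell}^{*}}$ is the monoidal structure (it exists because $\ml{E}_{ell}^{*}$ is commutative, by Lemma \ref{mul}), and since $\Spt^{\Sym}_{T^{tr}}(k, \mb{Q})$ is symmetric monoidal, the structure maps will come from combining (i) the commutative multiplication on the sheaf-level algebra $\mf{E}_{ell}^{*}$ established in Proposition \ref{cdga object in DM}, with (ii) the natural pairing for internal Homs of $GL_2$-representations.

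First I would define $\mu_{M,N}$ levelwise. At bidegree $(r,s)$, set
\[
\begin{split}
\ml{M}(r)^{dg}(M) \otimes \ml{M}(s)^{dg}(N)
&= Hom_{GL_2}\bigl(det^{-r},\, \mf{E}_{ell}^{*} \otimes_{\ml{E}_{ell}^{*}} M[2r]\bigr) \\
&\quad \otimes Hom_{GL_2}\bigl(det^{-s},\, \mf{E}_{ell}^{*} \otimes_{\ml{E}_{ell}^{*}} N[2s]\bigr),
\end{split}
\]
and map it to $\ml{M}(r+s)^{dg}(M \otimes_{\ml{E}_{ell}^{*}} N)$ via the composition: first use the tensor pairing of $GL_2$-Hom complexes into
\[
Hom_{GL_2}\bigl(det^{-r-s},\, (\mf{E}_{ell}^{*} \otimes_{\ml{E}_{ell}^{*}} M) \otimes (\mf{E}_{ell}^{*} \otimes_{\ml{E}_{ell}^{*}} N)[2r+2s]\bigr);
\]
then apply the multiplication $\mf{E}_{ell}^{*} \otimes \mf{E}_{ell}^{*} \to \mf{E}_{ell}^{*}$ combined with the canonical isomorphism
\[
(\mf{E}_{ell}^{*} \otimes_{\ml{E}_{ell}^{*}} M) \otimes_{\mf{E}_{ell}^{*}} (\mf{E}_{ell}^{*} \otimes_{\ml{E}_{ell}^{*}} N) \cong \mf{E}_{ell}^{*} \otimes_{\ml{E}_{ell}^{*}} (M \otimes_{\ml{E}_{ell}^{*}} N),
\]
to land in $\ml{M}(r+s)^{dg}(M \otimes_{\ml{E}_{ell}^{*}} N)$. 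The unit map $\mb{Q} \to \ml{M}(0)^{dg}(\ml{E}_{ell}^{*}) = Hom_{GL_2}(\mb{Q}, \mf{E}_{ell}^{*})$ is induced by the unit $\mb{Q} \to \mf{E}_{ell}^{*}$ of the cdga structure.

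Next I would verify the required compatibilities. The key step is to check that the collection $\{\mu_{M,N}\}$ assembles into a morphism of symmetric $T^{tr}$-spectra, that is, that it intertwines the bonding maps $\epsilon_{r}^{*}$. This reduces to the statement that the diagram formed by the two ways of multiplying by a class in $T^{tr} \subset H^0(GL_2, \mf{E}_{ell}^{*} \otimes det[2])$ and by the multiplication of $\mf{E}_{ell}^{*}$ commutes up to the symmetric group action on the target spectrum; this in turn follows from the associativity and graded commutativity of $\mf{E}_{ell}^{*}$ established already on the cycle level (Lemma \ref{mul}, lifted to $\mf{E}_{ell}^{*}$ in Proposition \ref{cdga object in DM}). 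The associativity of $\mu_{M,N}$ and the two-sided unit condition follow from the corresponding axioms for $\mf{E}_{ell}^{*}$ as a commutative monoid. Naturality in $M$ and $N$ is immediate from functoriality of $\otimes_{\ml{E}_{ell}^{*}}$ and $Hom_{GL_2}$.

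The main technical obstacle is the spectrum-level bookkeeping: one must check that permuting tensor factors in $\mf{E}_{ell}^{*} \otimes \mf{E}_{ell}^{*}$ corresponds correctly to the symmetric group actions on $\ml{M}_{*}(M) \otimes \ml{M}_{*}(N)$ and $\ml{M}_{*}(M \otimes_{\ml{E}_{ell}^{*}} N)$, and that the sign conventions coming from the shifts $[2r]$, $[2s]$ match those built into the bonding maps $\epsilon_{r}^{*}$. This is the same kind of verification carried out for Tate motives in Section 4 of \cite{L}, so I would organize the argument by reducing to that template: fix trivializations of $det$-isotypical pieces as in Lemma \ref{comp-Tate}, write $\mu_{M,N}$ in terms of the $\epsilon_{r}^{*}$, and appeal to graded commutativity of $\mf{E}_{ell}^{*}$ to handle the symmetry.
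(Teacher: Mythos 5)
Your proposal is correct and follows essentially the same route as the paper: both define the structure map by combining the cdga multiplication on $\mf{E}_{ell}^{*}$ (Proposition \ref{cdga object in DM}) with the pairing on Adams graded summands $Hom_{GL_2}(det^{\otimes -r}, \mf{E}_{ell}^{*} \otimes_{\ml{E}_{ell}^{*}} M[2r])$, check compatibility with the bonding maps $\epsilon^{*}_{r}$, and pass to homotopy categories. Your write-up simply makes explicit the levelwise Hom-complex pairing, the unit, and the symmetric-spectrum bookkeeping that the paper leaves implicit.
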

\begin{proof}
Given $M, N \in \KCM^{GL_2}_{\ml{E}_{ell}}$, we have the following maps:
\begin{equation} \nonumber
\begin{split}
&(\mf{E}_{ell} \otimes_{\ml{E}_{ell}} M) \otimes^{tr} (\mf{E}_{ell} \otimes_{\ml{E}_{ell}} N) 
\longrightarrow  (\mf{E}_{ell} \otimes^{tr} \mf{E}_{ell}) \otimes_{\ml{E}_{ell}} (M \otimes_{\ml{E}_{ell}} N) \\
& \longrightarrow \mf{E}_{ell} \otimes_{\ml{E}_{ell}} (M \otimes_{\ml{E}_{ell}} N),
\end{split}
\end{equation}
where the last map is obtained by using the multiplicative structure of $\mf{E}_{ell}$ as a cdga over $GL_2$ in $\DM(k, \mb{Q})$ (Proposition \ref{cdga object in DM}). On the corresponding Adams graded summand, this induces:
$$(\mf{E}_{ell} \otimes_{\ml{E}_{ell}} M)(r) \otimes^{tr} (\mf{E}_{ell} \otimes_{\ml{E}_{ell}} N)(s) \longrightarrow (\mf{E}_{ell} \otimes_{\ml{E}_{ell}} (M \otimes_{\ml{E}_{ell}} N))(r+s).$$
And these maps are compatible with bonding maps, giving us the natural transformation:
\begin{equation}    \label{str-map} \nonumber
\rho_{M, N}: \ml{M}^{dg}(M) \otimes \ml{M}^{dg}(N) \to \ml{M}^{dg}(M \otimes N)
\end{equation}
in $\Spt^{\Sym}_{T^{tr}}(k, \mb{Q})$. Passing to homotopy categories, we obtain that  $\ml{M}$ is a lax tensor functor. 
\end{proof}

%%%%%%%%%%%%%% tensor functor
\begin{lem}   \label{tensor-fun} 
The restriction of $\ml{M}$ to $(\KCM^{GL_2, f}_{\ml{E}})^{\natural}$ is a tensor functor.
\end{lem}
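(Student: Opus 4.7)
The plan is to leverage Lemma \ref{laxtensor} together with Corollary \ref{classically generates}: Lemma \ref{laxtensor} already supplies the natural transformation $\rho_{M,N}\colon \ml{M}(M)\otimes\ml{M}(N)\to\ml{M}(M\otimes_{\ml{E}_{ell}} N)$, so the task reduces to showing that $\rho_{M,N}$ is an isomorphism in $\DM(k,\mb{Q})$ whenever $M,N\in(\KCM^{GL_2,f}_{\ml{E}_{ell}})^{\natural}$. Since $\ml{M}$ is a triangulated functor in each variable and since a morphism in $\DM(k,\mb{Q})$ is an isomorphism if and only if it is so after passing to direct summands, shifts, and cones, the class of pairs $(M,N)$ for which $\rho_{M,N}$ is an isomorphism is closed under triangles, shifts and direct summands in each variable.

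By Corollary \ref{classically generates}, $(\ml{D}^{GL_2}_{\ml{E}_{ell}})^c=(\KCM^{GL_2,f}_{\ml{E}_{ell}})^{\natural}$ is classically generated by the free dg modules $\ml{E}_{ell}\otimes\V^{\otimes a}(b)$, $a,b\in\mb{Z}$. It therefore suffices to verify that $\rho_{M,N}$ is an isomorphism for $M=\ml{E}_{ell}\otimes\V^{\otimes a}(b)$ and $N=\ml{E}_{ell}\otimes\V^{\otimes c}(d)$. In this case one has the obvious identification $M\otimes_{\ml{E}_{ell}} N\cong \ml{E}_{ell}\otimes\V^{\otimes(a+c)}(b+d)$, and by Lemma \ref{comp-Tate}(2) each of the three objects
\[
\ml{M}(M),\qquad \ml{M}(N),\qquad \ml{M}(M\otimes_{\ml{E}_{ell}} N)
\]
is given (up to the appropriate Tate twists coming from the Adams grading) by tensor powers of $M_1(E)$ in $\DM_{gm}(k,\mb{Q})$, namely $M_1(E)^{\otimes a}(b)$, $M_1(E)^{\otimes c}(d)$, and $M_1(E)^{\otimes(a+c)}(b+d)$ respectively.

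The remaining step, which is the main obstacle, is to check that under these identifications the structure map $\rho_{M,N}$ recovers the standard tensor product in $\DM(k,\mb{Q})$. Unwinding the definition of $\rho$, it is built from the multiplication $\mf{E}_{ell}\otimes\mf{E}_{ell}\to\mf{E}_{ell}$, which by construction in Section \ref{cycle alg} (and Remark \ref{com-trick}) arises from the external product of cycles, combined with the $GL_2$-equivariant pairing $\V^{\otimes a}\otimes\V^{\otimes c}\to\V^{\otimes(a+c)}$. Under the quasi-isomorphism of Remark \ref{important for ml} and Lemma \ref{comp-Tate} these pairings are exactly the ones inducing the canonical product on Hom-groups from Lemma \ref{Comp of cycle algebra with motive}, which is the external/tensor product in $\DM_{gm}(k,\mb{Q})$. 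Hence $\rho_{\ml{E}_{ell}\otimes\V^{\otimes a}(b),\,\ml{E}_{ell}\otimes\V^{\otimes c}(d)}$ coincides with the standard isomorphism $M_1(E)^{\otimes a}(b)\otimes M_1(E)^{\otimes c}(d)\xrightarrow{\cong} M_1(E)^{\otimes(a+c)}(b+d)$ and is in particular an isomorphism. The CM case is entirely analogous, replacing $GL_2$-representations with $T_{\mb{K}}$-representations, $\V$ with $V,\bar{V}$, and invoking the CM versions Lemma \ref{Basic lemma III}, Lemma \ref{fully-faithful cm} and Corollary \ref{cycle algebra T_K identification}.
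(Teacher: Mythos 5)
Your proposal is correct and follows essentially the same route as the paper: reduce via Lemma \ref{laxtensor} to showing $\rho_{M,N}$ is an isomorphism, reduce to the generators $\ml{E}_{ell}\otimes\V^{\otimes a}(b)$, and identify $\rho$ there with the standard isomorphism $M_1(E)^{\otimes a}(b)\otimes M_1(E)^{\otimes c}(d)\cong M_1(E)^{\otimes (a+c)}(b+d)$ using Lemma \ref{comp-Tate} and the compatibility of products. The only cosmetic difference is that the paper reduces by induction on the weight filtration to generalized sphere modules and treats their idempotents explicitly via Lemma \ref{fully-faithful}, whereas you absorb summands into a thick-subcategory argument based on Corollary \ref{classically generates}; the substance is the same.
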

\begin{proof}
By lemma \ref{laxtensor}, we only need to show that $\rho_{M, N}$ is an isomorphism in the homotopy category. Using induction on the length of the weight filtration, it's enough to show that this is an isomorphism when we take $M$ and $N$ two generalized sphere $\ml{E}_{ell}$ modules. Notice that any generalized sphere module can be realized as some idempotent of the dg module of the form $\ml{E}_{ell} \otimes \V^{a}(b)$ for some $a, b \in \mb{Z}$. We assume that $M = p(\ml{E}_{ell} \otimes \V^{a}(b))$ and $N = q(\ml{E}_{ell} \otimes \V^{c}(d))$, where $p,q$ are idempotents in the respective endo-groups. Applying Lemma \ref{fully-faithful}, we obtain that the idempotents of $\ml{E}_{ell} \otimes \V^{a}(b)$ is one-to-one corresponding to the idempotents of $M_{1}(E)^{\otimes a}(b)$, i.e., $$\ml{M}(M) = \ml{M}(p(\ml{E}_{ell} \otimes \V^{a}(b))) = \ml{M}(p)( M_{1}(E)^{\otimes a}(b)),$$
where $\ml{M}(p)$ is the image of $p$ under $\ml{M}$ in the idempotent endomorphism of $M_1(E)^{\otimes a}(b)$. Then $\rho_{M, N}$ can be identify as the morphism:
$$\ml{M}(p)(M_1(E)^{\otimes a}(b)) \otimes^{tr} \ml{M}(q)(M_1(E)^{\otimes c}(d)) \to \ml{M}(p \otimes q)(M_1(E)^{\otimes a+c}(b+d)),$$
which is an isomorphism in $\DM_{gm}(k, \mb{Q})$.
\end{proof}
%%%%%%%%% 
Before proving next lemma, we recall definitions about different kinds of generators of a triangulated category. See \cite{BB} or \cite{Rou} for example. Every subcategory $\ml{U}$ of a triangulated category $\ml{T}$ we consider is strict, which means that, each object of $ \ml{T}$, which is isomorphic to an object of $\ml{U}$, is an object of $\ml{U}$.
\begin{df} \label{generate}
Given $\mf{S}$ a set of objects in a triangulated category $\ml{T}$, then we denote $\langle \mf{S} \rangle$ to be the smallest strict full subcategory containing $\mf{S}$ and closed under finite direct sums, direct summands and shifts.
\end{df}
\begin{df} \label{operation}
Give $\ml{A}, \ml{B}$ two subcategories of a triangulated category $\ml{T}$. We define:
\begin{itemize}
\item
$\ml{A} \star \ml{B}$ is the full subcategory of $\ml{T}$ consisting of objects $X$ which can be fit into a triangle $$A \to X \to B \to A[1],$$
where $A \in \ml{A}$ and $B \in \ml{B}$.
\item
$\ml{A} \diamond \ml{B} = \langle \ml{A} \star \ml{B} \rangle$.
\item
$\langle \ml{A} \rangle_0 = 0$ and $\langle \ml{A} \rangle_n = \langle \ml{A} \rangle_{n-1} \diamond  \langle \ml{A} \rangle$ inductively.  
\item
Set $\langle \ml{A} \rangle_{\infty} = \bigcup_{n \geq 0}\langle \ml{A} \rangle_n.$
\end{itemize}
\end{df}
\begin{df} \label{classically generator}
Let $\mf{S}$ be a set of objects in a triangulated category $\ml{T}$. Then
\begin{itemize}
\item
 $\mf{S}$ classically generates $\ml{T}$ if the smallest thick (i.e. closed under isomorphisms and direct summands) subcategory of $\ml{T}$ containing $\mf{S}$ is $\ml{T}$ itself. Equivalently, $\ml{T} = \langle \mf{S} \rangle_{\infty}$.
 \item
 $\mf{S}$ generates $\ml{T}$ if, given an object $A \in \ml{T}$ such that   $$Hom_{\ml{T}}(S, A[n]) = 0$$
 for all $S \in \mf{S}$ and any $n \in \mb{Z}$, implies that $A = 0$.
\end{itemize}
\end{df}

\begin{lem} \label{cri for equivalence between  tri-cat}
Give $\ml{T}_{1}, \ml{T}_2$ two triangulated categories and $\phi: \ml{T}_{1} \to  \ml{T}_2$ a triangulated functor. Let $\mf{S}$ be a set of objects in $\ml{T}_{1}$, which classically generates $\ml{T}_{1}$ and is closed under shifts.
 Assume:
\begin{itemize}
\item[1.]
The set of the images of $\mf{S}$ under $\phi$ classically generates $\ml{T}_2$;
\item[2.]
$\phi$ restricted to $\mf{S}$, which is viewed as a full subcategory of $\ml{T}_{1}$, is fully faithful.
\end{itemize}
Then $\phi$ induces an equivalence between $\ml{T}_{1}$ and $\ml{T}_2$. 
\end{lem}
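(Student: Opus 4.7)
The plan is to show separately that $\phi$ is fully faithful and essentially surjective, then combine. Both pieces reduce to dévissage along the operations $\langle\cdot\rangle$ and $\diamond$ that define classical generation, so the argument is structural rather than computational.

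For full faithfulness I would proceed in two steps. Fix $S \in \mf{S}$ and let $\ml{F}_S \subseteq \ml{T}_1$ be the full subcategory of objects $X$ for which $\phi$ induces an isomorphism $Hom_{\ml{T}_1}(X, S[n]) \to Hom_{\ml{T}_2}(\phi X, \phi S[n])$ for every $n \in \mb{Z}$. Since $\mf{S}$ is closed under shifts, hypothesis (2) gives $\mf{S} \subseteq \ml{F}_S$. Basic additivity shows $\ml{F}_S$ is closed under shifts, finite direct sums and direct summands, and the five-lemma applied to the long exact $Hom(-, S[n])$-sequences coming from a distinguished triangle shows closure under cones. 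Hence $\ml{F}_S$ is closed under the operations $\star$ and $\diamond$, so $\ml{F}_S \supseteq \langle \mf{S} \rangle_\infty = \ml{T}_1$. A symmetric dévissage, fixing $X \in \ml{T}_1$ and letting $\ml{G}_X$ be the full subcategory of $Y$ such that $\phi: Hom_{\ml{T}_1}(X, Y[n]) \to Hom_{\ml{T}_2}(\phi X, \phi Y[n])$ is an isomorphism for every $n$, yields $\ml{G}_X = \ml{T}_1$. Together these give full faithfulness of $\phi$ on all of $\ml{T}_1$.

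For essential surjectivity, consider the essential image $\ml{E} \subseteq \ml{T}_2$ of $\phi$. It is closed under shifts and finite direct sums since $\phi$ is additive and triangulated, and closed under cones because any morphism $\phi X \to \phi Y$ lifts, by full faithfulness, to $X \to Y$ in $\ml{T}_1$, whose cone $\phi$ sends to a cone of the original map in $\ml{T}_2$. The crucial point is closure under direct summands: $\ml{T}_1 = \langle \mf{S} \rangle_\infty$ is idempotent complete by the very construction of $\langle \cdot \rangle$, so any idempotent $e$ on $\phi X$ lifts uniquely through full faithfulness to an idempotent $\tilde e : X \to X$, which splits in $\ml{T}_1$ as $X \cong Y \oplus Y'$; applying $\phi$ splits $e$ inside $\ml{E}$. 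Thus $\ml{E}$ is a thick subcategory of $\ml{T}_2$ containing $\phi(\mf{S})$, and hypothesis (1) forces $\ml{E} = \ml{T}_2$. Combining full faithfulness with essential surjectivity gives the desired equivalence.

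The main obstacle, as I see it, is precisely this closure of the essential image under direct summands: it is the one place where one must lift a piece of structure from $\ml{T}_2$ back to $\ml{T}_1$, and it relies on the fact that "classically generates" builds $\ml{T}_1$ using an operation which already closes under direct summands. All other closure properties for $\ml{F}_S$, $\ml{G}_X$, and $\ml{E}$ follow from standard additivity and five-lemma considerations.
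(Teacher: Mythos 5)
Your route differs from the paper's. The paper proves the equivalence by induction on the generation filtration, showing level by level that $\phi$ induces an equivalence $\langle \mf{S}\rangle_n \to \langle \phi(\mf{S})\rangle_n$, with the five lemma driving both full faithfulness and essential surjectivity at each stage. You instead prove full faithfulness globally by a two-variable d\'evissage (the subcategories $\ml{F}_S$ and $\ml{G}_X$) and then prove essential surjectivity by showing the essential image is a thick subcategory containing $\phi(\mf{S})$. Your full faithfulness argument is complete and, to my mind, cleaner than the paper's level-by-level bookkeeping: the closure of $\ml{F}_S$ and $\ml{G}_X$ under shifts, finite sums, summands and cones is exactly as you say, and $\langle\mf{S}\rangle_\infty=\ml{T}_1$ finishes it.

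However, your essential surjectivity step has a genuine gap, precisely at the point you yourself flag as crucial. The claim that $\ml{T}_1=\langle\mf{S}\rangle_\infty$ is idempotent complete ``by the very construction of $\langle\cdot\rangle$'' is false: the operation $\langle\cdot\rangle$ closes a subcategory under those direct summands that already exist in the ambient category $\ml{T}_1$; it does not produce splittings of idempotents, so nothing in the definition of classical generation forces an idempotent $\tilde e\colon X\to X$ to split in $\ml{T}_1$. In fact the lemma as stated requires some such splitting: taking $\phi$ to be the embedding of a non-idempotent-complete triangulated category into its idempotent completion, with $\mf{S}$ the collection of all objects of $\ml{T}_1$, satisfies both hypotheses but is not essentially surjective. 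So your argument (and the statement) needs the additional input that the relevant idempotents split in $\ml{T}_1$ -- for instance that $\ml{T}_1$ is idempotent complete. To be fair, the paper's own proof silently crosses the same bridge (at the $n=1$ step it asserts that direct summands of $\phi(A)$ correspond one-to-one to direct summands of $A$, which again presupposes that the lifted idempotents split), and in the intended application the issue is harmless: $(\ml{D}^{GL_2}_{\ml{E}_{ell}})^c$ consists of compact objects of a compactly generated category and is therefore idempotent complete, and $\DMEM(k,\mb{Q})_E$ is idempotent complete by definition. But as a justification of the lemma in the generality stated, the sentence quoted above is the step that would fail, and you should either add idempotent completeness of $\ml{T}_1$ as a hypothesis or verify the splitting by hand in the case at issue.
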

\begin{proof}
We denote the image of $\mf{S}$ by $\phi(\mf{S})$. It's enough to show that:
\

$\phi$ induces an equivalence between $\langle \mf{S} \rangle_n$ and  $\langle \phi(\mf{S})\rangle_n$ for any $n \in \mb{Z}_{\geq 0}$.
\

The case $n = 0$ is obvious. 
\

Assume $n =1$. Every object in $\langle \phi(\mf{S})\rangle$ is finite direct sums, direct summands and shifts of some objects in $\phi(\mf{S})$. Since $\phi$ is a triangulated functor, it commutes with shifts and direct sums. Because $\phi$ is fully faithful restricting on $\mf{S}$, the direct summands of an object $\phi(A)$ in $\phi(\mf{S})$ is one-to-one corresponding to the direct summands of $A \in \mf{S}$. This implies that:
$$\phi:\langle \mf{S} \rangle_1 \to \langle \phi(\mf{S})\rangle_1$$
is essential surjective. Furthermore $\phi$ is clearly fully faithful, which implies that $\phi$ is an equivalence.
\

Assume $\phi$ induces an equivalence between $\langle \mf{S} \rangle_n$ and  $\langle \phi(\mf{S})\rangle_n$. Let us prove the case $n+1$.
\

Take an element $B_{n+1}$ in $\langle \phi(\mf{S})\rangle_{n} \star \langle \phi(\mf{S})\rangle_1$, which implies that there exists a distinguished triangle:
$$B_n \to B_{n+1} \to B_1 \to B_n[1],$$
where $B_i \in \langle \phi(\mf{S})\rangle_{n}$. By induction, we know that: there exist $A_1 \in \langle \mf{S} \rangle_1$ and $A_n \in \langle \mf{S} \rangle_n$ such that:
$B_n = \phi(A_n), B_{1} = \phi(A_1)$.
\

Therefore, we have $A_{n+1} \in \langle \mf{S} \rangle_{n+1}$, such that:
$$A_n \to A_{n+1} \to A_1 \to A_n[1]$$
is a distinguished triangle in $\ml{T}_1$. Applying $\phi$ to this triangle, we get an isomorphism $\phi(A_{n+1}) \cong B_{n+1}$. After a suitable choice of the isomorphism class of $A_{n+1}$, we can find a preimage of $B_{n+1}$. 
\

In other words, we have shown that:
$$\phi: \langle \mf{S}\rangle_{n} \star \langle\mf{S}\rangle_1 \to \langle \phi(\mf{S})\rangle_{n} \star \langle \phi(\mf{S})\rangle_1$$
is essentially surjective.
\

Next, let us check that the above functor is fully faithful. Given $A, \tilde{A} \in \langle \mf{S}\rangle_{n} \star \langle\mf{S}\rangle_1$, then we can assume that there exist two distinguished triangles:
\begin{equation} \label{tri1}
A_n \to A \to A_1 \to A_n[1]
\end{equation}
and
\begin{equation} \label{tri2}
\tilde{A}_n \to \tilde{A} \to \tilde{A}_1 \to \tilde{A}_n[1].
\end{equation}
Then  applying $Hom(A_n, \cdot)$ to the triangle (\ref{tri2}), we get a long exact sequence:
$$Hom(A_n, \tilde{A}_n) \to Hom(A_n, \tilde{A}) \to Hom(A_n, \tilde{A}_1) \to Hom(A_n, \tilde{A}_n[1]) \to \cdots$$
After compared to the image of the above long exact sequence under $\phi$, and by induction on $n$ and the five lemma, we get that:
$$Hom(A_n, \tilde{A}[*]) \cong Hom(\phi(A_n), \phi( \tilde{A})[*]).$$
Similarly, we have 
$Hom(A_1, \tilde{A}[*]) \cong Hom(\phi(A_1), \phi( \tilde{A})[*]).$
\

Next applying $Hom( \cdot, \tilde{A})$ to the triangle (\ref{tri2}), we get another long exact sequence:
$$Hom(A_n[1], \tilde{A}) \to Hom(A_1, \tilde{A}) \to Hom(A, \tilde{A}) \to Hom(A_n, \tilde{A}) \to \cdots.$$
Compared to its image under $\phi$ and isomorphisms above, we get:
$$Hom(A, \tilde{A}[*]) \cong Hom(\phi(A), \phi( \tilde{A})[*]).$$
Now, we have shown that:
$$\phi: \langle \mf{S}\rangle_{n} \star \langle\mf{S}\rangle_1 \to \langle \phi(\mf{S})\rangle_{n} \star \langle \phi(\mf{S})\rangle_1$$
is an equivalence. 
\

Recall that $\phi$ commutes with shifts and finite direct sums, and maps the idempotent in $End(A)$ to the idempotent in $End(\phi(A))$ for any $A \in \langle \mf{S}\rangle_{n} \star \langle\mf{S}\rangle_1$. This implies that:
$$\phi: \langle \mf{S}\rangle_{n} \diamond \langle\mf{S}\rangle_1 \to \langle \phi(\mf{S})\rangle_{n} \diamond \langle \phi(\mf{S})\rangle_1$$
is an equivalence.
\end{proof}
\begin{thm} \label{equi}
Given $E$ an elliptic curve without CM, then there is an exact functor
$$\ml{M}: \ml{D}^{GL_2}_{\ml{E}_{ell}} \to \DM(k, \mb{Q}),$$
which is a lax tensor functor. Furthermore, the restriction of $\ml{M}$ to
$$\ml{M}^c: (\ml{D}^{GL_2}_{\ml{E}_{ell}})^c \to \DM(k, \mb{Q})$$
defines an equivalence of $(\ml{D}^{GL_2}_{\ml{E}_{ell}})^c$ with $\DMEM(k, \mb{Q})_E$ as triangulated tensor categories, where $(\ml{D}^{GL_2}_{\ml{E}_{ell}})^c$ is the full subcategory of $\ml{D}^{GL_2}_{\ml{E}_{ell}}$ consisting of compact objects.
\end{thm}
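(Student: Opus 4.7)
The construction of the functor $\ml{M}$ and the verification that it is exact and lax tensor are already in hand: the functor $\ml{M}_*^{dg}$ was defined before the statement via the Adams weight pieces $\ml{M}(r)^{dg}(M) = Hom_{GL_2}(det^{\otimes -r}, \mf{E}_{ell}^* \otimes_{\ml{E}_{ell}^*} M[2r])$ with bonding maps $\epsilon^*_r(M)$ coming from the cdga structure on $\mf{E}_{ell}^*$, and Lemma~\ref{laxtensor} established the lax monoidal structure. So the work is to prove the equivalence claim for compact objects.

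The plan is to apply the criterion of Lemma~\ref{cri for equivalence between tri-cat} to $\ml{M}^c$, taking as the set of classical generators $\mf{S} = \{\ml{E}_{ell} \otimes \V^{\otimes a}(b)[n] \mid a,b,n \in \mb{Z}\}$. By Corollary~\ref{classically generates}, $\mf{S}$ classically generates $(\ml{D}^{GL_2}_{\ml{E}_{ell}})^c$, so hypothesis (1) reduces to checking that its image classically generates $\DMEM(k,\mb{Q})_E$, and hypothesis (2) to checking full faithfulness on $\mf{S}$ viewed as a full subcategory. First I would verify that $\ml{M}^c$ actually lands in $\DMEM(k, \mb{Q})_E$: by Lemma~\ref{comp-Tate}(2) applied for $r$ large enough so that $b + r \geq 0$, one has $\ml{M}(r)(\ml{E}_{ell}\otimes \V^{\otimes a}(b)) \cong M_1(E)^{\otimes a}(b+r)[2r]$ in $\DM(k,\mb{Q})$, hence $\ml{M}(\ml{E}_{ell}\otimes \V^{\otimes a}(b))$ becomes, after suitable Tate twist in the symmetric $T^{tr}$-spectrum picture, isomorphic to $M_1(E)^{\otimes a}(b)$; this object lies in $\DMEM(k,\mb{Q})_E$ by Definition~\ref{DMEM_E}. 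Since $\DMEM(k,\mb{Q})_E$ is the smallest idempotent complete rigid tensor triangulated subcategory of $\DM_{gm}(k,\mb{Q})$ containing $M(E) = \mb{Q} \oplus M_1(E)[1] \oplus \mb{Q}(1)[2]$, and since the decomposition in Lemma~\ref{Basic decom of elliptic motive} shows that every $M_1(E)^{\otimes a}(b)$ is a direct summand of a tensor power of $M(E)$ (up to Tate twist), conversely every object of $\DMEM(k,\mb{Q})_E$ lies in the thick subcategory generated by $\{M_1(E)^{\otimes a}(b)\}$, giving the classical generation in (1).

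For hypothesis (2), the key input is Lemma~\ref{fully-faithful}. The computation displayed just before that lemma identifies
\[
Hom_{\ml{E}_{ell}}(\ml{E}_{ell}\langle a,b\rangle[n], \ml{E}_{ell}\langle c,d\rangle[m]) = H^{m-n}(Hom_{GL_2}(\mb{Q}, \ml{E}_{ell} \otimes \V^{\otimes -c}\otimes \V^{\otimes a}(-b+d))),
\]
and Lemma~\ref{fully-faithful} identifies this group with $Hom_{\DM_{gm}(k,\mb{Q})}(M_1(E)^{\otimes -a}(b), M_1(E)^{\otimes -c}(d)[m-n])$. It then remains to match the explicit isomorphism provided by Lemma~\ref{fully-faithful} with the map induced by $\ml{M}^c$. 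This is the step that requires care: one must trace through the definitions to check that the abstract isomorphism (assembled piece-by-piece from Corollary~\ref{cycle algebra GL_2 identification} and Voevodsky's cancellation theorem) coincides with the map constructed from the Adams pieces $\ml{M}(r)^{dg}$ and the bonding maps $\epsilon_r^*$. Concretely, Lemma~\ref{comp-Tate}(2) tells us the image of each generator under $\ml{M}^c$, and the naturality of the construction in $GL_2$-representations and in the cdga-module structure forces the induced map on Hom groups to agree with the one of Lemma~\ref{fully-faithful}; this traceback is the main obstacle, because one is comparing two a priori different identifications via Voevodsky's cancellation and the functorial $T^{tr}$-spectrum structure.

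With (1) and (2) in place, Lemma~\ref{cri for equivalence between tri-cat} yields that $\ml{M}^c$ is an equivalence of triangulated categories between $(\ml{D}^{GL_2}_{\ml{E}_{ell}})^c$ and $\DMEM(k,\mb{Q})_E$. Finally, the triangulated \emph{tensor} structure upgrade follows from Lemma~\ref{tensor-fun}, which says that the restriction of $\ml{M}$ to $(\KCM^{GL_2,f}_{\ml{E}_{ell}})^\natural \cong (\ml{D}^{GL_2}_{\ml{E}_{ell}})^c$ is a genuine (not merely lax) tensor functor: the lax structure maps $\rho_{M,N}$ of Lemma~\ref{laxtensor} are isomorphisms once both arguments are built from the generators $\ml{E}_{ell} \otimes \V^{\otimes a}(b)$, by the idempotent-tracking argument of that proof together with Lemma~\ref{fully-faithful}. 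This completes the equivalence as triangulated tensor categories.
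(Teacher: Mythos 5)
Your proposal is correct and follows essentially the same route as the paper: the paper's proof likewise combines Lemma \ref{comp-Tate} and Lemma \ref{tensor-fun} to identify the images of the generators and get the tensor structure, Lemma \ref{fully-faithful} for full faithfulness on the generators (with the same "one can check the isomorphism is induced by $\ml{M}$" traceback you flag), and then Corollary \ref{classically generates} together with the criterion of Lemma \ref{cri for equivalence between tri-cat} to conclude the equivalence with $\DMEM(k,\mb{Q})_E$. Your explicit verification that the images classically generate $\DMEM(k,\mb{Q})_E$ via Lemma \ref{Basic decom of elliptic motive} is left implicit in the paper but is exactly the intended argument.
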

\begin{proof}
By Lemma \ref{tensor-fun} and Lemma \ref{comp-Tate}, we know that the restriction of $\ml{M}$ to $(\ml{D}^{GL_2}_{\ml{E}_{ell}})^c$ is a tensor functor with $\ml{M}(\ml{E}_{ell} \otimes \V^{a}(b))  \cong M_1(E)^{\otimes a}(b)$.
\

From Lemma \ref{fully-faithful}, we have:
\begin{equation} \nonumber
\begin{split}
& Hom_{\KCM^{GL_2}_{\ml{E}_{ell}}}(\ml{E}_{ell} \otimes \V^{\otimes a}(b), \ml{E}_{ell}  \otimes \V^{\otimes c}(d)[i]) 
\cong  Hom_{\DM_{gm}(k, \mb{Q})}(M_{1}(E)^{\otimes a}(b), M_{1}(E)^{\otimes c}(d)[i]).
\end{split}
\end{equation}
One can check this isomorphim is induced by the functor $\ml{M}$. Using Lemma \ref{cri for equivalence between  tri-cat} and Corollary \ref{classically generates}, we obtain that $\ml{M}^{c}$ gives an equivalence between $(\ml{D}^{GL_2}_{\ml{E}_{ell}})^c$ and $\DMEM(k, \mb{Q})_E$.
\end{proof}
\begin{rmk} \label{CM case}
Assume that $E$ is an elliptic curve with CM. For $r \in \mb{Z}_{\geq 0}$ and $M \in \ml{CM}^{T_{\mb{K}}}_{\ml{E}^*_{ell}}$, we define its Adams graded $r$ summand is defined as:
$$M(r) = Hom_{T_{\mb{K}}}(det^{\otimes -r}, \mf{E}_{ell}^{*} \otimes_{\ml{E}_{ell}^{*}}  M[2r]).$$
Then we can repeat all the above constructions and get the same results. In particular, we have the following theorem.
\begin{thm} \label{equicm}
Given $E$ an elliptic curve with CM, then there is an exact functor
$$\ml{M}: \ml{D}^{T_{\mb{K}}}_{\ml{E}_{ell}} \to \DM(k, \mb{K}),$$
which is a lax tensor functor. Furthermore, the restriction of $\ml{M}$ to
$$\ml{M}^c: (\ml{D}^{T_{\mb{K}}}_{\ml{E}_{ell}})^c \to \DM(k, \mb{K})$$
defines an equivalence of $(\ml{D}^{T_{\mb{K}}}_{\ml{E}_{ell}})^c$ with $\DMEM(k, \mb{K})_E$ as triangulated tensor categories, where $(\ml{D}^{{T_{\mb{K}}}}_{\ml{E}_{ell}})^c$ is the full subcategory of $\ml{D}^{T_{\mb{K}}}_{\ml{E}_{ell}}$ consisting of compact objects.
\end{thm}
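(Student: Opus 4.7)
The plan is to mirror the proof of Theorem \ref{equi} step by step, replacing $GL_2$ by $\mb{T}_{\mb{K}}$, $\mb{Q}$-coefficients by $\mb{K}$-coefficients, and invoking the CM versions of the preparatory lemmas collected in Section \ref{sec-compcm} and in Remark \ref{important for ml}. First I would define the Adams graded $r$-th summand $M(r) = Hom_{\mb{T}_{\mb{K}}}(det^{\otimes -r}, \mf{E}_{ell}^{*} \otimes_{\ml{E}_{ell}^{*}} M[2r])$ for $M \in \CM^{\mb{T}_{\mb{K}}}_{\ml{E}_{ell}^{*}}$, together with bonding maps $\epsilon_r^*(M)$ constructed exactly as in the non-CM case from the inclusion $T^{tr} \hookrightarrow H^0(\mb{T}_{\mb{K}}, \mf{E}_{ell}^* \otimes det[2])$; here the cycle $\frac{1}{2}(\Gamma_\iota - \Gamma_{\iota \circ (-1)})$ of Example \ref{E21cm} plays the role that $\frac{1}{2}(\Delta^+ - \Delta^-)$ played before. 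Assembling these into a symmetric $T^{tr}$-spectrum produces an exact functor $\ml{M}: \ml{D}^{\mb{T}_{\mb{K}}}_{\ml{E}_{ell}} \to \DM(k, \mb{K})$.

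Next I would establish the CM analogue of Lemma \ref{comp-Tate}, computing $\ml{M}(r)(\ml{E}_{ell} \otimes \V^{\otimes a}(b)) \cong M_1(E)^{\otimes a}(b+r)[2r]$. The calculation uses Corollary \ref{cycle algebra T_K identification} (the $\mb{T}_{\mb{K}}$-version) in place of Corollary \ref{cycle algebra GL_2 identification}, together with the decomposition $\V = V \oplus \bar{V}$ and $M_1(E)_{\mb{K}} = M \oplus \bar{M}$ of Remark \ref{Basic decom of elliptic motiveII}, to identify the $det^{\otimes -r-?}$-isotypical pieces on each side. After that, the arguments of Lemma \ref{Vab generate} and Corollary \ref{classically generates} apply verbatim with $\mb{T}_{\mb{K}}$ in place of $GL_2$ (the only fact used is that each irreducible $\V^{a}(b)$ is compact in $\ml{D}^{\mb{T}_{\mb{K}}}_{\mb{K}}$), so that $\{\ml{E}_{ell} \otimes \V^{a}(b)\}_{a,b \in \mb{Z}}$ classically generates $(\ml{D}^{\mb{T}_{\mb{K}}}_{\ml{E}_{ell}})^c$.

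The lax tensor structure of Lemma \ref{laxtensor} is formal and transfers with no change, using the cdga structure of $\mf{E}_{ell}^*$ over $\mb{T}_{\mb{K}}$ from Proposition \ref{cdga object in DM}. To upgrade this to a genuine tensor functor on compact objects (the CM analogue of Lemma \ref{tensor-fun}), I would again reduce by induction on the weight filtration to pairs of generalized sphere modules, realize each as an idempotent of some $\ml{E}_{ell} \otimes \V^{a}(b)$, and then apply Lemma \ref{fully-faithful cm} to identify idempotents upstairs with idempotents of $M_1(E)^{\otimes a}(b)$ in $\DM_{gm}(k, \mb{K})$; the structural map $\rho_{M,N}$ then becomes an isomorphism as a tensor product map between these motives.

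Finally, combining the fully-faithfulness on generators provided by Lemma \ref{fully-faithful cm} with the classical generation above, Lemma \ref{cri for equivalence between tri-cat} applied to $\mf{S} = \{\ml{E}_{ell} \otimes \V^{a}(b)[n]\}$ yields that $\ml{M}^c$ is an equivalence between $(\ml{D}^{\mb{T}_{\mb{K}}}_{\ml{E}_{ell}})^c$ and $\DMEM(k, \mb{K})_E$. The main subtlety I anticipate is the bookkeeping of the two characters of $\mb{T}_{\mb{K}}$: because $\V$ is no longer absolutely irreducible, every computation of isotypical components in $\ml{M}(r)(\ml{E}_{ell} \otimes \V^{a}(b))$ involves a finer decomposition into $V^{e} \otimes \bar{V}^{f}$ pieces indexed by partitions $e+f = |a|$, and one must check that the matching pieces $\Hom(M^{e} \otimes \bar{M}^{f}, \mb{K}(\ast)[\ast])$ land in the correct summand on the motivic side. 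This is precisely the content encoded in Remark \ref{Basic decom of elliptic motiveII} and Ancona's Theorem cited there, so once the indexing is carried out carefully the argument goes through as in the non-CM case.
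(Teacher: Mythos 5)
Your proposal is correct and follows essentially the same route as the paper: the paper proves Theorem \ref{equicm} inside Remark \ref{CM case} simply by declaring that all the constructions and arguments of Section \ref{DG modules and motives for an elliptic curve} carry over with $GL_2$ replaced by $T_{\mb{K}}$ and the non-CM inputs replaced by their CM counterparts (Lemma \ref{fully-faithful cm}, Corollary \ref{cycle algebra T_K identification}, Example \ref{E21cm}, Remark \ref{Basic decom of elliptic motiveII}), which is exactly the substitution you carry out, including the correct attention to the splitting $\V = V \oplus \bar{V}$.
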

\end{rmk}

\begin{conj} \label{BS vanishing conjecture}
(\textbf{The generalized Beilison-Soul\'{e} vanishing conjecture for an elliptic curve}) 
\begin{enumerate}
\item
An elliptic curve $E$ over a field $k$ without CM satisfies the conditions:
$$Hom_{\DM_{gm}(k, \mb{Q})}(M_1(E)^{\otimes a}, \mb{Q}(a-b)[m]) = 0$$
in the following two cases:
\begin{itemize}
\item[A.]
$a = 0, b < 0, m \leq 0$;
\item[B.]
$a > 0, a \geq 2b, m \leq 0$.
\end{itemize}
\item
An elliptic curve $E$ over a field $k$ with CM, whose $1$-motive $M_1(E) = M \oplus \bar{M}$, satisfies the conditions:
$$Hom_{\DM_{gm}(k, \mb{K})}(M^{\otimes a}, \mb{K}(a-b)[m]) = 0$$
and
$$Hom_{\DM_{gm}(k, \mb{K})}(\bar{M}^{\otimes a}, \mb{K}(a-b)[m]) = 0$$
in the following two cases:
\begin{itemize}
\item[A.]
$a = 0, b < 0, m \leq 0$;
\item[B.]
$a > 0, a \geq 2b, m \leq 0$.
\end{itemize}
\end{enumerate}

\end{conj}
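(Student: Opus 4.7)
The plan is to reduce the conjecture, case by case, to the classical Beilinson-Soulé vanishing for the base field $k$. Case A is literally the statement $Hom_{\DM_{gm}(k,\mb{Q})}(\mb{Q},\mb{Q}(-b)[m])=0$ for $-b>0$, $m\leq 0$, which \emph{is} the BS conjecture for $k$; assuming BS for $k$ (known for number fields by Borel), this case is immediate. All of the real content is in case B, which I would attack via the Schur decomposition of Lemma \ref{Basic decom of elliptic motive}:
\[
Hom_{\DM_{gm}(k,\mb{Q})}(M_1(E)^{\otimes a},\mb{Q}(a-b)[m]) \;\cong\; \bigoplus_{c+2d=a,\,c,d\geq 0} V_{(c+d,d)}\otimes Hom(Sym^c M_1(E),\mb{Q}(a-b-d)[m]).
\]
Writing $e := a-b-d$ for the twist of each Schur summand, the arithmetic constraint $a\geq 2b$ combined with $c+2d=a$ yields $e = c/2+(a/2-b) \geq c/2 \geq 0$, strictly positive unless $c=0$ and $a=2b$. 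So each Schur piece lies in a BS-positive range of twists.

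The reduction from each Schur piece to motivic cohomology of a power of $E$ uses that $Sym^c M_1(E)$ is a direct summand of $M(E^c)[-c]$ (via the appropriate Young symmetrizer applied to the middle cohomology), so each $Hom(Sym^c M_1(E),\mb{Q}(e)[m])$ embeds into a direct summand of $H^{m+c}(E^c,\mb{Q}(e))$. The task is therefore reduced to a motivic cohomology vanishing on $E^c$ in bidegree $(m+c,e)$ with $e\geq c/2\geq 0$ and $m\leq 0$, restricted to the Schur-primitive component. The route I would follow is an induction on $c$ using Künneth. Iterating the motivic decomposition $M(E)=\mb{Q}\oplus M_1(E)[1]\oplus\mb{Q}(1)[2]$ breaks $M(E^c)$ into summands of the shape $M_1(E)^{\otimes c'}(s)[t]$ with $c'\leq c$, $0\leq s\leq (c-c')/2$ and $t\geq 0$. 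The $c'=0$ contributions reduce to BS for $k$ at weight $e-s\geq c'/2=0$, while the $c'\geq 1$ contributions correspond to lower-degree instances of the conjecture itself, and the inequality $e-s\geq c'/2$ is preserved throughout, so an induction on the lexicographic invariant $(a,\,a-2b)$ closes the loop.

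The main obstacle is twofold. First and foremost, the BS conjecture for $k$ is itself open for general fields, so the best one can hope for is the conditional implication ``BS for $k$ $\Longrightarrow$ generalized BS for $E$.'' Second, the inductive Künneth argument is genuinely delicate, because the Künneth decomposition of $M(E^c)$ re-introduces lower-degree $Sym^{c'}M_1(E)$-pieces and the argument is almost circular; making the induction rigorous requires carefully choosing an invariant that strictly decreases at each step, and handling the boundary case $c=0,\,a=2b$ separately (where the $Hom(\mb{Q},\mb{Q}[m])$ summand indicates that the strict range $a>2b$ or $m<0$ is the natural domain of vanishing). The CM case runs in parallel by substituting Ancona's $\mb{K}$-linear refinement of Remark \ref{Basic decom of elliptic motiveII}, splitting each Schur summand into pieces $M^{\otimes i}\otimes\bar{M}^{\otimes j}$ and replacing Künneth with its $\mb{K}$-coefficient analogue; the numerical inequalities translate verbatim after keeping track of the extra $(i,j)$-grading, and BS for $k$ with $\mb{K}$-coefficients is the appropriate base case.
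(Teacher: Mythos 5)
The statement you are trying to prove is stated in the paper as a \emph{conjecture} (the generalized Beilinson--Soul\'e vanishing conjecture); the paper offers no proof of it, and none is currently known. Your proposal does not close this gap, and the route you sketch cannot: as the paper itself observes in Remark \ref{relation with the B-S}, the conjecture is equivalent to the strong Beilinson--Soul\'e vanishing $BS^{*}_{E^n}$ for all powers $E^n$, which is strictly stronger than (and does not follow from) Beilinson--Soul\'e vanishing for the base field $k$. Concretely, after your Schur decomposition the summands $Hom_{\DM_{gm}(k,\mb{Q})}(Sym^{c}M_1(E),\mb{Q}(e)[m])$ with $c\geq 1$ are genuinely new groups: they are pieces of the motivic cohomology of $E^{c}$ that have no expression in terms of motivic cohomology of $Spec(k)$. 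Your K\"unneth induction is therefore circular rather than ``almost circular'': expanding $M(E^{c})$ via $M(E)=\mb{Q}\oplus M_1(E)[1]\oplus\mb{Q}(1)[2]$ produces, as its top summand, $M_1(E)^{\otimes c}[c]$ and hence $Sym^{c}M_1(E)$ again with the same twist and the same value of your invariant $(a,a-2b)$, so the induction never terminates and no base case is ever reached except the $c'=0$ pieces, which are exactly the part already covered by case A. Even the conditional implication ``BS for $k$ implies the generalized conjecture'' is not established by your argument, and there is no reason to expect it to be true.

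Two smaller points. First, your observation about the boundary is correct and worth noting: for $a=2b>0$, $m=0$ the summand with $c=0$, $d=b$ contributes $Hom_{\DM_{gm}(k,\mb{Q})}(\mb{Q}(b),\mb{Q}(b))=\mb{Q}\neq 0$, so the vanishing can only be meant away from that boundary term; but this is a remark on the formulation, not a step toward a proof. Second, the correct way to engage with this statement in the context of the paper is as a hypothesis: it is used (together with Conjecture \ref{main c}) as an input to Corollary \ref{abelian category of elliptic motives}, where cohomological connectedness of $\ml{E}_{ell}^{*}$ is deduced from it via the computations of Section \ref{sec-comp}, not the other way around. If you want a provable statement in this circle, the honest targets are unconditional special cases such as Example \ref{ex for van} (weight one, via $Pic$, units, and $End(E)\otimes\mb{Q}$), not the conjecture itself.
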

\begin{rmk} \label{relation with the B-S}
In fact, Part (A) of Conjecture \ref{BS vanishing conjecture} is the classical Beilison-Soul\'{e} vanishing conjecture. See \cite{L2} for example. In fact, all of these generalized conjectures can be expressed as follows. The strong Beilison-Soul\'{e} vanishing conjecture for $X$:
\

($BS^{*}_X$) For any smooth $k$-scheme $X$, $H^n(X, \mb{Q}(i)) = 0$ provided $n \leq 0$ and $i > 0$.
\

\noindent When $X$ is a field, the conjectures $BS^*_{X}$ is called the strong Beilison-Soul\'{e} vanishing conjecture in \cite{L}. Conjecture \ref{BS vanishing conjecture} is the same as $BS^*_{E^n}$ for $n \in \mb{Z}_{\geq 0}$.
\end{rmk}

\begin{cor} \label{abelian category of elliptic motives}
Assume that $E$ is an elliptic curve without CM, satisfies the $r$-th vanishing properties for $r \geq 0$ and the generalized Beilison-Soul\'{e} vanishing conjecture, then:
\begin{itemize}
\item[1.]
$\DMEM(k, \mb{Q})_E$ has a t-structure which is induced from $$\ml{M}^{f}: \ml{D}^{GL_2, f}_{\ml{E}} \to \DMEM(k, \mb{Q})_{E},$$
where $\ml{M}^{f}$ is the restriction of the functor $\ml{M}$ (Theorem \ref{equi}) to $\ml{D}^{GL_2, f}_{\ml{E}}$. Denote its heart by $\MEM(k, \mb{Q})_E$.
\item[2.]
$\ml{M}^{f}$ induces an equivalence of Tannakian categories:
$$H^0(\ml{M}^{f}): \ml{H}^{GL_2, f}_{\ml{E}} \to \MEM(k, \mb{Q})_E.$$
\end{itemize}
\end{cor}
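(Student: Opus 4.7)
The plan is to reduce this to the machinery developed in \cite{C}, specifically Theorem 8.3 there, whose input requires only that the cdga $\ml{E}^{*}_{ell}$ over $GL_2$ be cohomologically connected, meaning $H^n(\ml{E}^{*}_{ell}) = 0$ for $n < 0$ and $H^0(\ml{E}^{*}_{ell}) = \mb{Q}$ (as a trivial $GL_2$-representation concentrated in Adams weight $0$). Once that input is verified, Theorem 8.3 of \cite{C} directly supplies a t-structure on $\ml{D}^{GL_2,f}_{\ml{E}_{ell}}$ whose heart $\ml{H}^{GL_2,f}_{\ml{E}_{ell}}$ is a neutral Tannakian category, and one transports this t-structure to $\DMEM(k, \mb{Q})_E$ through the equivalence of Theorem \ref{equi}.

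The first step is the cohomological connectedness check. By Corollary \ref{cycle algebra GL_2 identification}, for each $a \in \mb{Z}$ we have
\[
H^n(\ml{E}^{*}_a) \cong \bigoplus_{i \geq 0,\, a \equiv i\,(2)} \mathrm{Hom}_{\DM_{gm}(k,\mb{Q})}\!\bigl(Sym^i M_1(E), \mb{Q}(\tfrac{a+i}{2})[n]\bigr) \otimes Sym^i \V(-\tfrac{a+i}{2}).
\]
For $a < 0$, Corollary \ref{r-th vanishing} combined with the assumed $r$-th vanishing properties for all $r \geq 0$ gives $H^*(\ml{E}^{*}_a) = 0$; moreover the same corollary identifies $H^*(\ml{E}^{*}) = H^*(\ml{E}^{*}_{ell})$, so only $a \geq 0$ matters. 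For $a \geq 0$ and $n \leq 0$ (with $(a,n) \neq (0,0)$), each summand is a direct summand of $\mathrm{Hom}_{\DM_{gm}(k,\mb{Q})}(M_1(E)^{\otimes i}, \mb{Q}(\tfrac{a+i}{2})[n])$; setting $a' = i$ and $b' = \tfrac{i-a}{2}$ one checks $a' \geq 2b'$ precisely because $a \geq 0$, so Part (B) of Conjecture \ref{BS vanishing conjecture} (when $i > 0$) and Part (A) (when $i = 0$ and $a > 0$) kill these summands. The remaining piece, $a = 0$, $n = 0$, $i = 0$, contributes $\mathrm{Hom}_{\DM_{gm}(k,\mb{Q})}(\mb{Q},\mb{Q}) = \mb{Q}$, matching the unit. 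This establishes cohomological connectedness.

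The second step is to invoke Theorem 8.3 of \cite{C}, which for any cohomologically connected cdga over $GL_2$ produces a t-structure on $\ml{D}^{GL_2,f}_{\ml{E}_{ell}}$ whose heart $\ml{H}^{GL_2,f}_{\ml{E}_{ell}}$ is a neutral Tannakian category (with fiber functor induced by the forgetful functor to $GL_2$-representations and augmentation to $H^0 = \mb{Q}$). The third step is to transport this t-structure along the triangulated tensor equivalence $\ml{M}^c : (\ml{D}^{GL_2}_{\ml{E}_{ell}})^c \xrightarrow{\sim} \DMEM(k,\mb{Q})_E$ from Theorem \ref{equi}; since $\ml{D}^{GL_2,f}_{\ml{E}_{ell}}$ is identified with $(\ml{D}^{GL_2}_{\ml{E}_{ell}})^c$ (as noted in the remark following Corollary \ref{classically generates}), the image is exactly $\DMEM(k,\mb{Q})_E$, and the t-structure transfers to give part (1) with $\MEM(k,\mb{Q})_E$ as its heart. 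Part (2) is then formal: the restriction $H^0(\ml{M}^f)$ of the equivalence $\ml{M}^f$ to the hearts is a tensor equivalence of abelian categories, and since the source is Tannakian (from Theorem 8.3 of \cite{C}) the target inherits the Tannakian structure.

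The main potential obstacle is a bookkeeping one: one must confirm that the Adams/weight grading and commutativity conventions under which Theorem 8.3 of \cite{C} is stated match exactly those of $\ml{E}^{*}_{ell}$ constructed in Section \ref{elliptic cycle algebra}, and in particular that the category $\ml{H}^{GL_2,f}_{\ml{E}}$ appearing in the statement is the heart produced by that theorem. Assuming these conventions align (as the setup of the present paper was designed to ensure in \cite{C}), no further work beyond the vanishing verification above is required.
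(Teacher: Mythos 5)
Your proposal is correct and follows essentially the same route as the paper: verify that the vanishing hypotheses make $\ml{E}_{ell} \cong \ml{E}$ cohomologically connected as a cdga over $GL_2$, apply the t-structure/Tannakian-heart theorem of \cite{C} (cited as Theorem 8.4 there in the paper's proof, Theorem 8.3 in its introduction), and transport the t-structure through the equivalence of Theorem \ref{equi}. Your explicit connectedness check via Corollary \ref{cycle algebra GL_2 identification}, Corollary \ref{r-th vanishing} and Conjecture \ref{BS vanishing conjecture} simply spells out what the paper compresses into ``it follows from our assumptions.''
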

\begin{proof}
First, it follows from our assumptions and Theorem \ref{equi} that $\ml{E}_{ell} \cong \ml{E}$ is a cohomologically connected cdga over $GL_2$. Then by Theorem 8.4 in \cite{C}, we have a $t$-structure on $\ml{D}^{GL_2, f}_{\ml{E}}$. Therefore the equivalence of Theorem \ref{equi} gives us an induced $t$-structure on $\DMEM(k, \mb{Q})_{E}$, which satisfies the desired properties. 
\end{proof}
\begin{cor} \label{abelian category of elliptic motives cm}
Assume that $E$ is an elliptic curve with CM, satisfies the $r$-th vanishing properties for $r \geq 0$ and the generalized Beilison-Soul\'{e} vanishing conjecture, then:
\begin{itemize}
\item[1.]
$\DMEM(k, \mb{K})_E$ has a t-structure which is induced from $$\ml{M}^{f}: \ml{D}^{T_{\mb{K}}, f}_{\ml{E}} \to \DMEM(k, \mb{K})_{E},$$
where $\ml{M}^{f}$ is the restriction of the functor $\ml{M}$ (Theorem \ref{equi}) to $\ml{D}^{T_{\mb{K}}, f}_{\ml{E}}$. Denote its heart by $\MEM(k, \mb{K})_E$.
\item[2.]
$\ml{M}^{f}$ induces an equivalence of Tannakian categories:
$$H^0(\ml{M}^{f}): \ml{H}^{T_{\mb{K}}, f}_{\ml{E}} \to \MEM(k, \mb{K})_E.$$
\end{itemize}
\end{cor}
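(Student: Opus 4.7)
The plan is to mirror the proof of Corollary \ref{abelian category of elliptic motives}, replacing $GL_2$ with $\mb{T}_{\mb{K}}$ and $\DM_{gm}(k,\mb{Q})$ with $\DM_{gm}(k,\mb{K})$ throughout. The argument splits into three logical stages: first verify that $\ml{E}_{ell}$ is cohomologically connected in the sense of \cite{C}; second, invoke Theorem 8.4 of \cite{C} to obtain a $t$-structure on $\ml{D}^{\mb{T}_{\mb{K}}, f}_{\ml{E}}$; finally, transport this structure along the equivalence of Theorem \ref{equicm} and identify the heart with a Tannakian category of elliptic motives.

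For the first stage, I would combine Corollary \ref{r-th vanishing cm} with the CM version of the generalized Beilinson-Soul\'e vanishing conjecture. The $r$-th vanishing for all $r \geq 0$ kills $H^*(\ml{E}_a^*)$ for negative $a$ and forces $H^*(\ml{E}^*) = H^*(\ml{E}_{ell}^*)$; the generalized Beilinson-Soul\'e vanishing (as formulated in Conjecture \ref{BS vanishing conjecture}, rephrased via Corollary \ref{cycle algebra T_K identification}) ensures that for each Adams weight $a > 0$, the complex $\ml{E}_a^*$ is cohomologically concentrated in nonpositive degrees and $H^0(\ml{E}_0^*) = \mb{K}$. Together these say that $\ml{E}_{ell}$ is a cohomologically connected cdga over $\mb{T}_{\mb{K}}$, exactly the hypothesis required by Theorem 8.4 of \cite{C}.

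The second stage is then a direct application of Theorem 8.4 of \cite{C}, which furnishes a $t$-structure on $\ml{D}^{\mb{T}_{\mb{K}}, f}_{\ml{E}}$ whose heart $\ml{H}^{\mb{T}_{\mb{K}}, f}_{\ml{E}}$ is a neutral Tannakian category over $\mb{K}$. For the third stage, the equivalence $\ml{M}^c$ of Theorem \ref{equicm} is a triangulated tensor equivalence between $(\ml{D}^{\mb{T}_{\mb{K}}}_{\ml{E}_{ell}})^c$ and $\DMEM(k,\mb{K})_E$; restricting to the finite cell subcategory identified with $(\ml{D}^{\mb{T}_{\mb{K}}, f}_{\ml{E}})^{\natural}$ (as in the remark following Corollary \ref{classically generates}), one transports the $t$-structure across, and the heart maps to an abelian tensor subcategory $\MEM(k,\mb{K})_E$ of $\DMEM(k,\mb{K})_E$. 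Since $\ml{M}^f$ is an exact tensor functor that is an equivalence of triangulated categories, taking $H^0$ with respect to the two compatible $t$-structures yields the claimed equivalence of Tannakian categories.

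The main (essentially only) technical obstacle is confirming that cohomological connectivity in the CM setting really does follow from the stated hypotheses; the subtlety is that in the CM case the standard representation decomposes as $\V = V \oplus \bar{V}$, so one must track both the $M$-isotypical and $\bar{M}$-isotypical summands appearing in Corollary \ref{cycle algebra T_K identification} and check that the generalized Beilinson-Soul\'e vanishing for $M^{\otimes a}$ \emph{and} $\bar{M}^{\otimes a}$ (both branches of Conjecture \ref{BS vanishing conjecture}) exactly matches what is needed to kill the negative cohomology in each Adams weight. Once this bookkeeping is in place, the rest of the proof is a direct translation of the non-CM argument and requires no new ideas beyond those already developed for Corollary \ref{abelian category of elliptic motives}.
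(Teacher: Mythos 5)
Your proposal is correct and follows exactly the route the paper takes: the paper proves the non-CM corollary by noting that the vanishing hypotheses make $\ml{E}_{ell} \cong \ml{E}$ cohomologically connected, invoking Theorem 8.4 of \cite{C} for the $t$-structure on the finite cell module category, and transporting it through the equivalence of Theorem \ref{equi}, and it leaves the CM case as the evident $T_{\mb{K}}$-translation via Theorem \ref{equicm}, which is precisely what you carry out. Your extra bookkeeping of the $M$- and $\bar{M}$-isotypical pieces via Corollary \ref{cycle algebra T_K identification} is exactly the check the paper leaves implicit.
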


\section{Relation with mixed Tate motives} \label{relation with MT}
In this section, we put the constructions of the Adams cycle algebra for mixed Tate motives into our setting. As before, we only work out the case of elliptic curves without CM in detail. In the CM case, the construction is similar. Firstly we recall the definitions in Chapter 4 of \cite{L} .
\

\begin{df}
We let $\Ztr((\mb{P}^1/\infty)^q))$ be defined by the cokernel of the map:
\[
\oplus_{j = 1}^r \Ztr((\mb{P}^1)^{q - 1})) \xrightarrow{\sum_j i_{j, \infty *}} \Ztr((\mb{P}^1)^q))
\]
where $i_{j, \infty}: (\mb{P}^1)^{q-1} \to (\mb{P}^1)^q$ inserts $\infty$ in the $j$-th place.
\end{df}
\begin{df}
The Adams cycle algebra for mixed Tate motives is defined by:
\begin{equation} \nonumber
\ml{N} = \mb{Q} \oplus \bigoplus_{q \geq 1}\ml{N}(q), 
\end{equation}
where $\ml{N}(q) \subset C^{Alt}_{*}(\Ztr((\mb{P}^1/\infty)^q))$ be the subsheaf of symmetric sections with respect to the action of symmetric group $\Sigma_q$ by permuting the coordinates in $(\mb{P}^1)^q$.
\end{df}
\begin{rmk}
One can show that the homotopy category of finite cell $\ml{N}$-modules can be identified as the triangulated category of mixed Tate motives $\mathbf{DMT}(k, \mb{Q})$, which is a full rigid tensor subcategory of $\DM_{gm}(k, \mb{Q})$ generated by Tate objects. The proof can be found in Section 5.3 in \cite{L}. In fact, one of the main results in \cite{L} is to show this equivalence can be generalized to mixed Tate motives over a base scheme that is separated, smooth and essentially of finite type over a field. Along with the strategy in \cite{L}, we also want to generalize our results into mixed elliptic motives over a general base scheme in the future.
\end{rmk}

\begin{df}
We define the modified Adams cycle algebra for mixed Tate motives by:
\begin{equation} \nonumber
\widehat{\ml{N}} = \mb{Q} \oplus \bigoplus_{t \geq 1, t \in \mb{Z}}\widehat{\ml{N}}_{2t}, 
\end{equation}
where $\widehat{\ml{N}}_{2t} = \ml{N}(t) \otimes det^{\otimes -t}$.
\end{df}
\begin{rmk} \label{N is sub-alg of E}
By Definition \ref{E^{a,b}}, we know that: 
$\ml{E}^*_{0, b} = \ml{N}(-b) \otimes det^{\otimes b}$
for any $b \in \mb{Z}_{\leq 0}$. This implies that $\widehat{\ml{N}}_{2t} \subset \ml{E}_{2t}$. Using the algebra structure of $\ml{N}$ (Section 4.2 in \cite{L}) and the tensor structure of determinant representations (viewed as $GL_2$ representations), we know that $\widehat{\ml{N}}$ is sub-algebra of $\ml{E}_{ell}$ as a cdga over $GL_2$.
\end{rmk}
\begin{rmk}
Notice that our Adams grading is different from Adams grading defined in \cite{L}. More precisely, Adams degree $r$ in the sense of \cite{L} is Adams degree $2r$ in our sense.
%There is a natural isomorphism between the category of Adams graded dg $\ml{N}$ module and Adams graded dg $\widehat{\ml{N}}$ module. 
\end{rmk}
We define $\ml{CM}^{\mb{G}_m}_{A}$ to be the category of cell modules of Tate-type for a cdga $A$ over $GL_2$, i.e. cell modules consisting only by the generalized sphere modules of the form $A[-n] \otimes det^{\otimes r}$, which is a full subcategory of $\ml{CM}^{GL_2}_{A}$.
\begin{rmk}  \label{Psi_1}
There is a natural functor: $$\Psi_1: \ml{CM}_{\ml{N}} \to \ml{CM}^{\mb{G}_m}_{\widehat{\ml{N}}},$$
which sends the cell module $\ml{N}\langle n \rangle$, defined in Example 1.4.5 of \cite{L}, to the cell module $\widehat{\ml{N}} \otimes det^{\otimes n}$. $\Psi_1$ induces a functor between their associated homotopy categories, even homotopy categories of finite cell modules. For simplicity, we denote both of these functors by $\Psi_1$. In particular, we have:
$$\Psi_1: \ml{D}^{f}_{\ml{N}} \to \ml{D}^{\mb{G}_m, f}_{\widehat{\ml{N}}}.$$
\end{rmk}
Notice that the inclusion: $\ml{CM}^{\mb{G}_m}_{\widehat{\ml{N}}} \to \ml{CM}^{GL_2}_{\widehat{\ml{N}}}$ induces a functor $$\Psi_2: \ml{D}^{\mb{G}_m}_{\widehat{\ml{N}}} \to \ml{D}^{GL_2}_{\widehat{\ml{N}}}.$$
Similarly, on the level of homotopy category of finite cell modules, we have:
 $$\Psi_2: \ml{D}^{\mb{G}_m, f}_{\widehat{\ml{N}}} \to \ml{D}^{GL_2, f}_{\widehat{\ml{N}}}.$$
\begin{rmk}
Because $\widehat{\ml{N}}$ is Adams connected, we have:
$$\ml{D}^{GL_2, f}_{\widehat{\ml{N}}} \cong (\ml{D}^{GL_2}_{\widehat{\ml{N}}})^c.$$ 
\end{rmk}
Using Remark \ref{N is sub-alg of E}, we have a map between cdgas over $GL_2$:
$\widehat{N} \xrightarrow{i} \ml{E}_{ell}.$
This induces a functor:
$$\Psi_3: \ml{CM}^{GL_2}_{\widehat{\ml{N}}} \to \ml{CM}^{GL_2}_{ \ml{E}_{ell}},$$
which sends $M$ to $M \otimes_{\widehat{\ml{N}}} \ml{E}_{ell}$. Furthermore, we have:
$$\Psi_3: \ml{D}^{GL_2}_{\widehat{\ml{N}}} \to \ml{D}^{GL_2}_{ \ml{E}_{ell}}$$
and
$$\Psi_3: (\ml{D}^{GL_2}_{\widehat{\ml{N}}})^c \to (\ml{D}^{GL_2}_{ \ml{E}_{ell}})^c.$$
From our constructions of $\Psi_i, i = 1, 2, 3$, we have the following statement.
\begin{prop}
We have the following commutative diagram:
\begin{center}
\begin{xy}
(180,0)*+{\ml{D}^{f}_{\ml{N}}}="v1";
(200,0)*+{\ml{D}^{\mb{G}_m, f}_{\widehat{\ml{N}}}}="v2";
(220,0)*+{\ml{D}^{GL_2, f}_{\widehat{\ml{N}}}}="v3";(240,0)*+{(\ml{D}^{GL_2}_{\widehat{\ml{N}}})^c}="v4";
(270,0)*+{(\ml{D}^{GL_2}_{ \ml{E}_{ell}})^c}="v5";
(180,-20)*+{\mathbf{DMT}(k, \mb{Q})}="v6";
(270,-20)*+{\DMEM(k, \mb{Q})_E}="v7";
{\ar@{->}^{\Psi_1} "v1";"v2"};{\ar@{->}^{\Psi_2} "v2";"v3"};
{\ar@{->}^{\cong} "v3";"v4"};{\ar@{->}^{\Psi_3} "v4";"v5"};
{\ar@{->}^{\ml{M}} "v1";"v6"};{\ar@{->}^{\ml{M}^c} "v5";"v7"};
{\ar@{->} "v6";"v7"};
\end{xy}
\end{center}
\noindent where the left vertical map $\ml{M}$ is defined in Section 5.3 of \cite{L} and the right vertical map $\ml{M}^c$ is defined in Section \ref{DG modules and motives for an elliptic curve}. In particular, the composition of top arrows is fully faithful.
\end{prop}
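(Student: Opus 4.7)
The plan is to verify commutativity of the diagram square-by-square and then deduce full faithfulness of the top composition from the equivalences established by Levine and by Theorem \ref{equi}.

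First I would check commutativity on the generating objects. The category $\ml{D}^{f}_{\ml{N}}$ is classically generated by the cell modules $\ml{N}\langle n\rangle$ for $n\in\mb{Z}$. Chasing the top row: $\Psi_1$ sends $\ml{N}\langle n\rangle$ to the $GL_2$-cell module $\widehat{\ml{N}}\otimes det^{\otimes n}$, $\Psi_2$ is the inclusion of Tate-type $\widehat{\ml{N}}$-modules, the identification $\ml{D}^{GL_2,f}_{\widehat{\ml{N}}}\cong(\ml{D}^{GL_2}_{\widehat{\ml{N}}})^c$ is the identity on such modules, and $\Psi_3=(-)\otimes_{\widehat{\ml{N}}}\ml{E}_{ell}$ sends this to $\ml{E}_{ell}\otimes det^{\otimes n}$. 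Applying $\ml{M}^c$, Lemma \ref{comp-Tate}(2) with $a=0$ yields the Tate object $\mb{Q}(n)[2n]$ in $\DMEM(k,\mb{Q})_E$. The left column sends $\ml{N}\langle n\rangle$ to the Tate object $\mb{Q}(n)[2n]$ in $\mathbf{DMT}(k,\mb{Q})$, which the bottom inclusion carries to the same object in $\DMEM(k,\mb{Q})_E$. So the two routes agree on the generating objects.

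Next, commutativity on morphisms reduces to checking compatibility of the bonding-map constructions. Both $\ml{M}$ for $\ml{N}$-modules and $\ml{M}^c$ for $\ml{E}_{ell}$-modules are built from the same recipe: take the $det^{\otimes -r}$-isotypic piece of the tensor with the motivic cdga and connect successive terms by multiplication with the $T^{tr}$-class. By Remark \ref{N is sub-alg of E}, $\widehat{\ml{N}}\hookrightarrow\ml{E}_{ell}$ is a map of cdgas over $GL_2$, so the multiplicative structure and hence the bonding maps for Tate-type $\widehat{\ml{N}}$-modules transport directly along $\Psi_3$ to those for $\ml{E}_{ell}$-modules. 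Triangulated closure from the generating objects then upgrades this identification from the generators to all morphisms, completing the commutativity check.

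With commutativity in hand, the full faithfulness claim is formal. Levine's main result identifies $\ml{M}\colon\ml{D}^{f}_{\ml{N}}\xrightarrow{\sim}\mathbf{DMT}(k,\mb{Q})$ as an equivalence, and Theorem \ref{equi} identifies $\ml{M}^c\colon(\ml{D}^{GL_2}_{\ml{E}_{ell}})^c\xrightarrow{\sim}\DMEM(k,\mb{Q})_E$ as an equivalence. The bottom map is the inclusion of a full rigid tensor triangulated subcategory, hence fully faithful. Commutativity of the outer rectangle then presents the top composition as the conjugate of this fully faithful embedding by two equivalences, so it is itself fully faithful. The main subtlety I anticipate lies in the morphism-level step of the second paragraph: one must reconcile the two normalizations of Adams grading (Levine's degree $r$ corresponds to our degree $2r$) so that the $r$-th Tate spectrum component on the left matches the appropriate Adams summand used in Lemma \ref{comp-Tate}, but once this bookkeeping is pinned down the natural inclusion $\widehat{\ml{N}}\hookrightarrow\ml{E}_{ell}$ directly furnishes the required compatible map of symmetric $T^{tr}$-spectra.
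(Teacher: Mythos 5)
Your proposal is correct and follows essentially the route the paper intends: the paper offers no written argument beyond ``from our constructions of $\Psi_i$'', and your verification (commutativity via the cdga inclusion $\widehat{\ml{N}}\hookrightarrow\ml{E}_{ell}$ inducing a compatible map of $T^{tr}$-spectra, checked on the Tate cell generators, plus full faithfulness from Levine's equivalence, Theorem \ref{equi} and the full embedding $\mathbf{DMT}(k,\mb{Q})\subset\DMEM(k,\mb{Q})_E$) is exactly the omitted routine content. Only minor bookkeeping quibbles: the spectrum $\ml{M}_*(\ml{E}_{ell}\otimes det^{\otimes n})$ represents $\mb{Q}(n)$ rather than $\mb{Q}(n)[2n]$ (Lemma \ref{comp-Tate} computes the individual levels $\mb{Q}(n+r)[2r]$), and the comparison of bonding maps should be phrased as a natural transformation built from the sheaf-level inclusion $\mf{E}^{*}_{0,b}\subset\mf{E}_{ell}$ (the sheaf analogue of Remark \ref{N is sub-alg of E}) which is then an isomorphism on generators, rather than an agreement ``upgraded by triangulated closure''.
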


\bigskip

%%========================================================%%%
%%%        Affiliation
%%========================================================%%%
Jin Cao, Yau Mathematical Sciences Center, Tsinghua University, Beijing, China,
\

\textit{E-mail address}: jcao@math.tsinghua.edu.cn

\end{document}